\newtheorem{thm}{Theorem}[section]
\newtheorem{lemma}[thm]{Lemma}
\newtheorem{prop}[thm]{Proposition}
\newtheorem{defn}[thm]{Definition}
\newtheorem{claim}[thm]{Claim}
\newtheorem{coro}[thm]{Corollary}
\newtheorem*{thm:3.6.2}{Theorem \ref{thm:3.6.2}}
\newtheorem*{prop:nonconstantweights}{Theorem \ref{prop:nonconstantweights}}
\newcommand{\hcap}{\text{hcap}}
\author{Andrew Starnes}
\title{The Loewner Equation for Multiple Hulls}
\begin{document}

\maketitle

\begin{abstract}
\noindent Kager, Nienhuis, and Kadanoff conjectured that the hull generated from the Loewner equation driven by two constant functions with constant weights could be generated by a single rapidly and randomly oscillating function. We prove their conjecture and generalize to multiple continuous driving functions. In the process, we generalize to multiple hulls a result of Roth and Schleissinger that says multiple slits can be generated by constant weight functions. The proof gives a simulation method for hulls generated by the multiple Loewner equation. \footnote{2010 Mathematics Subject Classification: Primary 30C35} \footnote{Keywords: Loewner}
\end{abstract}

\section{Introduction}

The Loewner equation is the initial value problem
\begin{equation}\label{eqn:LEintro}
    \frac{\partial}{\partial t} g_t(z)
    =\frac{2}{g_t(z)-\lambda(t)},
    \quad g_0(z)=z.
\end{equation}
where $\lambda:[0,T]\to\mathbb{R}$ is called the driving function. For $z\in\mathbb{H}$, a solution exists up to a maximum time, call it $T_z$. The collection of points
\begin{equation}
    K_t=\left\{z\in\mathbb{H}:T_z\leq t\right\}
\end{equation}
is called a hull. A fundamental note is that there is a one-to-one correspondence between hulls and driving functions. The map $g_t$ in (\ref{eqn:LEintro}) is a conformal map from $\mathbb{H}\setminus K_t$ to $\mathbb{H}$ (see Section \ref{loewnerequation} for more details).
The Loewner equation was discovered in 1923 by Charles Loewner in pursuit of proving the Bieberbach conjecture and it reemerged in 2000, when Oded Schramm discovered its relationship to the scaling limit of loop-erased random walks. This discovery lead to construction of the Schramm-Loewner Evolution (SLE$_\kappa$) and has been vigorously studied ever since.

In this paper, our main focus is the multiple Loewner equation
\begin{equation}\label{eqn:multiLEr}
    \frac{\partial}{\partial t} g_t(z)=\sum_{k=1}^{n}\frac{2w_k(t)}{g_t(z)-\lambda_k(t)}\text{ a.e. }t\in [0,T],
    \quad g_0(z)=z
\end{equation}
where $\lambda_1,...,\lambda_n:[0,T]\to\mathbb{R}$ are continuous and $w_1,...,w_n\in L^1[0,T]$ are weight functions. In \cite{KNK04}, it was conjectured that the multiple Loewner equation driven by $\lambda_1=-1$ and $\lambda_2=1$ with constant weights equal to $\frac{1}{2}$ could be realized by a single rapidly and randomly oscillating function driven by the Loewner equation (\ref{eqn:LEintro}). We prove this conjecture with the following more general result.

\begin{prop}\label{prop:nonconstantweights}
Let $K=\bigcup_{i=1}^{n}K_i$, where $K_1,...,K_n$ are disjoint hulls driven by continuous driving functions in the chordal sense. Then $K$ is the limit of hulls generated by a sequence of randomly and rapidly oscillating functions.
\end{prop}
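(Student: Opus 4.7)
The plan is to combine two ingredients: (i) the generalization of the Roth–Schleissinger theorem announced in the abstract, which represents $K$ as the hull generated by the multiple Loewner equation \eqref{eqn:multiLEr} with \emph{constant} weights $w_1,\dots,w_n$ (summing, after a time rescaling, to $1$) and continuous drivers $\lambda_1,\dots,\lambda_n$; and (ii) an operator-splitting / Trotter-product approximation that replaces this weighted vector field by a single rapidly oscillating driver. Step (i) lets me reduce the proposition to the concrete problem of approximating one run of \eqref{eqn:multiLEr} by one run of \eqref{eqn:LEintro}.

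For step (ii), I would partition $[0,T]$ into $N$ intervals $I_m=[t_{m-1},t_m]$ of length $\Delta t=T/N$. On each $I_m$ I would subdivide into $n$ consecutive sub-intervals $J_{m,i}$ of lengths $w_i\Delta t$, arranged in a random order $\sigma_m$ (a uniform permutation of $\{1,\dots,n\}$, independent across $m$), and define $\lambda^{(N)}(t)=\lambda_i(t_{m-1})$ for $t\in J_{m,\sigma_m(i)}$. The hull $K^{(N)}$ generated by $\lambda^{(N)}$ through \eqref{eqn:LEintro} is then driven by a piecewise-constant, randomly and rapidly oscillating function, so the proposition reduces to showing $K^{(N)}\to K$ as $N\to\infty$.

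To do so, I would compare the classical Loewner map $g_t^{(N)}$ driven by $\lambda^{(N)}$ with the multiple Loewner map $g_t$ solving \eqref{eqn:multiLEr}. On a single slab $I_m$, composing the $n$ single-driver flows with weights $w_i\Delta t$ and drivers $\lambda_i(t_{m-1})$ agrees, to first order in $\Delta t$, with running \eqref{eqn:multiLEr} with drivers frozen at $t_{m-1}$; a Taylor expansion of the vector fields $z\mapsto 2/(z-\lambda_i)$ (whose derivatives are bounded on compact subsets of $\mathbb{H}\setminus K$) gives a splitting error of $O((\Delta t)^2)$ per slab, and replacing $\lambda_i(t)$ by $\lambda_i(t_{m-1})$ contributes an additional $O(\Delta t\cdot \omega(\Delta t))$ where $\omega$ is a joint modulus of continuity. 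A Gr\"onwall step then propagates these local errors, producing uniform convergence $g_t^{(N)}\to g_t$ on compact subsets of $\mathbb{H}\setminus K$; Carath\'eodory kernel convergence upgrades this to convergence of the hulls themselves, completing the proof.

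The main obstacle is controlling the estimates \emph{uniformly} up to $K$. Away from $K$ the vector fields are smooth and the Gr\"onwall argument is routine, but near the tips of the $K_i$ both $g_t^{(N)}$ and $g_t$ blow up and the naive comparison degenerates. The disjointness hypothesis is what rescues this: since $\operatorname{dist}(K_i,K_j)>0$ for $i\neq j$, in a neighborhood of $K_i$ every driver $\lambda_j$ with $j\neq i$ is uniformly bounded away from the relevant singularity, so only one singular summand of \eqref{eqn:multiLEr} is dangerous at a time, and the comparison near $K_i$ reduces essentially to the single-hull Loewner stability theory. I expect the cleanest route is to re-cast the convergence in terms of half-plane capacities (which are additive for disjoint hulls), verifying $\hcap(K^{(N)})\to\hcap(K)$ and localizing the comparison near each $K_i$ separately before invoking the Carath\'eodory theorem to conclude $K^{(N)}\to K$.
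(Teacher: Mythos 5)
Your step (i) is exactly the paper's first move: invoke Theorem \ref{thm:3.6.2} to represent $K$ via the multiple Loewner equation with constant weights $w_1,\dots,w_n$ and continuous drivers $\lambda_1,\dots,\lambda_n$. Where you diverge is step (ii). The paper does not do any operator splitting or Gr\"onwall estimate. It observes that a single driver that oscillates between the $\lambda_j$ is \emph{literally} a solution of the multiple Loewner equation (\ref{eqn:multiLEr}) whose weights are the $\{0,1\}$-valued indicator functions recording which driver is ``on''; so the whole approximation question reduces to showing that these indicator weights converge weakly in $L^1$ to the constants $w_j$, which for i.i.d.\ Bernoulli$(w_j)$ choices on the intervals $[\tfrac{k-1}{n},\tfrac{k}{n})$ follows from the Strong Law of Large Numbers (Lemma \ref{lem:randomwweights}). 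The Carath\'eodory convergence of the chains is then immediate from Theorem \ref{thm:rs2.4rrodf} (Roth--Schleissinger), with the drivers held fixed and only the weights varying. Your randomization (a uniform permutation of $n$ deterministic sub-blocks of lengths $w_i\Delta t$ per slab) also yields weak convergence of the corresponding weights --- in fact essentially deterministically, as in the ``controlled oscillation'' Lemma \ref{lem:onehalfweights} --- so that variant could be closed the same way.

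As written, however, your step (ii) has a genuine gap. The Trotter/Gr\"onwall comparison is routine only on compact subsets of $\mathbb{H}\setminus K$ bounded away from the hull, and the place where you wave at the difficulty --- ``controlling the estimates uniformly up to $K$'' --- is exactly where the content lies; disjointness of the $K_i$ localizes the singularities but does not by itself control the blow-up of $2/(g_t(z)-\lambda_i(t))$ as $z$ approaches $K_i$, and no quantitative mechanism is given for that. Your closing step is also off: Carath\'eodory (kernel) convergence of the maps does \emph{not} in general upgrade to convergence ``of the hulls themselves'' in a Hausdorff sense, and no such upgrade is needed --- the proposition's notion of ``limit of hulls'' is defined in Section \ref{convofrapidrandomhulls} precisely as Carath\'eodory convergence of the associated conformal maps. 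The efficient fix is to abandon the splitting estimate entirely and route the argument through the identification of the oscillating single-driver equation with the weighted multiple equation, plus Theorem \ref{thm:rs2.4rrodf}.
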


\noindent This proposition inspires a simulation method for hulls from the multiple Loewner equation driven with constant weights. The idea is to use a single driving function that randomly and rapidly oscillates between the multiple driving functions, which generalizes the conjecture in \cite{KNK04}. We simulate the hull investigated in \cite{KNK04} and compare it to the actual hull in Section \ref{simulations}.

The proof of Proposition \ref{prop:nonconstantweights} result follows from a generalization of Theorem 1.1 in \cite{RS14}, which says that multiple slits can be generated through the multiple Loewner equation by continuous driving functions and constant weights. We generalize this to multiple hulls, as follows:

\begin{thm}\label{thm:3.6.2}
Let $K^1,...,K^n$ be disjoint Loewner hulls. Let $\hcap(K^1\cup\cdots\cup K^n)=2T$. Then there exist constants $w_1,...,w_n\in(0,1)$ with $\sum_{k=1}^{n}w_k=1$ and continuous driving functions $\lambda_1,...,\lambda_n:[0,T]\to\mathbb{R}$ so that
\begin{equation}
    \frac{\partial}{\partial t} g_t(z)=\sum_{k=1}^{n}\frac{2w_k}{g_t(z)-\lambda_k(t)},\quad g_0(z)=z
\end{equation}
satisfies $g_T=g_{K^1\cup\cdots\cup K^n}$.
\end{thm}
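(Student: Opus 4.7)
My plan is to generalize the Roth-Schleissinger construction (Theorem 1.1 of \cite{RS14}) from slits to arbitrary disjoint hulls, by growing $K^1,\ldots,K^n$ simultaneously along a carefully chosen path in capacity space and solving a fixed-point problem for the weights. For each $k$, parametrize $K^k$ by its own half-plane capacity: let $K^k_s$ denote the initial portion with $\hcap(K^k_s) = 2s$ for $s \in [0, T^k]$ where $T^k := \hcap(K^k)/2$, with continuous driving function $\lambda^k:[0, T^k]\to \mathbb{R}$. For each multi-index $\mathbf{s} = (s_1,\ldots,s_n) \in R := \prod_k [0, T^k]$, put $K(\mathbf{s}) := \bigcup_k K^k_{s_k}$ and $\Phi(\mathbf{s}) := \tfrac{1}{2}\hcap(K(\mathbf{s}))$. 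Using the composition formula for half-plane capacity (applying $g_{K^j_{s_j}}$'s in any order), $\Phi$ is continuous on $R$, strictly increasing in each coordinate, with $\Phi(\mathbf{0}) = 0$ and $\Phi(T^1,\ldots,T^n) = T$; moreover $\partial_k \Phi(\mathbf{s}) = \beta_k(\mathbf{s}) > 0$, where $\beta_k(\mathbf{s})$ is the squared derivative, at the driving point of $K^k_{s_k}$, of the conformal map that uniformizes the other hulls $K^j_{s_j}$ ($j \ne k$).

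For any $C^1$ increasing path $\mathbf{s}:[0, T] \to R$ with $\mathbf{s}(0) = \mathbf{0}$, $\mathbf{s}(T) = (T^1,\ldots,T^n)$ and $\Phi(\mathbf{s}(t)) = t$, differentiating the conformal maps $g_t := g_{K(\mathbf{s}(t))}$ yields an equation
\begin{equation*}
    \partial_t g_t(z) \;=\; \sum_{k=1}^n \frac{2\,\dot s_k(t)\,\beta_k(\mathbf{s}(t))}{g_t(z) - \lambda_k(t)},
\end{equation*}
where $\lambda_k(t)$ is the image of the driving point of $K^k_{s_k(t)}$ under the uniformizing map of the remaining hulls. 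Differentiating the constraint $\Phi(\mathbf{s}(t)) = t$ gives $\sum_k \beta_k(\mathbf{s}(t))\dot s_k(t) = 1$, so the coefficients $w_k(t) := \beta_k(\mathbf{s}(t))\dot s_k(t)$ automatically sum to $1$. The task reduces to finding $\mathbf{w}^\star \in \Delta^\circ := \{\mathbf{w} : w_k > 0,\ \sum_k w_k = 1\}$ such that the ODE $\dot s_k = w_k^\star/\beta_k(\mathbf{s})$ with $\mathbf{s}(0) = \mathbf{0}$ reaches the target $(T^1,\ldots,T^n)$ at time $T$.

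For each $\mathbf{w} \in \Delta^\circ$ this ODE has a unique $C^1$ solution on $[0, T]$, since disjointness of the $K^k$ makes $\beta_k$ continuous and bounded away from $0$ and $\infty$ on $R$. The endpoint map $\Psi:\Delta^\circ \to R$ defined by $\Psi(\mathbf{w}) := \mathbf{s}(T;\mathbf{w})$ is therefore continuous (continuous dependence of ODE solutions on parameters), and I would produce $\mathbf{w}^\star$ with $\Psi(\mathbf{w}^\star) = (T^1,\ldots,T^n)$ via a Poincar\'e-Miranda or Brouwer degree argument, using that $\Psi$ has the expected behavior at $\partial\Delta$: as $w_k \to 0$ the $k$-th coordinate $s_k(T;\mathbf{w}) \to 0$, and as $w_k \to 1$ one has $s_k(T;\mathbf{w}) \to T^k$ while the other coordinates stay strictly below their maxima. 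Such a $\mathbf{w}^\star$ yields the required constants $w_k$, and the associated $\lambda_k(t)$ from the effective equation above are the continuous drivers. The main obstacle I anticipate is making the boundary analysis of $\Psi$ precise enough to invoke the fixed-point theorem and keep $\mathbf{w}^\star$ in the open simplex (so that every $w_k > 0$); a secondary technical point is verifying continuity of each $\lambda_k(t)$ uniformly on $[0, T]$ at times when some $s_k(t)$ equals $0$ or $T^k$, which should follow from boundary continuity of the uniformizing maps guaranteed by disjointness.
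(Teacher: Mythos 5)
Your architecture is genuinely different from the paper's: the paper grows the hulls one at a time on dyadic intervals (the instantaneous weight $\alpha_{n,w}$ is always $0$ or $1$), pins down the switching fraction $w_n$ by the Intermediate Value Theorem so that the terminal hull is exactly $K^1\cup K^2$, and then obtains constant weights and continuous drivers in the limit via the precompactness theorem (Theorem \ref{thm:3.6.5}) and weak convergence of the weights (Theorem \ref{thm:rs2.4rrodf}). You instead try to solve the problem exactly in the continuum, via an ODE in capacity space and a degree argument for the weights. The skeleton (grow all hulls, tune the weights by an intermediate-value/degree argument, read off the drivers) is sound, but your route front-loads regularity that is not available for general hulls, and this creates genuine gaps.

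Concretely: (1) You claim the ODE $\dot s_k=w_k/\beta_k(\mathbf{s})$ has a unique solution depending continuously on $\mathbf{w}$ because $\beta_k$ is continuous and bounded away from $0$ and $\infty$. Continuity and boundedness of the right-hand side give existence (Peano) but not uniqueness; without uniqueness the endpoint map $\Psi$ is set-valued and the Poincar\'e--Miranda/Brouwer step does not apply as stated. Uniqueness would require $\beta_k$ to be locally Lipschitz in $\mathbf{s}$, and for hulls driven by merely continuous functions there is no reason for this; indeed even the existence of $\partial_k\Phi$ at every point (Lemma \ref{lem:lsw2.8} gives the derivative only at $t=0$) and the joint continuity of $\beta_k$ are unproved assertions about the derivative of the uniformizing map of a growing hull at a moving boundary prime end. (2) $\Psi$ need not be defined on all of $\Delta^\circ$: for a fixed interior $\mathbf{w}$, some coordinate $s_k(t)$ can reach $T^k$ strictly before time $T$, after which $K^k_{s_k}$ and $\beta_k$ are undefined, so the trajectory exits the rectangle $R$ through a face rather than the corner. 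One must instead work with the exit-point map stopped at $\partial R$ and run the degree argument there --- which is essentially the paper's IVT step in disguise. (3) The continuity of $\lambda_k(t)=g_{g_{K^k_{s_k}}(\cup_{j\neq k}K^j_{s_j})}\bigl(\lambda^k(s_k(t))\bigr)$, which you defer as a secondary technical point, is in fact the heart of the matter: it is precisely what the prime-end machinery and Lemmas \ref{lem:3.6.9}--\ref{lem:3.6.11} are built to establish, and it does not follow from soft boundary-continuity considerations. As written, the proposal therefore rests on unproved regularity of $\Phi$, $\beta_k$, and $\lambda_k$, and on an ODE uniqueness claim that is false at the stated level of generality.
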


One significant difference between Theorem 1.1 in \cite{RS14} and this result is the lack of uniqueness. This is due to the fact that we do not know the growth over time of the hulls in Theorem \ref{thm:3.6.2}, we only know what the hull looks like at a particular time. This ambiguity allows the possibility that a hull can be driven by different driving functions, whereas any slit has a unique driving function. For example, if the hull is a semi-circle of radius 1 centered at 0, then two ways to generate this hull are by travelling the boundary clockwise or counterclockwise. This corresponds to scaling the driving function by $-1$. However, if we have $K_t^j$ for each time and each $j\in\{1,...,n\}$, then using the same proof of uniqueness for slits from \cite{RS14}, we would have uniqueness in the multiple hull setting as well.

This paper is structured as follows: Section \ref{convofrapidrandomhulls} introduces enough about the Loewner equation to prove Proposition \ref{prop:nonconstantweights} from Theorem \ref{thm:3.6.2}. Section \ref{simulations} discusses simulation of the multiple Loewner equation. Section \ref{background} rigorously covers the background information about the Loewner equation, hulls, and a generalization of the tip of a curve, which is needed to prove Theorem \ref{thm:3.6.2}. Finally, Section \ref{precompactness} gives the proof of Theorem \ref{thm:3.6.2}. Sections \ref{background} and \ref{precompactness} can be read without reading Sections \ref{convofrapidrandomhulls} and \ref{simulations}. As in \cite{RS14}, we will only show results for $n=2$ and the general result follows from mathematical induction.\par\vspace{12pt}

\noindent\textbf{Acknowledgement:} I would like to thank Joan Lind for all of her help and support with this paper.

\section{Convergence of Hulls Using Rapid and Random Oscillation}\label{convofrapidrandomhulls}
\subsection{Brief Introduction to Loewner Equation}

Our goal is to discuss convergence of a rapidly and randomly oscillating driving function, but we need to define what convergence we will use. We say that $g_t^n$ converges to $g_t$ in the Carath\'eodory sense, denoted $g_t^n\xrightarrow{Cara} g_t$, if for each $\epsilon>0$ $g_t^n$ converges to $g_t$ uniformly on the set
\begin{equation}
    [0,T]\times\{z\in\mathbb{H}:\text{dist}(z,K_t)\geq\epsilon\}.
\end{equation}
This form of convergence allows for convergence of functions when their domains are changing.

\subsection{Introduction to Conjecture}\label{introtoconjecture}

In Section 6 of \cite{KNK04}, Kager, Nienhuis, and Kadanoff investigate the multiple Loewner equation generated from constant driving functions, $\lambda_1\equiv -1$ and $\lambda_2\equiv 1$, and constant weights, $w_1=w_2=\frac{1}{2}$. They show that the hull is given by
\begin{equation}\label{eqn:knkhull}
K_t=\left\{\sqrt{\frac{2\theta_t}{\sin(2\theta_t)}}(\pm\cos\theta_t+i\sin\theta_t)\right\}
\end{equation}
where $\theta_t$ increases from 0 to $\frac{\pi}{2}$ as $t$ increases. They make the conjecture that the same hull can be generated by a single driving function that ``makes rapid (random) jumps between the values $\lambda_j$.'' In this section, we will say that a sequence of driving functions generate a hull if the corresponding conformal maps from the Loewner equation converge in the Carath\'eodory sense to the conformal map corresponding to the hull. We will prove their conjecture constructively. The key tool in the proof is the use of the following theorem by Roth and Schleissinger from \cite{RS14} which we use to relate the multiple Loewner equation and a single driving function.

\begin{thm}[2.4 \cite{RS14}]\label{thm:rs2.4rrodf}
For $j\in\{1,2\}$ let $w_j^n,w_j\in L^1[0,1]$ be weight functions and let $\lambda_j^n,\lambda_j\in C[0,1]$ be driving functions with associated Loewner chains $g_t^n, g_t$. If $\lambda_j^n$ converges to $\lambda_j$ uniformly on $[0,1]$ and if $w_j^n$ converges weakly in $L^1[0,1]$ to $w_j$ for $j=1,2$, then $g_t^n$ converges in the Carath\'eodory sense to the chain $g_t$.
\end{thm}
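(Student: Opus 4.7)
The plan is to prove uniform convergence $g_t^n \to g_t$ on every set of the form $E_\epsilon := \{(t,z) \in [0,1] \times \mathbb{H} : \text{dist}(z, K_t) \geq \epsilon\}$, which is exactly Carath\'eodory convergence. The argument is built on the integral form of the multiple Loewner equation, a Gr\"onwall bootstrap, and the Dunford--Pettis characterization of weak $L^1$ convergence. First I would establish uniform a priori bounds on $E_\epsilon$: since $K_s \subseteq K_t$ for $s \leq t$, the condition $\text{dist}(z, K_t) \geq \epsilon$ forces a lower bound $|g_s(z) - \lambda_j(s)| \geq c_\epsilon > 0$ for all $s \leq t$ via standard Loewner estimates. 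Because $\lambda_j^n \to \lambda_j$ uniformly and $\{w_j^n\}$ is $L^1$-bounded (weakly convergent sequences are bounded), a continuity argument then shows that for $n$ large, $g_s^n(z)$ is defined on $E_\epsilon$ with $|g_s^n(z) - \lambda_j^n(s)| \geq c_\epsilon/2$ uniformly in $n$, $s$, $z$.

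With these bounds in hand, I would write
\begin{equation*}
g_t^n(z) - g_t(z) = \int_0^t \sum_{j=1}^{2} 2\left[\frac{w_j^n(s)}{g_s^n(z) - \lambda_j^n(s)} - \frac{w_j(s)}{g_s(z) - \lambda_j(s)}\right] ds
\end{equation*}
and decompose the integrand by adding and subtracting $\frac{w_j^n(s)}{g_s(z) - \lambda_j(s)}$, producing a ``continuity'' piece $A_n(t,z)$ and a ``weight'' piece $B_n(t,z)$. The continuity piece satisfies
\begin{equation*}
|A_n(t,z)| \leq C_\epsilon \int_0^t \bigl(|g_s^n(z) - g_s(z)| + \|\lambda^n - \lambda\|_\infty\bigr)\bigl(|w_j^n(s)| + |w_j(s)|\bigr)\, ds,
\end{equation*}
so a Gr\"onwall-type argument, using the $L^1$ boundedness of $\{w_j^n\}$, reduces the whole estimate to controlling
\begin{equation*}
B_n(t,z) := \int_0^t \sum_j \frac{2\,(w_j^n(s) - w_j(s))}{g_s(z) - \lambda_j(s)}\, ds.
\end{equation*}

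For fixed $(t,z) \in E_\epsilon$, the function $s \mapsto \mathbf{1}_{[0,t]}(s)/(g_s(z) - \lambda_j(s))$ belongs to $L^\infty[0,1]$, so weak $L^1$ convergence of $w_j^n$ gives $B_n(t,z) \to 0$ pointwise. The hard part is upgrading this to uniform convergence on $E_\epsilon$. I would do so by Arzel\`a--Ascoli: equicontinuity of $\{B_n\}$ in $t$ follows from uniform integrability of $\{w_j^n\}$ (Dunford--Pettis), and equicontinuity in $z$ follows from the Lipschitz dependence of $(g_s(z) - \lambda_j(s))^{-1}$ on $z$, given the uniform lower bound $c_\epsilon$. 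The main obstacle is precisely this upgrade from pointwise weak convergence to uniform convergence, since dominated convergence is unavailable; one must exploit compactness of the parameter set together with the equicontinuity coming from Dunford--Pettis. A secondary technicality is verifying uniform existence of $g_s^n(z)$ on $E_\epsilon$ for large $n$, which amounts to showing that the approximating hulls $K_t^n$ do not encroach on $z$ despite only weak control on the weights.
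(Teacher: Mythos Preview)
The paper does not prove this theorem at all: Theorem~\ref{thm:rs2.4rrodf} is quoted verbatim as Theorem~2.4 of \cite{RS14} and used as a black box throughout (in Lemma~\ref{lem:onehalfweights}, in the proof of Proposition~\ref{prop:nonconstantweights}, and in the proof of Theorem~\ref{thm:3.6.2}). There is therefore no ``paper's own proof'' to compare your proposal against.

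As for the proposal itself, your outline is the standard route one would take for such a stability result and is broadly consistent with how Roth and Schleissinger argue in \cite{RS14}: integral form of the Loewner ODE, algebraic splitting into a Lipschitz/Gr\"onwall piece and a weak-convergence piece, and then a compactness/equicontinuity argument to upgrade pointwise to uniform convergence. Two places deserve more than the passing mention you give them. First, the claim that $|g_s^n(z)-\lambda_j^n(s)|\geq c_\epsilon/2$ for large $n$ is not a ``secondary technicality''; it is exactly what prevents the whole estimate from being circular, and it typically requires a continuous-induction (bootstrap) argument on the maximal interval where the bound holds, not merely ``a continuity argument''. Second, your Gr\"onwall step uses an integrating factor that is only $L^1$ (since the $w_j^n$ are merely $L^1$); the inequality still holds in that generality, but you should say so explicitly rather than invoking the textbook version. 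With those two points fleshed out, the sketch is sound.
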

The idea to constructing a randomly, rapidly oscillating driving function is to use the driving functions that generate the hull $K_t$ from the multiple Loewner equation. We do this by dividing up the time interval into smaller intervals and then randomly pick which driving function to use on each small interval. This random picking is governed by the weights. Furthermore, this construction is not limited to the case described above that is considered in \cite{KNK04}. In fact, Proposition \ref{prop:nonconstantweights} is a more general answer to their conjecture.

\subsection{Controlled Oscillation}

Before we tackle the conjecture, we will do an example. In the situation of \cite{KNK04}, let $\lambda_1\equiv -1$, $\lambda_2\equiv 1$, $w_1=w_2=\frac{1}{2}$, and $K_t$ be as in (\ref{eqn:knkhull}). We will create a sequence of rapidly oscillating functions that generate $K_t$. The idea here is essentially the idea in the more general case: divide the interval into smaller pieces and decide whether the driving function is $-1$ or $1$ on each piece. Here, since $w_1=w_2=\frac{1}{2}$, we will simply rotate between the driving functions $-1$ and $1$. Let
\begin{equation}
\lambda^n(t)=\sum_{k=0}^{2^{n-2}}\chi_{[\frac{2k+1}{2^n},\frac{2(k+1)}{2^n})}-\chi_{[\frac{2k}{2^n},\frac{2k+1}{2^n})}(t).
\end{equation}
So, we take $[0,1]$ and divide it into an even number of intervals of the from $[\frac{j}{2^n},\frac{j+1}{2^n})$. When $j$ is even $\lambda^n|_{[\frac{j}{2^n},\frac{j+1}{2^n})}\equiv -1$ and when $j$ is odd $\lambda^n|_{[\frac{j}{2^n},\frac{j+1}{2^n})}\equiv 1$. This means for any $n\in\mathbb{N}$ $\lambda^n(t)=-1=\lambda_1$ for half of the time and $\lambda^n(t)=1=\lambda_2$ for the other half of the time, corresponding to $w_1=w_2=\frac{1}{2}$. Now, we will show that $K_t$ is generated by $\lambda^n$. The proof uses Theorem \ref{thm:rs2.4rrodf} to relate the multiple Loewner equation to a single driving function. We have already defined the driving function, so we will now set up the multiple Loewner equation situation. Define the weight functions
\begin{equation}\label{eqn:controlledjumps}
w_1^n(t):=\sum_{k=0}^{2^{n-1}-1}\chi_{[\frac{2k}{2^n},\frac{2k+1}{2^n})}(t)\quad
\text{and}
\quad
w_2^n(t):=\sum_{k=1}^{2^{n-1}}\chi_{[\frac{2k-1}{2^n},\frac{2k}{2^n})}(t).
\end{equation}
At any time, they sum to 1 and they are never 1 at the same time. We will show $w_j^n$ converges to $\frac{1}{2}$ weakly. Since the conformal maps from the Loewner equation driven by $\lambda^n$ and the conformal maps from the multiple Loewner equation driven by $\lambda_1$, $\lambda_2$, $w_1^n$, and $w_2^n$ are the same, we will have that $K_t$ is generated by $(\lambda^n)_{n=1}^{\infty}$.

\begin{lemma}\label{lem:onehalfweights}
As $n\to\infty$, $w_j^n$ converges weakly to $\frac{1}{2}$ for $j=1,2$ - that is, for each $h\in L^{\infty}[0,1]$
\begin{equation}
\int w_1^n h\to \int\frac{1}{2}h\text{ as }n\to\infty.
\end{equation}
\end{lemma}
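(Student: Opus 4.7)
The plan is a standard three-step density argument, exploiting the fact that $w_1^n$ and $w_2^n$ are uniformly bounded indicator functions whose ``blocks'' are dyadic intervals of length $2^{-n}$.

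First I would verify the conclusion when $h = \chi_{[a,b]}$ is the indicator of a subinterval of $[0,1]$. For such $h$, $\int_0^1 w_1^n h\, dt$ equals the total length of the ``even'' dyadic intervals $[2k/2^n, (2k+1)/2^n)$ intersected with $[a,b]$. Away from the endpoints $a$ and $b$, dyadic intervals inside $[a,b]$ alternate between contributing length $2^{-n}$ and $0$, so they contribute $(b-a)/2$ up to an error of at most $2 \cdot 2^{-n}$ coming from the two boundary dyadic intervals that straddle $a$ or $b$. Hence $\int_0^1 w_1^n \chi_{[a,b]}\, dt \to (b-a)/2 = \int_0^1 \tfrac12 \chi_{[a,b]}\, dt$.

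Second, by linearity, $\int w_1^n s \to \int \tfrac12 s$ for every step function $s = \sum c_i \chi_{[a_i,b_i]}$. Third, for a general $h \in L^\infty[0,1]$, note $h \in L^1[0,1]$ since $[0,1]$ has finite measure, so for any $\epsilon > 0$ we may choose a step function $s$ with $\|h - s\|_{L^1} < \epsilon$. Using the uniform bound $\|w_1^n - \tfrac12\|_{L^\infty} \le \tfrac12$, we estimate
\begin{equation*}
    \left| \int_0^1 (w_1^n - \tfrac12) h\, dt \right|
    \le \tfrac12 \|h - s\|_{L^1} + \left| \int_0^1 (w_1^n - \tfrac12) s\, dt \right|
    \le \tfrac12 \epsilon + o(1)
\end{equation*}
as $n\to\infty$, and then sending $\epsilon\to 0$ proves the weak convergence of $w_1^n$ to $\tfrac12$. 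The case $j=2$ follows immediately from $w_2^n = 1 - w_1^n$ (which holds a.e.\ on $[0,1]$).

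There is no real obstacle here: the argument is a textbook Lebesgue-approximation reduction to the case of indicators of intervals. The only thing to monitor carefully is the $O(2^{-n})$ boundary error in Step~1, which justifies the passage from $s$ to $h$ via the uniform $L^\infty$ bound on $w_1^n - \tfrac12$.
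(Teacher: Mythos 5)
Your proof is correct, but it takes a genuinely different route from the one the paper uses for this lemma. The paper invokes Lusin's theorem to replace $h$ by a function that is uniformly continuous on a compact set $E$ of nearly full measure, and then exploits the cancellation between consecutive dyadic blocks $I_{2k}$ and $I_{2k+1}$ via the modulus of continuity of $h$ on $E$; the small exceptional set is handled by the uniform bound $|w_1^n-\tfrac12|\le\tfrac12$. You instead compute $\int w_1^n\chi_{[a,b]}$ explicitly, extend by linearity to step functions, and pass to general $h\in L^\infty[0,1]$ by $L^1$-density of step functions together with the same uniform bound. Both arguments are elementary and complete; yours has the virtue of being exactly the skeleton the paper itself uses later for the \emph{random} weights (the chain of Claims proving Lemma \ref{lem:randomwweights}), so it unifies the deterministic and random cases, while the paper's Lusin argument sidesteps the endpoint bookkeeping in your Step 1. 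On that bookkeeping: the error there comes not only from the two partial dyadic intervals straddling $a$ and $b$ but also from a possibly unpaired full dyadic interval when the even/odd counts inside $[a,b]$ differ by one, so the constant is a bit larger than $2\cdot 2^{-n}$; it is still $O(2^{-n})$, which is all you need.
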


\begin{proof}
We will prove this for $j=1$ first. Let $\epsilon>0$ and $h\in L^{\infty}[0,1]$. By Lusin's Theorem there exists $E\in\mathcal{B}([0,1])$ (the Borel sets of $\mathbb{R}$) compact with $m([0,1]\setminus E)<\frac{\epsilon}{2||h||_{\infty}}$ ($m$ denotes Lebesgue measure) and $h$ is continuous on $E$. So,
\begin{equation}
\left|\int_{[0,1]\setminus E}h\left(w_1^n-\frac{1}{2}\right)\right|<\frac{\epsilon}{2}.
\end{equation}
Since $E$ is compact, $h$ is uniformly continuous on $E$. So there exists $\delta>0$ such that for each $x,y\in E$ with $|x-y|<\delta$, we have that $|h(x)-h(y)|<\epsilon$. Also, there exists $N\in\mathbb{N}$ such that for all $n\geq N$, $\frac{1}{2^{n-1}}<\delta$. Let $n\geq N$. For $k\in\mathbb{N}$, define
\begin{equation}
I_k=\left[\frac{k}{2^n},\frac{k+1}{2^n}\right)\cap E.
\end{equation}
Then
\begin{equation}
    \left|\int_{E}h\cdot\left(w_1^n-\frac{1}{2}\right)\right|
    =\left|\sum_{k=0}^{2^n-1}\int_{I_k}\dfrac{(-1)^k}{2}h\right|
    \leq\frac{1}{2}\sum_{k=0}^{2^{n-1}-1}\left|\int_{I_{2k}}h-\int_{I_{2k+1}}h\right|.
\end{equation}
Since the length of $I_{2k}\cup I_{2k+1}$ is $\frac{1}{2^{n-1}}<\delta$, for all $x\in I_{2k}\cup I_{2k+1}$,
\begin{equation}
    h\left(\frac{2k+1}{2^n}\right)-\epsilon
    \leq h(x)
    \leq h\left(\frac{2k+1}{2^n}\right)+\epsilon.
\end{equation}
So,
\begin{equation}
    \left|\int_{I_{2k}}h-\int_{I_{2k+1}}h\right|
    \leq\frac{1}{2^n}(2\epsilon)
    =\frac{\epsilon}{2^{n-1}}.
\end{equation}
Hence,
\begin{equation}
    \left|\int_{E}h\cdot\left(w_1^n-\frac{1}{2}\right)\right|
    \leq\frac{1}{2}\sum_{k=0}^{2^{n-1}-1}\frac{\epsilon}{2^{n-1}}
    <\epsilon.
\end{equation}
This shows that $w_1^n$ converges weakly to $\frac{1}{2}$.\par
Since $w_2^n=1-w_1^n$, we have that $w_2^n$ converges weakly to $\frac{1}{2}$, as well.
\end{proof}

Since $\lambda^n(t)=w_1^n(t)\lambda_1(t)+w_2^n(t)\lambda_2(t)$, by Theorem \ref{thm:rs2.4rrodf}, we have that $K_t$ is generated by $\lambda^n$. This proves that $K_t$ is generated by a rapidly oscillating function.

\subsection{Rapid, Random Oscillation}\label{rapidrandomoscillation}

Now that we have shown that a rapidly oscillating function can be used to satisfy the conjecture in \cite{KNK04}, we turn to proving that we do not have to control the oscillation as we did before. In the random case, we begin construction of the sequence of driving functions by defining weight functions. Let $w_1\in(0,1)$ and $w_2=1-w_1$ be constants. For each $k\in\mathbb{N}$, let $X_k$ be a random variable such that $P(X_k=1)=w_1$ and $P(X_k=0)=w_2$ (i.e. $X_k$ is a Bernoulli random variable). For each $n\in\mathbb{N}$ and $k\in\{1,...,n\}$, define
\begin{equation}
I_k^n=\left[\frac{k-1}{n},\frac{k}{n}\right).
\end{equation}
For each $n\in\mathbb{N}$, define
\begin{equation}\label{eqn:defofrandomweights}
w_1^n=\sum_{k=1}^{n}X_k\chi_{I_{k}^{n}}(t)\quad
\text{and}\quad
w_2^n=\sum_{k=1}^{n}(1-X_k)\chi_{I_{k}^{n}}(t).
\end{equation}
Then for every $t\in[0,1]$ and $n\in\mathbb{N}$, $w_1^n(t)+w_2^n(t)=1$ a.s. Further, $w_1^n(t)=1$ only when $w_2^n(t)=0$ and vice versa. Let
\begin{equation}
\lambda^n(t)=w_1^n(t)\lambda_1(t)+w_2^n(t)\lambda_2(t).
\end{equation}
For any $n\in\mathbb{N}$, $\lambda^n$ rapidly (for large $n$) and randomly oscillates between the values of $\lambda_1$ and $\lambda_2$. The idea here is that $w_j^n$ turns off and on $\lambda_j$. So, essentially we are using the single Loewner equation to approximate the multiple Loewner equation and the weights control which function is turned on or picked in the intervals $I_k^n$. We will first show that $w_j^n$ converges weakly to $w_j$ for $j=1,2$. Then using Theorems \ref{thm:rs2.4rrodf} and \ref{thm:3.6.2}, we will obtain the desired result.

\begin{lemma}\label{lem:randomwweights}
As $n\to\infty$, almost surely $w_j^n$ as in (\ref{eqn:defofrandomweights}) converges weakly to $w_j$ for $j=1,2$.
\end{lemma}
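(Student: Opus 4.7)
The plan is to verify weak convergence $\int h\, w_1^n \to w_1 \int h$ almost surely for each \emph{fixed} test function $h\in L^\infty[0,1]$ via an exponential concentration estimate, and then to promote this one-test-function statement to a simultaneous almost-sure statement by isolating a countable $L^1$-dense family of test functions and exploiting the uniform bound $\|w_1^n\|_\infty \leq 1$.

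For the first step, fix $h\in L^\infty[0,1]$ and set $a_{k,n}:=\int_{I_k^n} h$, so $|a_{k,n}|\leq \|h\|_\infty/n$ and
\[
\int_0^1 h\, w_1^n \;=\; \sum_{k=1}^n X_k\, a_{k,n}.
\]
Since $E[X_k]=w_1$ and $\sum_{k=1}^n a_{k,n}=\int_0^1 h$, this sum has expectation $w_1\int_0^1 h$. The $X_k$ are independent, so Hoeffding's inequality yields, for every $\epsilon>0$,
\[
P\!\left(\left|\int_0^1 h\, w_1^n - w_1 \int_0^1 h\right|>\epsilon\right) \;\leq\; 2\exp\!\left(-\frac{2\epsilon^2 n}{\|h\|_\infty^2}\right),
\]
which is summable in $n$. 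The Borel--Cantelli lemma gives $\int h\, w_1^n \to w_1\int h$ almost surely.

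For the second step, choose a countable family $\mathcal{D}\subset L^\infty[0,1]$ that is dense in $L^1[0,1]$, for instance the step functions with rational values on dyadic partitions of $[0,1]$. Taking the countable union of the null events coming from the first step (one for each $h\in\mathcal{D}$) still leaves an event of full probability. On this event, given any $h\in L^\infty[0,1]$ and any $\epsilon>0$, choose $h_j\in\mathcal{D}$ with $\|h-h_j\|_1<\epsilon/2$; then H\"older's inequality gives
\[
\left|\int h\,(w_1^n - w_1)\right| \;\leq\; \left|\int h_j\,(w_1^n-w_1)\right| + \|h-h_j\|_1\,\|w_1^n-w_1\|_\infty \;\leq\; \left|\int h_j\,(w_1^n-w_1)\right| + \tfrac{\epsilon}{2},
\]
since $w_1^n,w_1\in[0,1]$. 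The first term is $<\epsilon/2$ for $n$ large by our choice of event, which proves $w_1^n\to w_1$ weakly in $L^1$. The identity $w_2^n=1-w_1^n$ immediately gives the $j=2$ statement on the same event.

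The main obstacle is exactly this upgrade from single-test-function convergence (which is essentially a Bernoulli law-of-large-numbers statement) to the uniform-in-$h$ statement demanded by weak $L^1$-convergence; that is why the \emph{exponential} rate in Hoeffding's inequality is needed, so that Borel--Cantelli applies along the full sequence without passing to a subsequence, and why a countable $L^1$-dense subset of $L^\infty$ must be isolated in order to obtain a common null set.
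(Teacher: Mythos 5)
Your proof is correct, but it takes a genuinely different route from the paper's. The paper proceeds by a bootstrap through increasingly general test functions: it first applies the Strong Law of Large Numbers to the block of Bernoulli variables $X_{a_n},\dots,X_{a_n+m_n}$ indexing the subintervals $I_k^n$ contained in a given interval $J$, then passes to step functions by finite linear combination, then to nonnegative $h$ by $L^2$-approximation with step functions, and finally to general $h$ via $h=h^+-h^-$. You instead treat an arbitrary fixed $h$ in one stroke by writing $\int h\,w_1^n=\sum_k X_k a_{k,n}$ and applying Hoeffding's inequality plus Borel--Cantelli, and then upgrade to a single full-probability event by running this over a countable $L^1$-dense family and using $\|w_1^n-w_1\|_\infty\le 1$. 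Your version buys two things: a quantitative exponential rate, and an explicit resolution of the quantifier exchange from ``for each $h$, almost surely'' to ``almost surely, for each $h$'' --- the latter being what the lemma literally asserts, and a point the paper's claim-by-claim argument leaves implicit (each of its claims produces an exceptional null set depending on the test function, and the union over all $h\in L^\infty$ is uncountable). The paper's version buys softness: it needs only the SLLN and no concentration inequality. One cosmetic remark: your Hoeffding bound degenerates when $\|h\|_\infty=0$, in which case the statement is trivial; and to get almost-sure convergence from Borel--Cantelli you should, as usual, intersect over $\epsilon=1/m$, $m\in\mathbb{N}$, which costs only another countable union of null sets.
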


\noindent We will prove this for $j=1$ using a standard approach by proving that convergence holds on intervals, for step functions, for non-negative functions, and for $L^{\infty}$ functions. Then the result will also hold for $j=2$ as $w_2^n=1-w_1^n$.

\begin{claim}
Let $J\subseteq[0,1]$ be an interval. Then almost surely
$\int_J w_1^n\to\int_Jw_1=w_1m(J)$
\end{claim}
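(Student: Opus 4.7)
The plan is to reduce the claim to a standard strong law of large numbers type argument, but adapted to the fact that the summands change with $n$ (the partition gets refined). Write
\begin{equation}
\int_J w_1^n = \sum_{k=1}^n X_k\, a_k^n, \qquad a_k^n := m(J \cap I_k^n) \in [0, 1/n].
\end{equation}
Since the $X_k$ are i.i.d.\ Bernoulli with mean $w_1$, the expectation is
\begin{equation}
E\!\left[\int_J w_1^n\right] = w_1 \sum_{k=1}^n a_k^n = w_1\, m(J),
\end{equation}
which matches the desired limit. The task is therefore to control the fluctuation $S_n := \int_J w_1^n - w_1 m(J) = \sum_{k=1}^n (X_k - w_1)\, a_k^n$ and show $S_n \to 0$ almost surely.

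The key observation is that $a_k^n \le 1/n$ forces $\sum_{k=1}^n (a_k^n)^2 \le \frac{1}{n}\sum_k a_k^n = \frac{m(J)}{n}$. A variance computation then gives $E[S_n^2] = w_1 w_2 \sum_k (a_k^n)^2 \le C/n$. Chebyshev alone yields $P(|S_n| > \epsilon) \le C/(n\epsilon^2)$, which is not summable, so the first Borel-Cantelli lemma does not apply directly along the full sequence $n \in \mathbb{N}$. The main (mild) obstacle is promoting convergence in probability to almost sure convergence when the partitions refining $[0,1]$ at different $n$ are not nested, so one cannot use a simple monotonicity argument between successive $n$.

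I would resolve this by passing to fourth moments. Expanding $E[S_n^4]$ and using independence of the $X_k$'s, only the pure fourth-power terms and the pairwise squared terms survive, giving
\begin{equation}
E[S_n^4] \le C\sum_k (a_k^n)^4 + C\Bigl(\sum_k (a_k^n)^2\Bigr)^2 \le \frac{C}{n^3} + \frac{C\, m(J)^2}{n^2} \le \frac{C'}{n^2}.
\end{equation}
Markov's inequality applied to $S_n^4$ then yields $P(|S_n| > \epsilon) \le C'/(n^2 \epsilon^4)$, which is summable. The Borel-Cantelli lemma gives $S_n \to 0$ almost surely, proving the claim. (An equivalent route is to apply Hoeffding's inequality to the bounded independent summands $(X_k - w_1)a_k^n$, which gives $P(|S_n| > \epsilon) \le 2\exp(-c n \epsilon^2)$ and hence Borel-Cantelli directly; either works, with the fourth-moment version being self-contained and elementary.)
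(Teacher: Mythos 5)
Your proof is correct, and it takes a genuinely different route from the paper's. The paper approximates the interval $J$ from the inside by a union $I_n=\bigcup_{k=a_n}^{a_n+m_n}I_k^n$ of \emph{consecutive} partition intervals, so that $\int_{I_n}w_1^n$ is exactly $\tfrac{1}{n}$ times a block sum of the $X_k$, discards the boundary (which contributes at most $m(J\setminus I_n)$), and then invokes the Strong Law of Large Numbers for the block average $\tfrac{1}{m_n}\sum_{k=a_n}^{a_n+m_n}X_k$. You instead keep all of $J$, absorb the boundary intervals into the weights $a_k^n=m(J\cap I_k^n)\le 1/n$, and prove almost sure convergence of the centered weighted sum $S_n=\sum_k(X_k-w_1)a_k^n$ from scratch via a fourth-moment bound and Borel--Cantelli (your moment computation and the summability of $C'/(n^2\epsilon^4)$ are both right, and the Hoeffding alternative is also valid). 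Each approach has something to recommend it. Yours is self-contained and, notably, never uses that $J$ is an interval --- only that $a_k^n\le 1/n$ and $\sum_k a_k^n=m(J)$ --- so it proves the claim for arbitrary measurable $J$ at no extra cost. It also sidesteps a point the paper glosses over: the SLLN is applied there to a block sum whose \emph{both} endpoints $a_n$ and $a_n+m_n$ vary with $n$, which strictly speaking requires writing the block average as a difference of two Ces\`aro averages and checking that $(a_n+m_n)/m_n$ stays bounded; your argument has no such issue. The paper's proof is shorter if one is willing to take the SLLN as a black box. One cosmetic remark: make the dependence of your constants explicit (they depend only on the Bernoulli moments and on $m(J)\le 1$, not on $n$), and state clearly at the end that you intersect the almost sure events over $\epsilon=1/m$, $m\in\mathbb{N}$, to get a single null set off which $S_n\to 0$.
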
\par

\begin{proof}
Let $\epsilon>0$ and $J\subseteq [0,1]$ be an interval. Then there exists $N_1\in\mathbb{N}$ such that for all $n\geq N_1$ there exists $a_n\in\{1,...,n\}$ and $m_n\in\{0,...,n-a_n\}$ such that
$\bigcup_{k=a_n}^{a_n+m_n}I_k^n\subseteq J.$
Then there exists a natural number $N_2\geq N_1$ such that for all $n\geq N_2$
\begin{equation}
I_n=\bigcup_{k=a_n}^{a_n+m_n}I_k^n\subseteq J \quad\text{and}\quad m(J\setminus I_n)<\frac{\epsilon}{2}.
\end{equation}
So,
\begin{equation}
\left|\int_{J\setminus I_n}w_1^n-w_1\right|
\leq\left|\int_{J\setminus I_n}dt\right|
=m(J\setminus I_n)
<\frac{\epsilon}{2}
\end{equation}
As $n\to\infty$, $m_n\to\infty$. By the Strong Law of Large Numbers, we have
\begin{equation}
\sum_{k=a_n}^{a_n+m_n}\frac{X_k}{m_n}\to w_1 \text{ a.s.}
\end{equation}
So, there exists $N\geq N_2$ such that for all $n\geq N$
\begin{equation}
\left|\sum_{k=a_n}^{a_n+m_n}\frac{X_k}{m_n}-w_1\right|<\frac{\epsilon}{2}\text{ a.s.}
\end{equation}
Fix $n\geq N$. Then with probability 1, since $m(I_n)=\frac{1}{n}$,
\begin{equation}
\left|\int_{I_n} w_1^n-w_1\right|
=\left|\frac{m_n}{n}\sum_{k=a_n}^{a_n+m_n}\frac{X_k-w_1}{m_n}\right|
=m(I_n)\left|\sum_{k=a_n}^{a_n+m_n}\frac{X_k}{m_n}-w_1\right|
\leq\frac{\epsilon}{2}
\end{equation}
Therefore, as $n\to\infty$, almost surely
\begin{equation}
\int_J w_1^n\to w_1m(J).
\end{equation}
\end{proof}

\begin{claim}
Let $h\in L^{\infty}[0,1]$ be a step function. Then almost surely
\begin{equation}
\int_{[0,1]}h w_1^n\to w_1\int_{[0,1]}h.
\end{equation}
\end{claim}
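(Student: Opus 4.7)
The plan is to reduce this immediately to the previous claim by linearity and use the fact that a finite intersection of almost sure events is again almost sure. Write the step function in the standard form $h = \sum_{i=1}^{m} c_i \chi_{J_i}$, where $J_1,\ldots,J_m \subseteq [0,1]$ are intervals and $c_1,\ldots,c_m \in \mathbb{R}$ (one may assume the $J_i$ are disjoint, though this is not needed). By linearity of the integral,
\begin{equation}
\int_{[0,1]} h\, w_1^n = \sum_{i=1}^{m} c_i \int_{J_i} w_1^n, \qquad w_1 \int_{[0,1]} h = \sum_{i=1}^{m} c_i\, w_1\, m(J_i).
\end{equation}

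For each fixed $i \in \{1,\ldots,m\}$, the previous claim provides an event $\Omega_i$ of probability $1$ on which $\int_{J_i} w_1^n \to w_1 m(J_i)$. Set $\Omega_0 = \bigcap_{i=1}^{m} \Omega_i$. Since $m$ is finite, $P(\Omega_0) = 1$. On $\Omega_0$, every one of the $m$ summands converges, so the finite sum converges to the desired limit. This proves the claim.

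I expect no real obstacle here: the step from intervals to step functions is purely an exercise in linearity, and the only subtle point is making sure the null sets outside of which convergence holds are combined over only finitely many intervals, which is automatic since $h$ is a step function with finitely many pieces. The genuine work was already done in the previous claim (which invoked the Strong Law of Large Numbers); the present claim is the first routine step in the standard escalation from intervals to step functions to nonnegative $L^\infty$ functions to general $L^\infty$ functions, as outlined in the paragraph following the statement of Lemma \ref{lem:randomwweights}.
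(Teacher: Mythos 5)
Your proposal is correct and follows essentially the same route as the paper: decompose the step function as a finite linear combination of indicators of intervals, apply the preceding claim to each interval, and combine over the finitely many pieces. Your explicit intersection of the finitely many almost-sure events is a slightly cleaner way of handling the null sets than the paper's $\epsilon$-bookkeeping, but the argument is the same.
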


\begin{proof}
Since $h$ is a bounded step function, there exist finitely many nonempty intervals $J_1,...,J_n$ and $\alpha_1,...,\alpha_n\in\mathbb{R}\setminus\{0\}$ so that $h=\sum_{i=1}^{n}\alpha_i\chi_{J_i}$. Then, by the previous claim, there exists $N$ such that for all $n\geq N$ almost surely
\begin{equation}
\left|\int_{J_i}w_1^n-w_1m(J_i)\right|<\frac{\epsilon}{2\sum_{i=1}^{n}|\alpha_i|}.
\end{equation}
Then with probability 1,
\begin{equation}
\left|\int_{[0,1]}h(w_1^n-w_1)\right|
\leq\sum_{i=1}^{n}\left|\alpha_i\int_{J_i}(w_1^n-w_1)\right|
\leq\sum_{i=1}^{n}|\alpha_i|\frac{\epsilon}{2\sum_{i=1}^{n}|\alpha_i|}
<\epsilon
\end{equation}
This proves the claim.
\end{proof}

\begin{claim}
For $h\in L^{\infty}[0,1]$ with $h\geq 0$, almost surely
\begin{equation}
\int h w_1^n\to w_1\int h.
\end{equation}
\end{claim}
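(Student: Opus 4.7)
The plan is to reduce the general nonnegative $L^{\infty}$ case to the step-function case already handled, using a density argument, and to be careful that only countably many almost-sure events are invoked so that a single probability-one event serves for the whole argument.

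First I would note the uniform bound $0 \le w_1^n \le 1$: indeed, on each interval $I_k^n$, $w_1^n$ equals $X_k \in \{0,1\}$, so $\|w_1^n\|_{\infty} \le 1$ deterministically. Then, since the step functions are dense in $L^1[0,1]$, I would fix a countable sequence of step functions $h_m$ with $\|h - h_m\|_{L^1} \to 0$ as $m \to \infty$. By the previous claim, for each fixed $m$ there is an event $\Omega_m$ with $P(\Omega_m) = 1$ on which $\int h_m w_1^n \to w_1 \int h_m$. Setting $\Omega = \bigcap_{m} \Omega_m$, we still have $P(\Omega) = 1$, and on $\Omega$ convergence holds simultaneously for every $h_m$.

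Next I would run a standard triangle-inequality argument on $\Omega$. Given $\epsilon > 0$, choose $m$ so that $\|h - h_m\|_{L^1} < \epsilon/3$, and then use the decomposition
\begin{equation}
\left|\int h w_1^n - w_1 \int h\right|
\le \left|\int (h - h_m) w_1^n\right|
+ \left|\int h_m w_1^n - w_1 \int h_m\right|
+ w_1 \left|\int (h_m - h)\right|.
\end{equation}
The uniform bound $|w_1^n| \le 1$ and $w_1 \le 1$ control the first and third terms by $\|h - h_m\|_{L^1} < \epsilon/3$ each, and the middle term is smaller than $\epsilon/3$ for all $n$ sufficiently large by the choice of $\Omega$. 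This gives $\limsup_{n} |\int h w_1^n - w_1 \int h| \le \epsilon$ on $\Omega$, and letting $\epsilon \downarrow 0$ along a countable sequence completes the proof.

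The only real subtlety is the quantifier order: we need convergence for all of the (uncountably many) test functions simultaneously on a single probability-one event. The trick is that once we commit to a fixed countable dense family $\{h_m\}$ of step functions in advance, the almost-sure guarantee from the previous claim needs to be applied only countably many times, so their intersection $\Omega$ has full measure. Beyond that bookkeeping, the argument is routine, and nonnegativity of $h$ is not essential here — it is only being kept because the next step of the proof will pass to signed $L^{\infty}$ functions by splitting $h = h^+ - h^-$.
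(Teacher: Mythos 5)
Your proof is correct and follows essentially the same route as the paper's: approximate $h$ by a step function, apply the previous claim to the approximant, and control the error terms via the deterministic bound $0\leq w_1^n\leq 1$ (the paper measures the approximation error in $L^2$ rather than $L^1$, an immaterial difference). Your explicit handling of the countably many almost-sure events is a welcome bit of care that the paper leaves implicit, and your closing remark that nonnegativity is not actually used here is accurate.
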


\begin{proof}
Let $h\in L^{\infty}[0,1]$ with $h\geq 0$. Then there exists a step function $f\in L^{\infty}[0,1]$ such that $||f-h||_2\leq\frac{\epsilon}{2}$, where $\|\cdot\|_{k}$ denotes the $L^k[0,1]$ norm. Then there exists $N\in\mathbb{N}$ such that for all $n\geq N$, almost surely $|\int(w_1^n-w_1)|<\frac{\epsilon}{2(\|f\|_{\infty}\vee 1)}$. Also, since $0\leq w_1^n(t)\leq 1$ a.s., $|w_1^n-w_1|\leq 1$ a.s. for all $t\in[0,1]$. So,
\begin{equation}
\left|\int h(w_1^n-w_1)\right|
\leq\left|\int f(w_1^n-w_1)\right|+\left|\int (h-f)(w_1^n-w_1)\right|
\leq\frac{\epsilon}{2}+||h-f||_2
<\epsilon
\end{equation}
This proves the claim.
\end{proof}

\begin{claim}\label{cla:randomweightsconvtow}
For $h\in L^{\infty}[0,1]$, almost surely
\begin{equation}
\int h w_1^n\to w_1\int h.
\end{equation}
\end{claim}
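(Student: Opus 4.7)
The plan is to reduce the general $L^{\infty}$ case to the non-negative case already handled in the previous claim, using the standard positive/negative part decomposition. Given $h \in L^{\infty}[0,1]$, I write $h = h^+ - h^-$ where $h^+ = \max(h,0)$ and $h^- = \max(-h,0)$. Both $h^+$ and $h^-$ lie in $L^{\infty}[0,1]$ (bounded by $\|h\|_\infty$) and are non-negative, so the previous claim applies to each.

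Concretely, by the previous claim there is an event $A^+$ of probability $1$ on which $\int h^+ w_1^n \to w_1 \int h^+$ and an event $A^-$ of probability $1$ on which $\int h^- w_1^n \to w_1 \int h^-$. On the intersection $A^+ \cap A^-$, which still has probability $1$, linearity of the integral gives
\begin{equation}
\int h w_1^n = \int h^+ w_1^n - \int h^- w_1^n \to w_1 \int h^+ - w_1 \int h^- = w_1 \int h,
\end{equation}
which is exactly the claim.

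There is essentially no obstacle here; the only point requiring any care is that the exceptional null sets coming from applying the previous claim to $h^+$ and to $h^-$ depend on the function, so one must take the intersection of two (not uncountably many) almost sure events and observe that this intersection still has probability $1$. Once this is in hand, combining Claim \ref{cla:randomweightsconvtow} with the identity $w_2^n = 1 - w_1^n$ immediately yields almost sure weak convergence of $w_2^n$ to $w_2$, completing the proof of Lemma \ref{lem:randomwweights}.
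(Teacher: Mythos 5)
Your proof is correct and matches the paper's argument exactly: both decompose $h=h^+-h^-$, apply the non-negative case to each part, and combine on the intersection of the two almost-sure events. Your explicit remark about intersecting the two null sets is a small but welcome point of care that the paper leaves implicit.
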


\begin{proof}
Let $h\in L^{\infty}[0,1]$. Then $h^+,h^-\in L^{\infty}[0,1]$ (where $h^+,h^-\geq 0$ and $h=h^+-h^-$). Then there exists $N\in\mathbb{N}$ such that for all $n\geq N$, almost surely
\begin{equation}
\left|\int_{[0,1]}h^+\left(w_1^n-w_1\right)\right|<\frac{\epsilon}{2}
\text{ and }
\left|\int_{[0,1]}h^-\left(w_1^n-w_1\right)\right|<\frac{\epsilon}{2}.
\end{equation}
Then with probability 1,
\begin{equation}
\left|\int h\left(w_1^n-w_1\right)\right|
\leq\left|\int h^+\left(w_1^n-w_1\right)\right|+\left|\int h^-\left(w_1^n-w_1\right)\right|
<\epsilon.
\end{equation}
\end{proof}

\begin{proof}[Proof of Lemma \ref{lem:randomwweights}]
By Claim \ref{cla:randomweightsconvtow}, we have that $w_1^n$ converges weakly to $w_1$. Then as $w_2^n=1-w_1^n$, we have $w_2^n$ converges weakly to $1-w_1=w_2$. So we have the result.
\end{proof}

\begin{proof}[Proof of Proposition \ref{prop:nonconstantweights}]
Apply Theorem \ref{thm:3.6.2} to get $\lambda_1,...,\lambda_n$ continuous functions and constant weights $w_1,..., w_n\in(0,1)$. Applying Lemma \ref{lem:randomwweights} to $w_j^n$ from (\ref{eqn:defofrandomweights}), we have that $w_j^n$ converges weakly to $w_j$ in $L^1[0,1]$ for $j=1,2$. Now, using Theorem \ref{thm:rs2.4rrodf}, we have that we get the convergence we desire.
\end{proof}

\section{Simulating the Multiple Loewner Equation}\label{simulations}

The Loewner equation yields a conformal map that takes sets in the upper half-plane and maps them down to the real line and for this reason is sometimes referred to as the downward Loewner equation. For a map that does the opposite, we can consider the initial value problem
\begin{equation}\label{eqn:upwardle}
    \partial_t f_t(z)
    =\frac{-2}{f_t(z)-\xi(t)},
    \quad f_0(z)=z.
\end{equation}
We call this the upward Loewner equation and the conformal maps $f_t$ grow sets in the upper half-plane. There is a relationship between the downward and upward Loewner equations. If $g_t$ is the map given by the downward Loewner equation driven by $\lambda:[0,T]\to\mathbb{R}$ and $f_t$ is the map given by the upward Loewner equation driven by $\xi(t)=\lambda(T-t)$, then $f_T=g_T^{-1}$.

The idea of the standard algorithm to simulate the hulls from the Loewner equation uses the upward Loewner equation driven by constant functions (see for instance \cite{bauer}, \cite{kennedy07}, \cite{kennedy09}, or \cite{mr}). For a constant driving function $\xi(t)=c$, the solution to the upward Loewner equation is
\begin{equation}\label{eqn:upwardconstant}
    f_t^c(z)=\sqrt{(z-c)^2-4t}+c.
\end{equation}

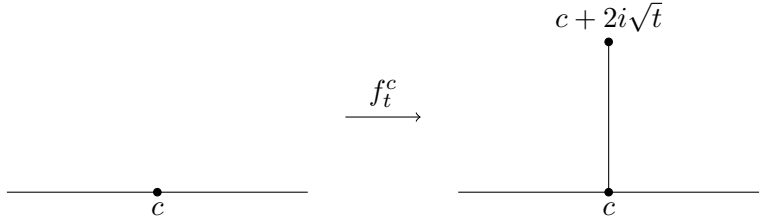
\begin{figure}
\centering
\begin{tikzpicture}

\draw (0,0) -- (4,0);
\draw[fill] (2,0) circle [radius=0.05];
\node[below] at (2,0) {$c$};
\draw[->] (4.5,1) to [out=0,in=180] (5.5,1);
\node[above] at (5,1) {$f_t^c$};

\draw (6,0) -- (10,0);
\draw[fill] (8,0) circle [radius=0.05];
\node[below] at (8,0) {$c$};
\draw (8,0)--(8,2);
\draw[fill] (8,2) circle [radius=0.05];
\node[above] at (8,2) {$c+2i\sqrt{t}$};

\end{tikzpicture}
    \caption{Mapping Up Hull Corresponding to $f_t^c$}
    \label{fig:constantsim}
\end{figure}
\noindent The algorithm for simulating the hull driven by $\lambda:[0,T]\to\mathbb{R}$ with $N+1$ sample points is as follows:
\begin{itemize}
    \item[0.] Compute $\lambda(T)$ and add to hull
    \item[1.] Apply (\ref{eqn:upwardconstant}) with $c=\lambda(T\cdot\frac{N-k}{N})$ to points in hull
    \item[2.] Add $\lambda(T\cdot\frac{N-k}{N})$ to hull
    \item[3.] Repeat steps 1-2 for $k\in\{1,...,N\}$
\end{itemize}

For the multiple Loewner equation, we want to use the same idea as above but our driving function (randomly) oscillates between the driving functions. This is in effect what the proof in Section \ref{rapidrandomoscillation} does to generate the hulls. Let $\lambda_1,\lambda_2:[0,T]\to\mathbb{R}$ be driving functions and $w_1,w_2\in[0,1]$ be constant weights. For $k\in\{0,...,N\}$:

\begin{itemize}
    \item[1.] (Randomly) assign $j_k$ to be either $1$ or $2$ so that $P(j_k=1)=w_1$ and $P(j_k=2)=w_2$
    \item[2.] Define $\lambda(T\cdot\frac{k}{n})=\lambda_{j_k}(T\cdot\frac{k}{n})$
    \item[3.] Repeat steps in previous algorithm
\end{itemize}

We will investigate this algorithm by revisiting the example done in \cite{KNK04} and mentioned here in Section \ref{introtoconjecture} that motivates all of our results. Let $\lambda_1=-1$, $\lambda_2=1$, and $w_1=\frac{1}{2}=w_2$. Recall the hull is given by
\begin{equation}
K_t=\left\{\sqrt{\frac{2\theta_t}{\sin(2\theta_t)}}(\pm\cos\theta_t+i\sin\theta_t)\right\}.
\end{equation}

First, we will control the oscillation by assigning $j_k$ to be 1 when $k$ is odd and 2 when $k$ is even. The simulations for 1,000 and 10,000 oscillations are given in Figures \ref{fig:1000controlledoscillations} and \ref{fig:10000controlledoscillations}. For 1,000 oscillations, the simulated data points are extremely close to the curve. There is a larger spread in the points near the real line since the growth of $f_t^c$ is faster there. For 10,000 oscillations, the simulated data is almost indistinguishable from the curve.

The errors (that is, the maximum distance the data is from the hull) for 1000, 500, 400, 300, 200, 100, 90, 80, 70, 60, 50, 40, 30, 20, 10 controlled oscillations are shown in Figure \ref{fig:errors}, where the blue points correspond to points on the left side (i.e. associated with $\lambda_1$) and the red points correspond to points on the right side (i.e. associated with $\lambda_2$).
Since the last map used in each controlled simulation is $f_t^1$, all of the right sided points are shifted up from their previous positions. This causes more error for these points. On the other hand, the map shifts the left sided points towards the right and reduces the error for these points.
One amazing note is that even for 10 oscillations (11 data points), the error is small enough that simulated points are closer to their respective side than the opposite side (that is, their real parts are on the same side of 0 as their corresponding driving function). Further, for any number of oscillations ($\geq 10$), we could thicken each side of the hull by the error and they would not intersect (up to $T=10$).

Second, we switch to randomly oscillating the driving function. We randomly assign $j_k$ to be 1 or 2 by flipping a fair, virtual coin. In each of Figures \ref{fig:1000randomoscillations} and \ref{fig:10000randomoscillations} are 10 simulated hulls (non-black curves) with 1,000 and 10,000 oscillations (respectively) and the hull (black curves). For 1,000 oscillations, the simulated hulls have the same overall shape (e.g. they approach each other as their imaginary parts increase), but there is significant variation between the curves. For 10,000 oscillations, the simulated hulls are significantly closer to the hull, but there is still variation between the curves. The upshot is that the random hulls are visually a good replacement for the actual hull. Figure \ref{fig:100errors} gives a histogram of 100 simulations of 1,000 random oscillations where left and right sides correspond to the colors blue and red as before.

It appears that the controlled oscillation (i.e. forcing a switch between driving functions) always outperforms the random oscillation. This intuitively makes sense. Say we grow the -1 hull first using $f_t^{-1}$. If we use $f_t^1$ next, the hull corresponding to -1 will be shifted to the right. Instead, if we use $f_t^{-1}$ next, the hull corresponding to -1 will be higher. In the random oscillation case, either of these maps could be used over and over before switching. This would cause the hulls to be higher or more to the left or right than the actual hull. The forced oscillation appears to not allow either side of the hull to get too far away from the actual hull.

\begin{figure}
\begin{minipage}{0.45\textwidth}
    \centering
    \includegraphics[width=0.9\textwidth,height=2in]{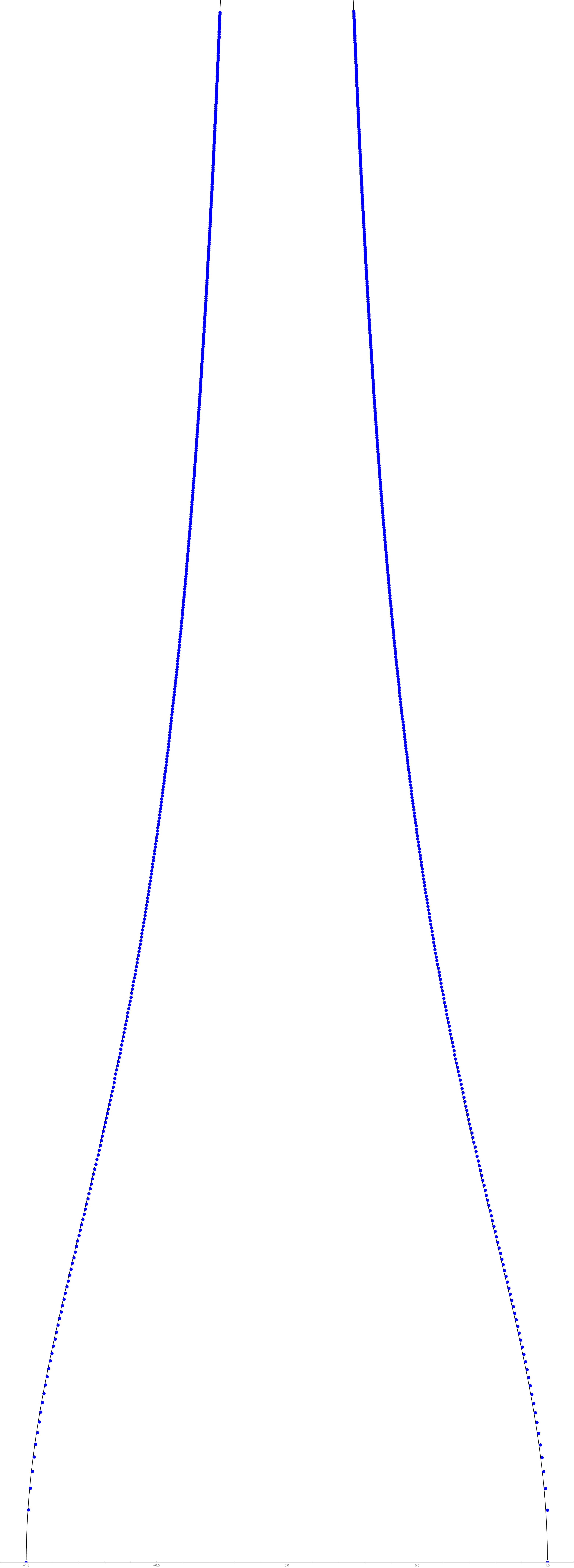}
    \caption{1000 Controlled Oscillations}
    \label{fig:1000controlledoscillations}
\end{minipage}\hfill
\begin{minipage}{0.45\textwidth}
    \centering
    \includegraphics[width=0.9\textwidth,height=2in]{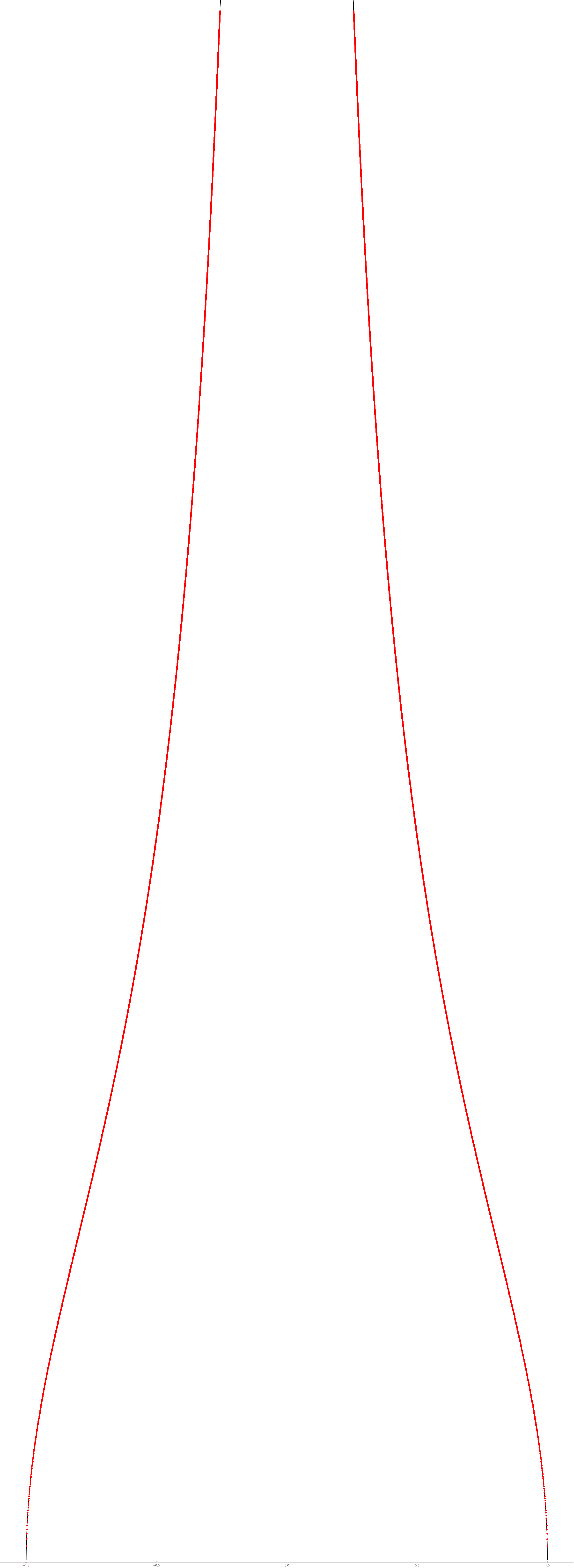}
    \caption{10000 Controlled Oscillations}
    \label{fig:10000controlledoscillations}
\end{minipage}
\end{figure}

\begin{figure}
\begin{minipage}{0.45\textwidth}
    \centering
    \includegraphics[width=.9\textwidth]{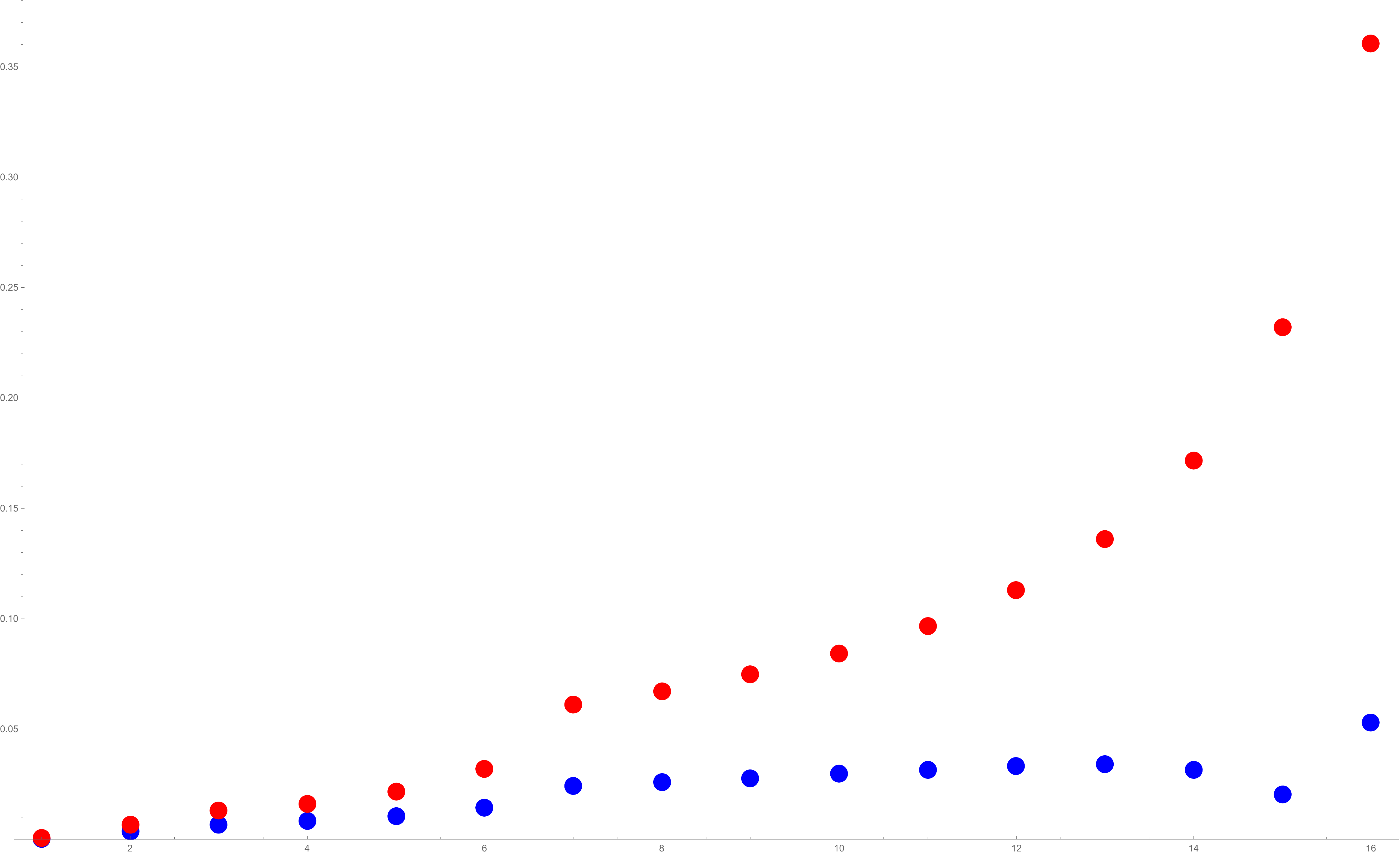}
    \caption{Errors for 1000, 500, 400, 300, 200, 100, 90, 80, 70, 60, 50, 40, 30, 20, 10 Controlled Oscillations}
    \label{fig:errors}
\end{minipage}\hfill
\begin{minipage}{0.45\textwidth}
    \centering
    \includegraphics[width=.9\textwidth]{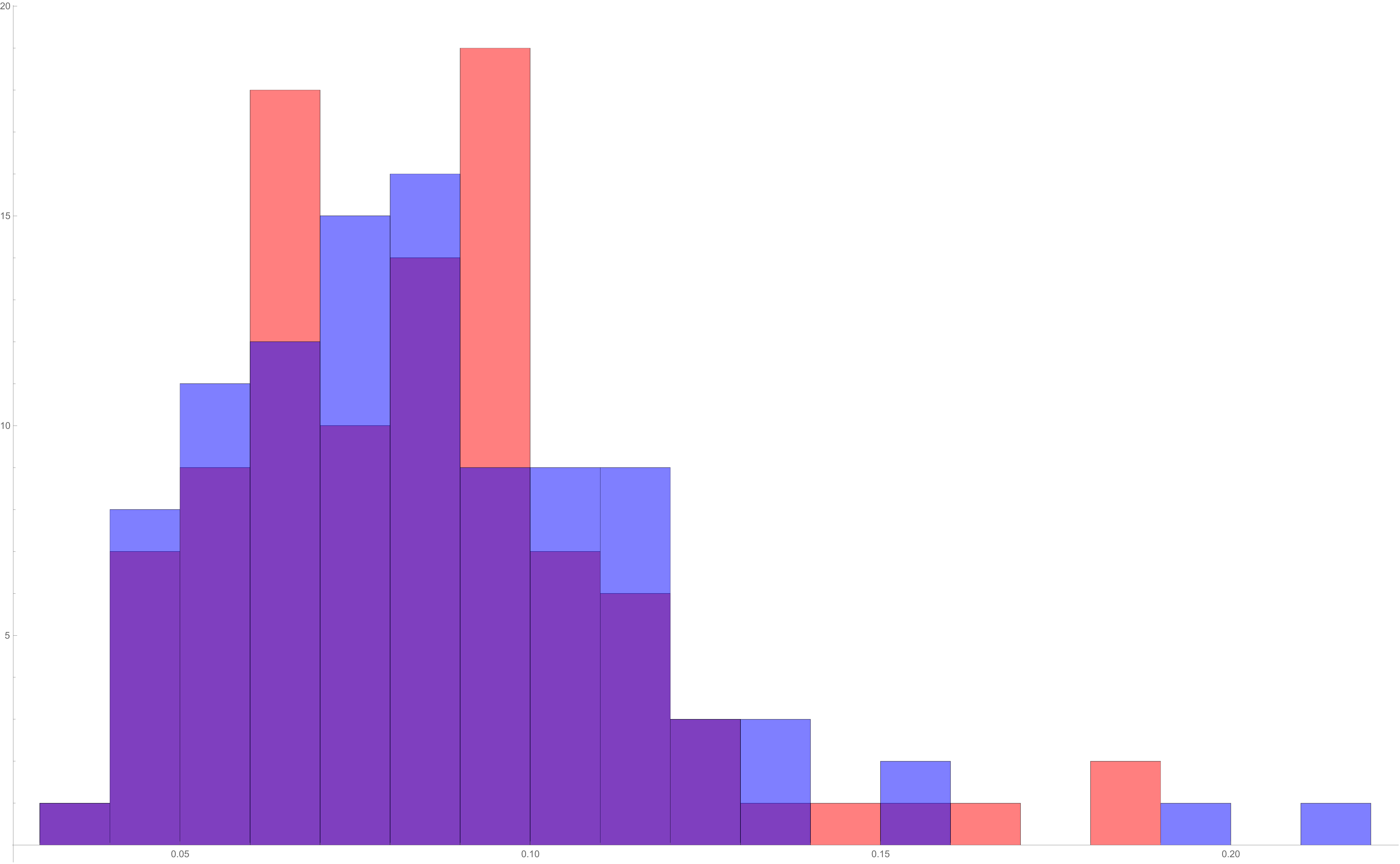}
    \caption{Histogram of 100 Errors for 1000 Random Oscillations}
    \label{fig:100errors}
\end{minipage}
\end{figure}

\begin{figure}
\begin{minipage}{0.45\textwidth}
    \centering
    \includegraphics[width=0.9\textwidth,height=2in]{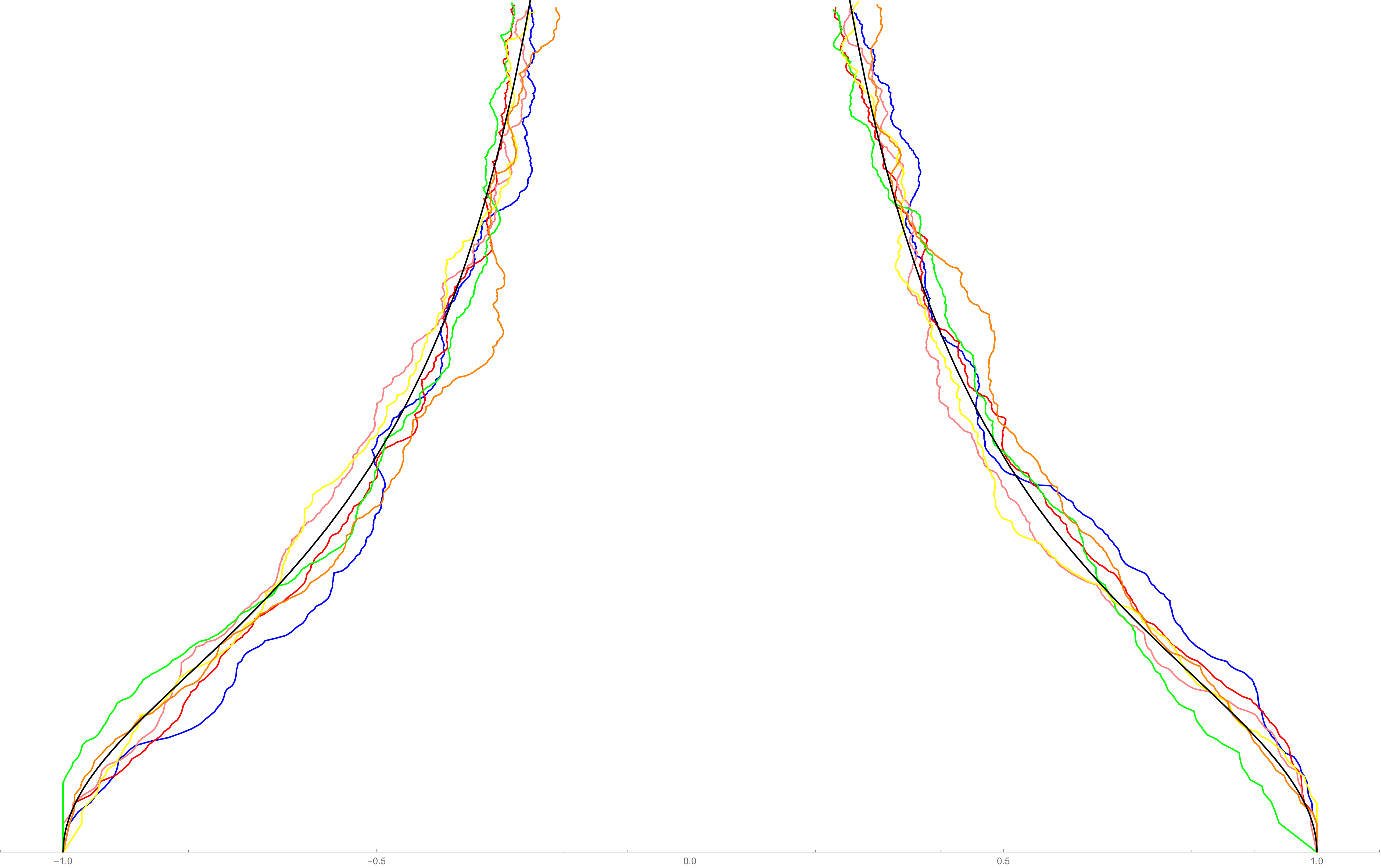}
    \caption{1000 Random Oscillations}
    \label{fig:1000randomoscillations}
\end{minipage}\hfill
\begin{minipage}{0.45\textwidth}
    \centering
    \includegraphics[width=0.9\textwidth,height=2in]{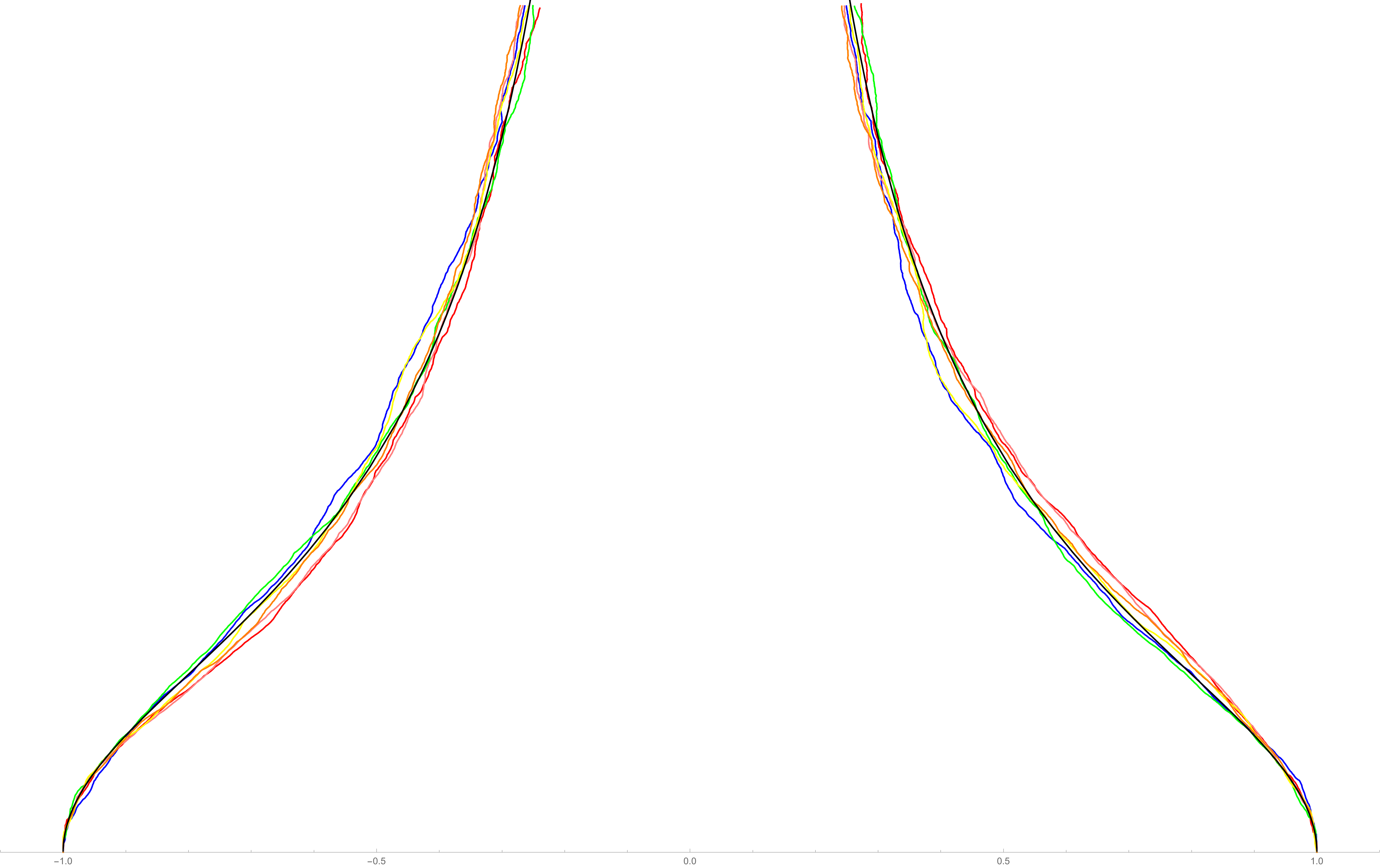}
    \caption{10000 Random Oscillations}
    \label{fig:10000randomoscillations}
\end{minipage}
\end{figure}

\section{Background}\label{background}
We now give a more rigorous introduction to the Loewner equation, hulls, and prime ends. This section gives us the tools and background needed to generalize Theorem 1.1 in \cite{RS14} which we used to prove Proposition \ref{prop:nonconstantweights}. We begin by reintroducing the Loewner equation. Next we discuss hulls in the upper half-plane. This leads to the section on Loewner hulls, which are hulls that can be generated through the Loewner equation driven by a continuous driving function. We then generalize the notion of the tip of a curve to prime ends. This section concludes with results on multiple Loewner hulls, which are hulls that can be generated through the multiple Loewner equation driven by multiple continuous driving functions.

\subsection{Loewner Equation}\label{loewnerequation}

Let $\lambda:[0,T]\to\mathbb{R}$ be continuous. For $z\in\mathbb{H}$, the (single, chordal) Loewner equation is the initial value problem
\begin{equation}\label{eqn:LE}
    \frac{\partial}{\partial t} g_t(z)
    =\frac{2}{g_t(z)-\lambda(t)},
    \quad g_0(z)=z.
\end{equation}
A solution to the Loewner equation exists on some time interval, where the only issue stopping existence is when $g_t(z)=\lambda(t)$. We denote $K_t$ as the points of $\mathbb{H}$ when the solution has failed to exist at some time up to time $t$, that is,
\begin{equation}
    K_t=\{z\in\mathbb{H}:g_s(z)=\lambda(s)\text{ for some }s\in[0,t]\}.
\end{equation}
The function $\lambda$ is called the driving function and $(g_t)_{t\in[0,T]}$ is called a Loewner chain. For $t\in[0,T]$, we call $K_t$ a Loewner hull and we call the family $(K_t)_{t\in[0,T]}$ a Loewner family (see Section \ref{loewnerhulls}). We introduce the Loewner hull moniker to distinguish hulls that can be generated by a single, continuous driving function from hulls that cannot. For example, using $\lambda(t)=c$, we can grow a vertical line starting at $c$. However, two vertical lines at $c_1$ and $c_2$ (with $c_1\not=c_2$) cannot be generated from a single continuous driving function. We discuss this further in Section \ref{loewnerhulls}. The solution $g_t(z)$ is the conformal map from $\mathbb{H}\setminus K_t$ onto $\mathbb{H}$ that satisfies
\begin{equation}
    g_t(z)=z+\frac{2t}{z}+O\left(\frac{1}{z^2}\right)
\end{equation}
near infinity. We define the half-plane capacity of $K_t$, $\hcap(K_t)$, to be $2t$ (see Section \ref{hulls}).

If instead of starting with a continuous function, we started with a Loewner family, we can find a unique driving function satisfying (\ref{eqn:LE}). This gives a one-to-one correspondence between continuous functions and Loewner families of hulls. See \cite{lawler} Lemma 4.2, Theorem 4.6, and the discussion following Example 4.12 for more details.

Now, let $\lambda_1,...,\lambda_n:[0,T]\to\mathbb{R}$ be continuous and $w_1,...,w_n\in L^1[0,T]$ with $\sum_{k=1}^{\infty}w_k(t)\equiv 1$. For $z\in\mathbb{H}$, the multiple Loewner equation is the initial value problem
\begin{equation}\label{eqn:multiLE}
    \frac{\partial}{\partial t} g_t(z)=\sum_{k=1}^{n}\frac{2w_k(t)}{g_t(z)-\lambda_k(t)}\text{ a.e. }t\in [0,T],
    \quad g_0(z)=z.
\end{equation}
This is the sum of weighted Loewner equations, which allows growth of multiple Loewner hulls simultaneously.
Note that (\ref{eqn:multiLE}) holds a.e. $t\in [0,T]$ whereas (\ref{eqn:LE}) holds for all $t\in[0,T]$.

\subsection{Hulls}\label{hulls}

\begin{defn}
A bounded set $K\subseteq\mathbb{H}$ is a hull if $\mathbb{H}\setminus K$ is simply connected.
\end{defn}

\noindent For any hull $K$, there is a unique conformal map $g_K:\mathbb{H}\setminus K\to\mathbb{H}$ with $\lim_{z\to\infty}(g_K(z)-z)=0$, by Riemann mapping theorem (see Proposition 3.36 in \cite{lawler}). The inverse of $g_K$ satisfies the Nevanlinna representation formula
\begin{equation}
    g_K^{-1}(z)=z+\int_{\mathbb{R}}\frac{d\mu_K(t)}{t-z}
\end{equation}
for some finite, nonnegative Borel measure on $\mathbb{R}$ (see Section 3.1 in \cite{schleissinger}).
We now state a very useful result from \cite{RS14}.

\begin{lemma}[3.4 \cite{RS14}]\label{lem:3.2.3}
Let $A$ be a hull.
\begin{itemize}
    \item[(a)] If $\overline{A}\cap\mathbb{R}$ is contained in the closed interval $[a,b]$, then $g_A(\alpha)\leq\alpha$ for every $\alpha\in\mathbb{R}$ with $\alpha<a$ and $g_A(\beta)\geq\beta$ for every $b\in\mathbb{R}$ with $\beta>b$.
    \item[(b)] If the open interval $(a,b)$ is contained in $\mathbb{R}\setminus\overline{A}$, then $|g_A(\beta)-g_A(\alpha)|\leq|\beta-\alpha|$ for all $\alpha,\beta\in(a,b)$.
\end{itemize}
\end{lemma}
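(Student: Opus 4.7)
The overall strategy is to leverage the Nevanlinna representation of $g_A^{-1}$ given above the statement, namely
\begin{equation*}
    g_A^{-1}(w)=w+\int_{\mathbb{R}}\frac{d\mu_A(t)}{t-w},
\end{equation*}
together with the observation that the positivity of $\mu_A$ translates directly into the two inequalities claimed. Both parts (a) and (b) will follow once I pin down where the support of $\mu_A$ lives on $\mathbb{R}$.

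The first, and most important, technical step is to locate $\operatorname{supp}\mu_A$. Since $\mu_A$ is the measure whose Cauchy transform on $\mathbb{H}$ coincides with $g_A^{-1}(w)-w$, its support is exactly the set of $w\in\mathbb{R}$ across which $g_A^{-1}$ fails to continue analytically. By Schwarz reflection, $g_A^{-1}$ continues analytically across every point of $\mathbb{R}$ that lies in the image (under the boundary extension of $g_A$) of a real interval disjoint from $\overline{A}$. In particular, under the hypothesis of (a), $g_A$ extends continuously and real-valuedly on $(-\infty,a)$ and on $(b,\infty)$, mapping these intervals bijectively onto open intervals of the form $(-\infty,a^{*})$ and $(b^{*},\infty)$ respectively. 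These two intervals are therefore disjoint from $\operatorname{supp}\mu_A$, so $\operatorname{supp}\mu_A\subseteq[a^{*},b^{*}]$.

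For part (a), fix $\alpha<a$ and set $w=g_A(\alpha)$, which by the previous paragraph satisfies $w<a^{*}$. Then $t-w>0$ for every $t\in\operatorname{supp}\mu_A$, so the Nevanlinna representation gives
\begin{equation*}
    \alpha=g_A^{-1}(w)=w+\int_{\mathbb{R}}\frac{d\mu_A(t)}{t-w}\geq w=g_A(\alpha),
\end{equation*}
and the symmetric argument with $\beta>b$, $w=g_A(\beta)>b^{*}$, yields $\beta\leq g_A(\beta)$. For part (b), the interval $(a,b)\subset\mathbb{R}\setminus\overline{A}$ is mapped by $g_A$ to an interval $(a^{\#},b^{\#})\subset\mathbb{R}\setminus\operatorname{supp}\mu_A$, on which the Nevanlinna representation may be differentiated term-by-term to give
\begin{equation*}
    (g_A^{-1})'(w)=1+\int_{\mathbb{R}}\frac{d\mu_A(t)}{(t-w)^2}\geq 1.
\end{equation*}
Hence $g_A^{-1}$ is expanding on $(a^{\#},b^{\#})$, so $g_A$ is $1$-Lipschitz on $(a,b)$, giving the bound.

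The main obstacle is the support-localization step: one needs to know that Schwarz reflection really produces the analytic continuation across $\mathbb{R}\setminus\overline{A}$ that I am using, and that the image intervals described in paragraph two are genuinely open and connected. Once that is in place, the remaining steps are short and purely calculational from the Nevanlinna formula.
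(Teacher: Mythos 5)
Your proposal is correct. Note that the paper itself offers no proof of this lemma --- it is quoted verbatim as Lemma~3.4 of \cite{RS14} --- so there is nothing in-paper to compare against; your argument is the standard one via the Nevanlinna/Herglotz representation of $g_A^{-1}$, which is also how the cited literature handles it. The two points you rightly flag as needing justification do go through: the hydrodynamic normalization $g_A(z)-z\to 0$ forces the boundary extension of $g_A$ to map $(-\infty,a)$ and $(b,\infty)$ onto intervals of the form $(-\infty,a^{*})$ and $(b^{*},\infty)$ with $a^{*}\leq b^{*}$ (the images are disjoint, one unbounded below and one unbounded above), and $\operatorname{supp}\mu_A\subseteq[a^{*},b^{*}]$ then follows from Schwarz reflection across these free boundary intervals together with Stieltjes inversion, since $\operatorname{Im}g_A^{-1}(x+iy)\to 0$ locally uniformly on $(-\infty,a^{*})\cup(b^{*},\infty)$. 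With that in place, the sign of $t-w$ on the support gives (a), and $(g_A^{-1})'\geq 1$ off the support gives (b) exactly as you say.
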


\begin{defn}
Let $K$ be a hull. The half-plane capacity of $K$ is defined as
\begin{equation}
    \hcap(K)=\lim_{z\to\infty}z(g_K(z)-z).
\end{equation}
\end{defn}
\noindent Half-plane capacity is a real value relating $g_K$ and $K$. Part of the importance of the half-plane capacity is captured in the following lemma from \cite{RS14}.

\begin{lemma}[3.1 \cite{RS14}]\label{lem:3.2.2}
Let $A$, $A_1$, $A_2$ be hulls.
\begin{itemize}
    \item[(a)] If $A_1\cup A_2$ and $A_1\cap A_2$ are hulls, then
    \begin{equation}
        \hcap(A_1)+\hcap(A_2)\geq\hcap(A_1\cup A_2)+\hcap(A_1\cap A_2)
    \end{equation}
    \item[(b)] If $A_1\subset A_2$, then $\hcap(A_2)=\hcap(A_1)+\hcap(g_{A_1}(A_2\setminus A_1))\geq\hcap(A_1)$.
    \item[(c)] If $A_1\cup A_2$ is a hull and $A_1\cap A_2=\emptyset$, then $\hcap(g_{A_1}(A_2))\leq\hcap(A_2)$.
    \item[(d)] If $c>0$, then $\hcap(cA)=c^2\hcap(A)$ and $\hcap(A\pm c)=\hcap(A)$.
\end{itemize}
\end{lemma}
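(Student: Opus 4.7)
The plan is to split the statement into four pieces: (d) and (b) yield to a direct computation with the expansion of $g_A$ at infinity; (c) is a corollary of (a) and (b); and (a), the submodularity of $\hcap$, is the genuine content and where the real work lies.

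I would dispatch (d) and (b) first. For (d), observe that $z\mapsto c\,g_A(z/c)$ is a conformal map from $\mathbb{H}\setminus cA$ onto $\mathbb{H}$ with expansion $z + c^2\hcap(A)/z + O(1/z^2)$, so uniqueness of the normalized mapping-out function forces $g_{cA}(z)=c\,g_A(z/c)$ and hence $\hcap(cA)=c^2\hcap(A)$; similarly $g_{A\pm c}(z)=g_A(z\mp c)\pm c$ gives $\hcap(A\pm c)=\hcap(A)$. For (b), the composition $g_{g_{A_1}(A_2\setminus A_1)}\circ g_{A_1}$ is a normalized conformal map from $\mathbb{H}\setminus A_2$ onto $\mathbb{H}$, so by uniqueness it coincides with $g_{A_2}$; matching $1/z$ coefficients yields $\hcap(A_2)=\hcap(A_1)+\hcap(g_{A_1}(A_2\setminus A_1))$, and monotonicity follows because $\hcap$ is non-negative (for instance, from the Nevanlinna representation $g_A^{-1}(z)=z+\int d\mu_A(t)/(t-z)$ already recalled, which gives $\hcap(A)=\mu_A(\mathbb{R})$).

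The main obstacle is (a). My approach is probabilistic, via the identity
\begin{equation}
\hcap(A)=\lim_{y\to\infty} y\,\mathbb{E}^{iy}\bigl[\text{Im}(B_{\tau_A})\bigr],
\end{equation}
where $B$ is planar Brownian motion started at $iy$ and $\tau_A$ is its first exit from $\mathbb{H}\setminus A$. Fix $y$ and couple the four stopping times $\tau_1,\tau_2,\tau_\cup,\tau_\cap$ associated with $A_1,A_2,A_1\cup A_2,A_1\cap A_2$ to a single Brownian path, noting $\tau_\cup\leq \tau_1,\tau_2\leq \tau_\cap$. Split on whether $B_{\tau_\cup}$ lies in $A_1\setminus A_2$, in $A_2\setminus A_1$, in $A_1\cap A_2$, or on $\mathbb{R}$: in the first two cases the path must continue past $\tau_\cup$ to locate $A_{3-i}$ and $A_1\cap A_2$, while the other two cases are trivial. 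Using the strong Markov property at $\tau_\cup$ together with the martingale property of $\text{Im}(B_t)$, the difference
\begin{equation}
\mathbb{E}\bigl[\text{Im}(B_{\tau_1})+\text{Im}(B_{\tau_2})\bigr]-\mathbb{E}\bigl[\text{Im}(B_{\tau_\cup})+\text{Im}(B_{\tau_\cap})\bigr]
\end{equation}
rewrites as the expectation of a manifestly non-negative quantity measuring the imaginary part accrued between $\tau_\cup$ and the second hit. The hardest bookkeeping will be making this path decomposition clean enough to identify the sign, since a naive pointwise comparison of the four paths fails; a possible fallback would be to run Loewner evolutions generating $A_1,A_2$ and compare $t$-derivatives of $\hcap$, but the union/intersection structure does not interact transparently with a single growing family.

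Finally, (c) drops out from (a) and (b). Since $A_1\cap A_2=\emptyset$, part (b) applied to $A_1\subset A_1\cup A_2$ gives $\hcap(A_1\cup A_2)=\hcap(A_1)+\hcap(g_{A_1}(A_2))$, while part (a), with $\hcap(\emptyset)=0$, gives $\hcap(A_1)+\hcap(A_2)\geq \hcap(A_1\cup A_2)$. Subtracting $\hcap(A_1)$ from both chains yields $\hcap(A_2)\geq \hcap(g_{A_1}(A_2))$, completing the plan.
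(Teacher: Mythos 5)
Your plan is essentially correct, but it is worth noting that the paper itself offers no proof of this lemma: it is imported verbatim as Lemma~3.1 of \cite{RS14}, which in turn rests on the standard half-plane-capacity toolkit in Lawler's book. What you have written is a reconstruction of those standard arguments, and all four parts check out: (d) and (b) follow exactly as you say from uniqueness of the hydrodynamically normalized map and matching of $1/z$ coefficients (with $\hcap(A)=\mu_A(\mathbb{R})\geq 0$ from the Nevanlinna representation), and (c) does drop out of (a) and (b) once one adopts the harmless convention $g_\emptyset=\mathrm{id}$, $\hcap(\emptyset)=0$. For (a), your path decomposition at $\tau_\cup$ works, but the phrase ``martingale property of $\mathrm{Im}(B_t)$'' hides the one step that needs care: $\mathrm{Im}(B_{t\wedge\tau_{\mathbb{R}}})$ is a non-negative local martingale that is \emph{not} uniformly integrable, so optional stopping gives only the supermartingale inequality $\mathbb{E}\bigl[\mathrm{Im}(B_{\tau_\cap})\mid\mathcal{F}_{\tau_2}\bigr]\leq\mathrm{Im}(B_{\tau_2})$ for $\tau_2\leq\tau_\cap$ (via truncation at a large ball and Fatou). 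Fortunately this is exactly the direction you need on the events $\{B_{\tau_\cup}\in A_i\setminus A_{3-i}\}$, where the $A_i$-term cancels against the $\tau_\cup$-term and the remaining comparison is between $\tau_{3-i}$ and $\tau_\cap$; the residual events (exit through $\mathbb{R}$, or through $\overline{A_1}\cap\overline{A_2}$) contribute nonpositively by the same supermartingale bound. An equivalent and slightly cleaner packaging of the same idea is deterministic: set $u_A(z)=\mathrm{Im}(z)-\mathrm{Im}(g_A(z))=\mathbb{E}^z[\mathrm{Im}(B_{\tau_A})]$, check that $u_{A_1}+u_{A_2}-u_{A_1\cup A_2}-u_{A_1\cap A_2}$ is harmonic on $\mathbb{H}\setminus(A_1\cup A_2)$ with nonnegative boundary values (using monotonicity of $u$), apply the maximum principle, and let $y\to\infty$ along $z=iy$. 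Either route is fine; yours buys nothing extra over the textbook proof but is a faithful account of it.
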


Remark 3.50 in \cite{lawler} gives that there exists $M>0$ so that for any hull $K$,
\begin{equation}
    \text{diam}(g_K(K)) < M\text{diam}(K).
\end{equation}
In order to further discuss diam$g_K(K)$, we introduce some notation.

\begin{defn}
Let $A$ and $B$ be hulls or a finite union of hulls. Let $g_B:\mathbb{H}\setminus B\to\mathbb{H}$ be the hydrodynamically normalized conformal map. Define $g_B^+(A)=0$ if $A\subseteq\text{int}(B)$ and otherwise
\begin{equation}
    g_{B}^+(A)
    =\max\left\{\lim_{n\to\infty}g_{B}(z_n):(z_n)_{n=1}^{\infty}\subseteq\mathbb{H}\setminus B, z_n\to z\in A,g_{B}(z_n)\to x\in\mathbb{R}\right\}.
\end{equation}
Similarly, define $g_B^-(A)=0$ if $A\subseteq\text{int}(B)$ and otherwise
\begin{equation}
    g_{B}^-(A)
    =\min\left\{\lim_{n\to\infty}g_{B}(z_n):(z_n)_{n=1}^{\infty}\subseteq\mathbb{H}\setminus B, z_n\to z\in A,g_{B}(z_n)\to x\in\mathbb{R}\right\}.
\end{equation}
\end{defn}

This means
\begin{equation}\label{eqn:lawlerremark3.50}
    g_K^+(K)-g_K^-(K)=\text{diam}(g_K(K))\leq M\text{diam}(K)
\end{equation}

\subsection{Loewner Hulls}\label{loewnerhulls}

As previously mentioned, not all hulls can be grown from the Loewner equation driven by a continuous function, for instance a tree or a disconnected set. We will call these special hulls Loewner hulls.

\begin{defn}
We say that a family of hulls, $(K_t)_{t\in[0,T]}$ is a Loewner family if for all $t\in[0,T]$, $\hcap(K_t)=2t$, $K_s\subset K_t$ for $s<t$, and for all $\epsilon>0$ there exists $\delta>0$ so that for $t\in [0,T-\delta]$ there is a bounded, connected set $S\subset\mathbb{H}\setminus K_t$ with diam$(S)<\epsilon$ where $S$ disconnects $K_{t+\delta}\setminus K_t$ from infinity in $\mathbb{H}\setminus K_t$.
\end{defn}

\noindent The above definition is motivated by Theorem 2.6 of \cite{lsw} which states that $(K_t)_{t\in[0,T]}$ is a Loewner family if and only if there exists $\lambda:[0,T]\to\mathbb{R}$ continuous so that $(K_t)_{t\in[0,T]}$ is driven by $\lambda$. Furthermore, $\lambda(t)$ is the point in $\bigcap_{\epsilon>0}g_t(K_{t+\epsilon}\setminus K_t)$. We will say that two Loewner families $(K_t)_{t\in[0,T]}$ and $(L_s)_{s\in[0,S]}$ are disjoint if $\overline{K_T}\cap \overline{L_S}=\emptyset$, where the closure is taken in $\overline{\mathbb{H}}$. Similarly, if $A$ and $B$ are hulls, we say they are disjoint if $\overline{A}\cap \overline{B}=\emptyset$. When there is no risk of confusion, we denote Loewner families simply by $K_t$, dropping the index on $t$.

\begin{defn}
We say that the hull $K$ with $\hcap(K)=2T$ is a Loewner hull if there is a Loewner family $K_t$ with $K_T=K$.
\end{defn}

\noindent The relationship between a Loewner family and its driving function is very deep. We exemplify this relationship by stating a few results that will prove useful.

\begin{lemma}[3.3 (a) \cite{chenrohde}]\label{lem:cr3.3a}
Let $K_t$ be a Loewner family driven by $\lambda$. If $\lambda(t)\in [a,b]$ for all $t\in[0,T]$, then $\overline{K_T}\subset[a,b]\times\mathbb{R}$.
\end{lemma}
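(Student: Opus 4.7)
The plan is to show directly, for every $z_0 \in \mathbb{H}$ with $\operatorname{Re}(z_0) \notin [a,b]$, that $z_0 \notin K_T$. Since the closed vertical strip $[a,b] \times \mathbb{R}$ is closed in $\overline{\mathbb{H}}$ and $K_T$ is bounded, this immediately yields $\overline{K_T} \subset [a,b] \times \mathbb{R}$. So the entire argument reduces to an ODE analysis of a single trajectory of the Loewner equation.

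Fix $z_0 = x_0 + i y_0 \in \mathbb{H}$ with $x_0 < a$ (the case $x_0 > b$ is symmetric). Let $[0, T^*)$ be the maximal interval on which $g_t(z_0)$ exists, and write $g_t(z_0) = x_t + i y_t$. Separating the real and imaginary parts of (\ref{eqn:LE}) gives
\begin{equation*}
    \dot{x}_t = \frac{2(x_t - \lambda(t))}{(x_t - \lambda(t))^2 + y_t^2}, \qquad \dot{y}_t = \frac{-2 y_t}{(x_t - \lambda(t))^2 + y_t^2}.
\end{equation*}
At $t = 0$, $x_0 < a \leq \lambda(0)$, so $\dot{x}_0 < 0$. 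A bootstrap argument then shows $x_t < a$ throughout $[0, T^*)$: set $\tau := \inf\{t \in [0, T^*) : x_t \geq a\}$, and suppose for contradiction $\tau < T^*$. On $[0, \tau)$ we have $x_s < a \leq \lambda(s)$, hence $\dot{x}_s < 0$, so $x_s$ is strictly decreasing and $x_s \leq x_0 < a$. By continuity, $x_\tau = \lim_{s \to \tau^-} x_s \leq x_0 < a$, contradicting $x_\tau = a$.

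Having $x_t < a$ on $[0, T^*)$ gives the uniform lower bound $|g_t(z_0) - \lambda(t)| \geq \lambda(t) - x_t \geq a - x_0 > 0$. This bound keeps the right-hand side of the Loewner equation bounded on $[0, T^*)$, so standard ODE theory extends the solution past $T^*$; hence $T^* > T$ and $z_0 \notin K_T$. Running the symmetric argument on the side $x_0 > b$ then yields $K_T \subset \{z \in \mathbb{H} : a \leq \operatorname{Re}(z) \leq b\}$, and taking closures finishes the proof.

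The only nontrivial piece is the bootstrap step, which is routine. Lemma \ref{lem:3.2.3}(a) packages related information about $g_A$ on real points outside $\overline{K_T} \cap \mathbb{R}$, but it goes in the opposite direction and would require first knowing the real trace of $\overline{K_T}$; the direct ODE argument has the advantage of handling real and non-real $z_0$ in a single pass.
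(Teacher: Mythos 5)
Your argument is correct. Note that the paper itself offers no proof of this lemma --- it is quoted verbatim as Lemma 3.3(a) of \cite{chenrohde} --- so there is nothing internal to compare against; what you have written is a self-contained replacement for the citation, and it is the standard one. The decomposition into real and imaginary parts is right ($\dot{x}_t = 2(x_t-\lambda(t))/|g_t(z_0)-\lambda(t)|^2$, $\dot{y}_t = -2y_t/|g_t(z_0)-\lambda(t)|^2$), the bootstrap showing $x_t \leq x_0 < a$ on the maximal interval is sound (on $[0,\tau)$ the sign of $\dot{x}$ is forced by $x_s < a \leq \lambda(s)$, and left-continuity at $\tau$ gives the contradiction), and the uniform lower bound $|g_t(z_0)-\lambda(t)| \geq a - x_0 > 0$ is exactly the criterion that prevents $z_0$ from being swallowed, since the hull is by definition the set of points where the trajectory reaches the singularity. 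Passing from $K_T \subset \{a \leq \operatorname{Re} z \leq b\}$ to the closure is immediate because the strip is closed. Your closing observation that Lemma \ref{lem:3.2.3}(a) runs in the wrong direction for this purpose is also apt: that lemma constrains $g_A$ given information about $\overline{A}\cap\mathbb{R}$, whereas here one must constrain the hull given information about the driving function, and the trajectory-by-trajectory ODE analysis is the natural tool. The only cosmetic point is that ``extends the solution past $T^*$'' should be read as: the solution extends to all of $[0,T]$ with the singularity never approached, hence $z_0 \notin K_T$ under the paper's definition of the hull.
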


\begin{lemma}[4.13 \cite{lawler}]\label{lem:lawler4.13}
Let $K_t$ be a Loewner family generated by $\lambda$ with Loewner chain $g_t$. Define $R_t=\max\{\sqrt{t},\sup\{|\lambda(s)|:0\leq s\leq t\}\}$. Then $\sup\{|z|:z\in K_t\}\leq 4R_t$. In fact, if $|z|>4R_t$, then $|g_s(z)-z|\leq R_t$ for $0\leq s\leq t$.
\end{lemma}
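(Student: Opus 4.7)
The plan is to prove the second, stronger statement ``if $|z|>4R_t$ then $|g_s(z)-z|\le R_t$ for $0\le s\le t$'' first and then derive the hull bound as an easy corollary. Fix $z\in\mathbb{H}$ with $|z|>4R_t$ and set
\[
\tau=\sup\{s\in[0,t]:g_u(z)\text{ is defined and }|g_u(z)-z|\le R_t\text{ for all }u\in[0,s]\}.
\]
Since $g_0(z)=z$, we have $\tau>0$, and $\tau$ is achieved by continuity of $s\mapsto g_s(z)$ on its domain. The goal is to show $\tau=t$.

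For any $u<\tau$, I would estimate the denominator in the Loewner ODE: from $|g_u(z)-z|\le R_t$ and $|z|>4R_t$ one gets $|g_u(z)|\ge 3R_t$, and since $|\lambda(u)|\le R_t$ by definition of $R_t$, the triangle inequality yields
\[
|g_u(z)-\lambda(u)|\ge|g_u(z)|-|\lambda(u)|\ge 3R_t-R_t=2R_t.
\]
Plugging this into the Loewner equation (\ref{eqn:LE}) bounds $|\partial_u g_u(z)|\le 1/R_t$, and integrating over $[0,s]$ for any $s\le\tau$ gives $|g_s(z)-z|\le s/R_t$.

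Now comes the bootstrap. Since $R_t\ge\sqrt t$, we have $t/R_t\le R_t$, so for every $s\le\tau$ the stronger estimate $|g_s(z)-z|\le s/R_t\le t/R_t\le R_t$ holds, with strict inequality in the middle whenever $s<t$. If $\tau<t$, then continuity at $s=\tau$ and maximality of $\tau$ force $|g_\tau(z)-z|=R_t$, contradicting $|g_\tau(z)-z|\le \tau/R_t<t/R_t\le R_t$. Hence $\tau=t$, $g_s(z)$ is defined on all of $[0,t]$, and the estimate $|g_s(z)-z|\le R_t$ holds throughout. The main (mild) obstacle here is just making the bootstrap rigorous: one has to be sure that $g_u(z)$ stays defined on $[0,\tau]$, which is exactly why the $2R_t$ lower bound on $|g_u-\lambda|$ is needed (it rules out blow-up).

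Finally, to get $\sup\{|z|:z\in K_t\}\le 4R_t$, suppose $|z|>4R_t$. By what was just shown, $g_s(z)$ exists for all $s\in[0,t]$ and $|g_s(z)-\lambda(s)|\ge 2R_t>0$, so the solution never hits the singularity on $[0,t]$; by definition of $K_t$, this means $z\notin K_t$. Therefore any $z\in K_t$ must satisfy $|z|\le 4R_t$, which completes the proof.
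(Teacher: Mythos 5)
The paper does not prove this lemma—it is quoted verbatim as Lemma 4.13 of \cite{lawler}—and your argument is correct and is essentially the standard (Lawler's) proof: bound the denominator $|g_u(z)-\lambda(u)|\ge 2R_t$ while $|g_u(z)-z|\le R_t$, integrate to get $|g_s(z)-z|\le s/R_t\le R_t$, and close the bootstrap using $t\le R_t^2$. The continuation past $\tau$ and the deduction $z\notin K_t$ are both handled properly, so there is nothing to add.
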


\noindent Beyond the driving function, Loewner families can only grow in particular ways.

\begin{defn}[\cite{lawler}]
Let $K_t$ be a Loewner family. We call $z$ a $t$-accessible point if $z\in K_t\setminus\cup_{s<t} K_s$ and there exists a continuous curve $\gamma:[0,1]\to\mathbb{C}$ with $\gamma(0)=z$ and $\gamma(0,1]\subseteq\mathbb{H}\setminus K_t$.
\end{defn}

\begin{prop}[4.26 \cite{lawler}]\label{pro:lawler4.26}
If $t>0$ and $z$ is a $t$-accessible point, then there is a strictly increasing sequence $s_j\uparrow t$ and a sequence of $s_j$-accessible points $z_j$ with $z_j\to z$.
\end{prop}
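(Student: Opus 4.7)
The plan is to first show that $z$ is close to the earlier hulls ($d(z, K_s) \to 0$ as $s \uparrow t$) and then exhibit $s_j$-accessible points near $z$ using the pocket structure coming from the Loewner family definition. For the first step, since $z$ is $t$-accessible we have $z \in K_t$ but $z \notin K_s$ for $s < t$, so the Loewner equation at $z$ runs up to exactly time $t$ and forces $g_s(z) \to \lambda(t) \in \mathbb{R}$ as $s \uparrow t$. If instead $d(z, K_{s_k}) \geq c > 0$ along some $s_k \uparrow t$, then each $g_{s_k}$ is conformal on the fixed disk $B(z, c/2)$, and by Carath\'eodory convergence $g_{s_k}(z) \to g_{K_{t^-}}(z) \in \mathbb{H}$, contradicting convergence to the real point $\lambda(t)$.

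For the second step I would use the Loewner family definition: for each $\epsilon > 0$ there is $\delta > 0$ so that $K_t \setminus K_{t-\delta}$ is enclosed in $\mathbb{H} \setminus K_{t-\delta}$ by a connected set $S$ of diameter less than $\epsilon$. Since $z$ lies in the pocket bounded by $S$ and the adjacent portion of $\partial K_{t-\delta}$, and since the accessibility curve $\gamma$ from $z$ must cross $S$ to reach infinity, I pick any $\gamma(r)$ inside the pocket and let $w_\delta$ be a nearest point of $K_{t-\delta}$ to $\gamma(r)$. The segment $[\gamma(r), w_\delta]$ stays in $\mathbb{H} \setminus K_{t-\delta}$, so $w_\delta$ is accessible from outside at time $t-\delta$ and, since $\mathbb{H} \setminus K_{t-\delta} \subseteq \mathbb{H} \setminus K_{\sigma_\delta}$ for the arrival time $\sigma_\delta := \inf\{s : w_\delta \in K_s\} \leq t-\delta$, also at time $\sigma_\delta$; thus $w_\delta$ is $\sigma_\delta$-accessible. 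Step 1 plus the smallness of the pocket gives $w_\delta \to z$, and the condition $z \notin K_s$ for $s < t$ forces $\sigma_\delta \to t$ (any subsequential limit strictly below $t$ would place $z$ in the closed set $K_{t-c}$). A strictly-increasing subsequence of the $\sigma_\delta$'s then provides the required $s_j$ and $z_j$.

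The main obstacle is establishing $\mathrm{diam}(\text{pocket}) \to 0$ as $\delta \to 0$, which is what powers $w_\delta \to z$. Although $\mathrm{diam}(S) < \epsilon$ by construction, the pocket is also bounded by an arc of $\partial K_{t-\delta}$ whose length could in principle be large if the boundary of $K_{t-\delta}$ is intricate. The correct control comes from transferring to the image: by Lemma~\ref{lem:lawler4.13} applied to the shifted driving function $\lambda(t-\delta+\cdot)$, the image hull $g_{t-\delta}(K_t \setminus K_{t-\delta})$ sits inside a ball of radius $4\max\{\sqrt{\delta},\sup_{u\in[0,\delta]}|\lambda(t-\delta+u)-\lambda(t-\delta)|\}$ around $\lambda(t-\delta)$, which shrinks to $0$ by continuity of $\lambda$. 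One then needs a distortion argument (using the square-root behavior of the inverse Loewner map near its tip) to push this smallness back into the preimage. An alternative route avoiding the distortion issue is to work with prime ends directly: define $z_s$ as the limit of $g_s^{-1}(\lambda(s)+i\eta)$ as $\eta\to 0^+$ (which is an $s$-accessible point whenever the limit exists, i.e.\ for a dense set of $s$) and show $z_s \to z$ using boundary continuity of the inverse map combined with $g_s(z) \to \lambda(t)$.
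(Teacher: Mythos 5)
The paper does not prove this proposition; it is imported verbatim from \cite{lawler} (Proposition 4.26 there), so there is no in-paper argument to compare against. Judged on its own, your proof is essentially correct, and the ``main obstacle'' you flag at the end is not actually an obstacle. Once Step 1 gives $d(z,K_{t-\delta})\to 0$, the nearest-point distance is controlled by the triangle inequality alone:
\begin{equation}
    |w_\delta-\gamma(r)|=d\bigl(\gamma(r),\overline{K_{t-\delta}}\bigr)\leq|\gamma(r)-z|+d(z,K_{t-\delta}),
\end{equation}
so choosing $r=r(\delta)\to 0$ yields $w_\delta\to z$ with no control whatsoever on the diameter of the pocket; the local-growth property and the crosscut $S$ are never needed, and the entire last paragraph (distortion estimates, prime-end detour) can be deleted. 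Your remaining steps are sound: the open segment $(\gamma(r),w_\delta)$ avoids $\overline{K_{t-\delta}}\supseteq K_{\sigma_\delta}$ and lies in $\mathbb{H}$ by convexity, so $w_\delta$ is $\sigma_\delta$-accessible; and if $\sigma_{\delta_k}\to\sigma<t$ then $w_{\delta_k}\in K_{\sigma'}$ for any $\sigma'\in(\sigma,t)$ and large $k$, whence $z\in K_{\sigma'}$ because $K_{\sigma'}$ is relatively closed in $\mathbb{H}$ and $z\in\mathbb{H}$ --- contradicting $z\notin\bigcup_{s<t}K_s$.

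Three small points to make the write-up airtight. First, in Step 1 the statement $g_s(z)\to\lambda(t)$ is a genuine limit, not just a liminf; this requires the standard oscillation argument (if $|{\rm Re}\,g_s(z)-\lambda(s)|$ returned infinitely often to a level $a/2$ after dipping below $a/4$, each excursion would cost a definite amount of time, using $|\partial_s{\rm Re}\,g_s(z)|\leq 2/|g_s(z)-\lambda(s)|$ and uniform continuity of $\lambda$). Second, for the Carath\'eodory contradiction you need $z$ to lie in the kernel of the decreasing domains $\mathbb{H}\setminus K_s$ with respect to $\infty$; this follows because $B(z,c')$ meets $\mathbb{H}\setminus K_t$ (via the accessibility curve), which is open, connected, unbounded, and contained in every $\mathbb{H}\setminus K_s$. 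Third, take $w_\delta$ to be a nearest point of $\overline{K_{t-\delta}}$ and observe that for small $\delta$ it lies in $\mathbb{H}$ (hence in the relatively closed set $K_{t-\delta}$) simply because it converges to $z\in\mathbb{H}$; this disposes of the possibility that the nearest point lands on $\mathbb{R}$ and ensures $\sigma_\delta=T_{w_\delta}$ is well defined with $w_\delta\in K_{\sigma_\delta}\setminus\bigcup_{s<\sigma_\delta}K_s$.
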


\begin{prop}[4.27 \cite{lawler}]\label{pro:lawler4.27}
For each $t>0$, there is at most one $t$-accessible point. Also, the boundary of the time $t$ hull is contained in the closure of the set of $s$-accessible points for $s\leq t$.
\end{prop}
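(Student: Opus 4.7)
The plan is to prove both halves of the proposition by relating $t$-accessible points to the boundary behavior of the conformal map $g_t$ via prime ends of $\mathbb{H}\setminus K_t$. For uniqueness, let $z$ be $t$-accessible with access curve $\gamma:[0,1]\to\mathbb{C}$, $\gamma(0)=z$, $\gamma((0,1])\subset\mathbb{H}\setminus K_t$. Since $z\in K_t\setminus\bigcup_{s<t}K_s$, the orbit $g_s(z)$ is defined on $[0,t)$, and by the standard blow-up characterization for the Loewner ODE one has $g_s(z)\to\lambda(t)$ as $s\uparrow t$. The key intermediate step is to upgrade this to $\lim_{u\to 0^+}g_t(\gamma(u))=\lambda(t)$: for each intermediate $s<t$, $g_s$ is continuous at $z$ because $z\notin K_s$, so $g_s(\gamma(u))$ is close to $g_s(z)$, hence close to $\lambda(t)$, for $u$ small and $s$ close to $t$; a distortion estimate for the remaining Loewner flow from time $s$ to time $t$ then keeps the image pinned near $\lambda(t)$.

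Granting this limit, uniqueness follows from a prime-end argument. Because $\mathbb{H}\setminus K_t$ is simply connected, its prime ends are in bijection with $\mathbb{R}$ via $g_t^{-1}$. If two distinct $t$-accessible points $z_1\neq z_2$ existed, the curves $g_t\circ\gamma_i$ would be disjoint paths in $\mathbb{H}$ both terminating at the single real point $\lambda(t)$; together with a small semicircle around $\lambda(t)$ they would bound a Jordan domain $D\subset\mathbb{H}$, and $g_t^{-1}(D)$ would be a Jordan subdomain of $\mathbb{H}\setminus K_t$ whose boundary meets $K_t$ only at the prime end sitting over $\lambda(t)$, forcing $z_1=z_2$.

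For the density assertion, fix $w\in\partial K_t\cap\mathbb{H}$ and set $s_w=\inf\{s\in[0,t]:w\in K_s\}$; closedness of each $K_s$ together with $K_s\subset K_{s'}$ for $s<s'$ yields $w\in K_{s_w}\setminus\bigcup_{s<s_w}K_s$. If $w$ is $s_w$-accessible, we are done. Otherwise, approximate $w$ by points $w_n\in\mathbb{H}\setminus K_t$, so that $s\mapsto g_s(w_n)$ is well-defined on $[0,t]$ for every $n$. For each $n$ I would extract a time $s_n\leq t$ and the (unique, by Part 1) $s_n$-accessible point $z_n$ with $|z_n-w_n|\to 0$, by exploiting that the harmonic measure in $\mathbb{H}\setminus K_s$ of small neighborhoods of $w$ concentrates near the tip as $s$ increases toward a time at which the orbit of a perturbation of $w_n$ would collide with $\lambda$. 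Proposition \ref{pro:lawler4.26} supplies the complementary case in which $w$ is itself a limit of tips, and together these yield $\partial K_t\cap\mathbb{H}\subseteq\overline{\bigcup_{s\leq t}\{s\text{-accessible point}\}}$.

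The main obstacle is making the two-parameter limit $g_t(\gamma(u))\to\lambda(t)$ rigorous: $g_t$ is singular at $z$ and the Loewner vector field $2/(g-\lambda)$ blows up at the tip, so one must simultaneously balance the smallness of $u$ against the choice of intermediate time $s$ and invoke careful distortion bounds near the driving point, for instance via Lemma \ref{lem:lawler4.13} after a translation that sends $\lambda(t)$ to the origin. A secondary difficulty in the density half is manufacturing genuine accessible points $z_n$ from the approximating sequence $w_n$ rather than points merely close to $K_{s_n}$; the cleanest route is to argue directly from continuity of $\lambda$ and the flow to identify $z_n$ as the tip that $w_n$ sees at time $s_n$.
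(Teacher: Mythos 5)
The paper does not prove this statement---it is quoted verbatim as Proposition 4.27 of \cite{lawler}---so your proposal has to stand on its own. Your plan for the uniqueness half is the standard one and is essentially workable: show $g_t(\gamma(u))\to\lambda(t)$ along any access curve, then conclude that two access curves would force equal limits. Two soft spots there: (i) Lemma \ref{lem:lawler4.13} only controls points at distance greater than $4R$ from the hull, whereas the points $g_s(\gamma(u))$ you need to track are precisely the ones \emph{close} to $\lambda(t)$; what actually pins them down is the local growth property of a Loewner family (so that $\mathrm{diam}\, g_s(K_t\setminus K_s)\to 0$ as $s\uparrow t$) combined with a displacement bound of the form $|g_A(w)-w|\le C\,\mathrm{diam}(A)$ valid for \emph{all} $w$ in the domain. (ii) The final step ``boundary meets $K_t$ only at the prime end over $\lambda(t)$, forcing $z_1=z_2$'' does not follow just from counting prime ends, since the impression of a single prime end can contain many points; the tool you need is Lindel\"of's theorem (a conformal map, here $g_t^{-1}$, having limits along two arcs that end at the same boundary point of $\mathbb{H}$ must have equal limits). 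Both are fixable.

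The genuine gap is in the density half. The mechanism you propose---extracting accessible points $z_n$ from exterior approximants $w_n$ via concentration of harmonic measure near the tip---is not an argument, and you correctly flag that you cannot yet ``manufacture genuine accessible points.'' The missing idea is elementary: given $w\in\partial K_t\cap\mathbb{H}$ and $\epsilon>0$, choose $w'\in B(w,\epsilon)\cap(\mathbb{H}\setminus K_t)$ and let $z$ be the \emph{first} point at which the segment from $w'$ to $w$ meets the relatively closed set $K_t$. The subsegment from $w'$ up to (but not including) $z$ lies in $\mathbb{H}\setminus K_t\subseteq\mathbb{H}\setminus K_r$ for every $r\le t$, so $z$ is $r$-accessible for $r=T_z\le t$, and $|z-w|\le|w'-w|<\epsilon$. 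This is exactly the device the paper itself uses in the proof of Lemma \ref{lem:B} (the construction of the points $z_i'$), and it makes the appeal to Proposition \ref{pro:lawler4.26} and to harmonic measure unnecessary. Also note that your case split on whether $w$ is $s_w$-accessible is a red herring: the issue is never whether $w$ itself is accessible (comb-type boundaries show it need not be), but whether it is a limit of accessible points, and the segment argument settles that directly.
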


\noindent The restriction on the number of $t$-accessible points also shows that the boundary of a hull always intersects the boundary of previous hulls.

\begin{lemma}\label{lem:B}
Let $K_t$ be a Loewner family generated by $\lambda$. Fix $0<t\leq T$. Then there exists $0<s<t$ so that $\partial_{\mathbb{H}} K_t\cap K_s\not=\emptyset$. Moreover, $\partial_{\mathbb{H}} K_t\cap \partial K_r\not=\emptyset$ for $s\leq r\leq t$.
\end{lemma}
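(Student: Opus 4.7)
The plan is to find an $s$ very close to $t$ for which the new material $K_t \setminus K_s$ has strictly smaller diameter than $\partial_{\mathbb{H}} K_s$. Given such an $s$, the inclusion $\partial_{\mathbb{H}} K_s \subseteq \overline{K_t \setminus K_s}$ would force $\mathrm{diam}(\partial_{\mathbb{H}} K_s) \leq \mathrm{diam}(K_t \setminus K_s)$, contradicting the choice of $s$. So there exists a point $p \in \partial_{\mathbb{H}} K_s$ together with an open neighborhood $V \subseteq \mathbb{H}$ satisfying $V \cap (K_t \setminus K_s) = \emptyset$. On $V$ the two hulls agree, $K_t \cap V = K_s \cap V$, and since $p \in \partial_{\mathbb{H}} K_s$ there are points of $\mathbb{H} \setminus K_s = \mathbb{H} \setminus K_t$ arbitrarily close to $p$. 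Thus $p \in \partial_{\mathbb{H}} K_t \cap K_s$, giving the first claim. For the ``moreover'' assertion, every $r \in [s,t]$ has $K_r \setminus K_s \subseteq K_t \setminus K_s$, so $V$ is also disjoint from $K_r \setminus K_s$ and the identical local argument yields $p \in \partial K_r$.

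Producing such an $s$ requires two diameter estimates. The lower bound $\mathrm{diam}(\partial_{\mathbb{H}} K_s) \geq c\sqrt{s}$ follows for some absolute constant $c > 0$ from standard capacity--diameter bounds applied to the Loewner hull $K_s$ with $\hcap(K_s) = 2s$, so this quantity is bounded below by a positive constant for $s$ in any closed subinterval of $(0,t]$. For the upper bound on the new growth, the composition rule for Loewner chains identifies $g_s(K_t \setminus K_s)$ as the hull at time $t-s$ of the Loewner chain driven by $r \mapsto \lambda(s+r)$ on $[0,t-s]$. Applying Lemma \ref{lem:lawler4.13} to this reparametrized family yields
\begin{equation*}
    \mathrm{diam}(g_s(K_t \setminus K_s)) \leq 8\max\Bigl(\sqrt{t-s},\, \sup_{r\in[s,t]}|\lambda(r)-\lambda(s)|\Bigr),
\end{equation*}
which tends to $0$ as $s \to t^-$ by continuity of $\lambda$.

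The main obstacle is transferring smallness in the $g_s$-image back to smallness of the geometric diameter of $K_t \setminus K_s$ in the $z$-coordinate. This requires continuity of $g_s^{-1}$ at the prime end $\lambda(s)$, i.e.\ that the Loewner hull $K_s$ have a well-defined tip --- a standard but technical fact about Loewner hulls driven by continuous functions. An alternative that avoids the pullback entirely invokes the Loewner family definition directly: for every $\epsilon > 0$ there is a bounded connected set $S \subset \mathbb{H} \setminus K_{t-\delta}$ of diameter less than $\epsilon$ that disconnects $K_t \setminus K_{t-\delta}$ from infinity, and a planar-topology argument then confines $K_t \setminus K_{t-\delta}$ to a region whose diameter is controlled by $\epsilon$. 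Either route drives $\mathrm{diam}(K_t \setminus K_s)$ below $c\sqrt{s}$ for some $s < t$ sufficiently close to $t$, after which the argument of the first paragraph completes the proof.
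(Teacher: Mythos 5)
There is a genuine gap, and it sits exactly at the step you flag as "the main obstacle": the claim that $\mathrm{diam}(K_t\setminus K_s)\to 0$ as $s\uparrow t$ is false for general Loewner hulls. It holds for slits, where $K_t\setminus K_s=\gamma(s,t]$ and continuity of $\gamma$ does the work, but the whole point of this lemma is that we are in the hull setting, where a continuously driven family can \emph{swallow} a region: there can be a time $t_0$ and a set $D$ of diameter $1$ with $D\subseteq K_{t_0}\setminus K_s$ for every $s<t_0$ (e.g.\ the region enclosed when a curve driven by $\lambda(t)=c\sqrt{1-t}$, $c\geq 4$, returns to $\mathbb{R}$, or any SLE$_\kappa$ hull with $\kappa>4$). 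At such a $t$ your comparison $\mathrm{diam}(K_t\setminus K_s)<\mathrm{diam}(\partial_{\mathbb{H}}K_s)$ can never be achieved, so the first paragraph never gets off the ground. Neither of your two proposed routes repairs this. The pullback route fails because smallness of $\mathrm{diam}(g_s(K_t\setminus K_s))$ does not transfer to the $z$-plane: the inequality $\mathrm{diam}(g_K(K))\leq M\,\mathrm{diam}(K)$ from Remark 3.50 of \cite{lawler} only controls the map-down direction, and $g_s^{-1}$ can send an arbitrarily small boundary interval onto a prime end whose impression is an entire swallowed region. The local-growth route fails for the same geometric reason: the definition of a Loewner family gives a connected set $S$ of diameter $<\epsilon$ disconnecting $K_{t}\setminus K_{t-\delta}$ from infinity in $\mathbb{H}\setminus K_{t-\delta}$, but the bounded component of $(\mathbb{H}\setminus K_{t-\delta})\setminus S$ cut off by $S$ need not have small diameter --- just before a swallowing event, a tiny crosscut of the remaining gap cuts off the whole large region about to be absorbed. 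So "a planar-topology argument confines $K_t\setminus K_{t-\delta}$ to a region whose diameter is controlled by $\epsilon$" is exactly the false step.

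The paper avoids all diameter estimates and argues by contradiction with a soft topological tool: if $\partial_{\mathbb{H}}K_t$ met no $K_s$ for $s<t$, then $\partial_{\mathbb{H}}K_t\subseteq K_t\setminus\bigcup_{s<t}K_s$, and since $\partial_{\mathbb{H}}K_t$ is not a single point one can produce two distinct $t$-accessible points (by running short segments from the complement until they first hit $K_t$), contradicting Proposition \ref{pro:lawler4.27} (uniqueness of the $t$-accessible point). That argument is insensitive to swallowing, which is precisely why it works here. Your deduction of the "moreover" clause and the local argument identifying $p\in\partial_{\mathbb{H}}K_t\cap K_s$ from a neighborhood $V$ avoiding $K_t\setminus K_s$ are fine as far as they go, and the lower bound $\mathrm{diam}(\partial_{\mathbb{H}}K_s)\gtrsim\sqrt{s}$ via $\hcap(K_s)=2s$ is believable, but without a valid upper bound on the new growth the proof does not close.
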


\noindent Note that here we use $\partial_{\mathbb{H}}$ to indicate the boundary with respect to $\mathbb{H}$. Explicitly, for $A\subseteq\mathbb{H}$,
\begin{equation}
    \partial_{\mathbb{H}} A=\{z\in\mathbb{C}:\text{ exists }(z_n)_{n=1}^\infty\subseteq\mathbb{H}\setminus A\text{ with }z_n\to z\}
\end{equation}

\begin{proof}
Suppose not - that is, for some fixed $t\in (0,T]$, $\partial_{\mathbb{H}} K_t\cap K_s=\emptyset$ for all $0<s<t$. Since $0<t$, we have that $\partial_{\mathbb{H}} K_t$ is larger than a singleton set. Let $z_1,z_2\in\partial_{\mathbb{H}} K_t$ with $|z_1-z_2|=\delta>0$. Then there are $w_1,w_2\in\mathbb{H}\setminus K_t$ with $|z_i-w_i|<\frac{\delta}{3}$ for $i=1,2$. Let $\gamma_i:[0,1]\to\mathbb{H}$ be the straight line segment starting at $w_i$ and ending at $z_i$ for $i=1,2$. Let $t_i\in(0,1]$ be the first time that $\gamma_i$ intersects $K_t$ and $z_i'=\gamma_i(t_i)$. Two important facts follow. First, since $z_i'\in\partial_{\mathbb{H}} K_t\subseteq K_t\setminus\bigcup_{s<t}K_s$ for $i=1,2$, $z_1'$ and $z_2'$ are $t$-accessible. Second, by construction $|z_1'-z_2'|>\frac{\delta}{3}$, so $z_1'\not= z_2'$. This shows that there is more than one $t$-accessible point, a contradiction to Proposition \ref{pro:lawler4.27}. So, for all $t\in(0,T]$ there is $0<s<t$ with $\partial_{\mathbb{H}} K_t\cap K_s\not=\emptyset$.

The moreover statement follows immediately using the fact that $s\leq r\leq t$ gives $K_s\subseteq K_r\subseteq K_t$.
\end{proof}

\noindent Often we will be considering the family $(g_L(K_t))_{t\in [0,T]}$ where $L$ is a hull disjoint from $K_T$. The next lemma investigates what happens when a Loewner family is conformally transformed.

\begin{lemma}[2.8 \cite{lsw}]\label{lem:lsw2.8}
Let $(K_t)_{t\in[0,T]}$ be a Loewner family driven by $\lambda$. Let $D$ be a relatively open subset of $\overline{\mathbb{H}}$ which contains $\overline{K_T}$, and set $D_{\mathbb{R}}:=D\cap\mathbb{R}$. Let $G:D\to\overline{\mathbb{H}}$ be conformal in $D\setminus D_{\mathbb{R}}$ and continuous in $D$, and suppose that $G(D_{\mathbb{R}})\subset\mathbb{R}$. Then $(G(K_t))_{t\in[0,T]}$ is a Loewner family. Moreover, 
$\partial_t [\hcap(G(K_t))]=G'(\lambda(0))^2\partial_t \hcap(K_t)$ as $t=0$.
\end{lemma}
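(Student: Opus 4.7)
The plan is to verify the three defining properties of a Loewner family for $(G(K_t))$---each term is a hull, monotone inclusion, and the local-growth condition---and then derive the capacity-derivative formula via a local expansion of $G$ at $\lambda(0)$. Because the paper's definition demands $\hcap(K_t)=2t$, the assertion that $(G(K_t))_{t\in[0,T]}$ is a Loewner family should be read up to a continuous, strictly increasing reparametrization of time; the capacity-derivative identity then refers to the original parametrization.

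First I would show each $G(K_t)$ is a hull. Boundedness is immediate: $\overline{K_t}\subseteq\overline{K_T}\subseteq D$ is compact and $G$ is continuous on $D$. For simple connectedness of $\mathbb{H}\setminus G(K_t)$, I would Schwarz-reflect $G$ across $D_{\mathbb{R}}$ to obtain an analytic extension on an open set $U\subseteq\mathbb{C}$ containing $\overline{K_t}$; conformality in $D\setminus D_{\mathbb{R}}$ forces this extension to be locally injective, hence (shrinking $U$) a homeomorphism onto its image. A loop in $\mathbb{H}\setminus G(K_t)$ can then be homotoped into a small neighborhood of $G(K_t)$, pulled back via $G^{-1}$ to a loop in $\mathbb{H}\setminus K_t$, contracted there (since $\mathbb{H}\setminus K_t$ is simply connected), and pushed forward, giving simple connectedness.

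Next, monotonicity $G(K_s)\subseteq G(K_t)$ for $s<t$ is immediate. For local growth, given $\epsilon>0$, uniform continuity of the Schwarz-extended $G$ on a compact neighborhood of $\overline{K_T}$ yields $\epsilon'>0$ such that sets of diameter $<\epsilon'$ there have $G$-image of diameter $<\epsilon$. Applying the local-growth condition for $(K_t)$ with parameter $\epsilon'$ produces $\delta>0$ and a separating set $S$; its image $G(S)$ is bounded, connected, of diameter $<\epsilon$, contained in $\mathbb{H}\setminus G(K_t)$, and separates $G(K_{t+\delta})\setminus G(K_t)$ from $\infty$ in $\mathbb{H}\setminus G(K_t)$ because the local homeomorphism $G$ preserves the topological separation structure. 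Continuity of $t\mapsto\hcap(G(K_t))$ follows from monotonicity together with the local expansion of $G$, so that after reparametrizing so that half-plane capacity equals $2t$, Theorem 2.6 of \cite{lsw} produces a continuous driving function.

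For the moreover statement, Schwarz reflection supplies the real-analytic expansion
\[
G(z)=G(\lambda(0))+G'(\lambda(0))(z-\lambda(0))+O\bigl(|z-\lambda(0)|^{2}\bigr)
\]
near $\lambda(0)$, with $G'(\lambda(0))\in\mathbb{R}$ because $G$ preserves the real line. Lemmas \ref{lem:cr3.3a} and \ref{lem:lawler4.13} together imply that $\text{diam}(K_t)=O(\sqrt{t})$ as $t\to 0^{+}$ and that $K_t$ clusters around $\lambda(0)$. Combining the expansion with the affine scaling and translation invariance of hcap in Lemma \ref{lem:3.2.2}(d) gives, to leading order, $\hcap(G(K_t))\approx G'(\lambda(0))^{2}\,\hcap(K_t)$ for small $t$. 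The main obstacle is making this asymptotic rigorous: one must show that the hcap contribution of the $O(|z-\lambda(0)|^{2})$ error is $o(t)$. I would sandwich $G(K_t)$ between two affine images of slight enlargements and shrinkages of $K_t$ derived from the quadratic error bound, and then use parts (a)--(c) of Lemma \ref{lem:3.2.2} to control the resulting hcap discrepancy, concluding $\hcap(G(K_t))/t\to 2G'(\lambda(0))^{2}$ as $t\to 0^{+}$, which is the desired identity.
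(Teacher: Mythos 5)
The paper does not prove this lemma; it is imported verbatim from \cite{lsw}, so there is no internal proof to compare against and your argument has to stand on its own. Your treatment of the first assertion (hull property via Schwarz reflection, monotonicity, transfer of the local-growth condition by uniform continuity, and the caveat that one must reparametrize time so that $\hcap(G(K_t))=2t$) follows the standard route and is structurally sound, modulo routine care: you should check that the separating set $S$ may be taken inside $D$ (it lies within $\epsilon'$ of $\overline{K_{t+\delta}}\cup\mathbb{R}$, so this is arrangeable), that $G(S)$ still separates once a path leaves the neighborhood on which $G$ is injective, and that $t\mapsto\hcap(G(K_t))$ is strictly increasing and continuous rather than merely nondecreasing.

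The genuine gap is in the ``moreover'' part. First, the claim that Lemmas \ref{lem:cr3.3a} and \ref{lem:lawler4.13} give $\operatorname{diam}(K_t)=O(\sqrt{t})$ is false: together they only give $\operatorname{diam}(K_t)\lesssim\max\{\sqrt{t},\operatorname{osc}_{[0,t]}\lambda\}$, and for a merely continuous driving function the oscillation can dominate $\sqrt{t}$ (e.g.\ $\lambda(t)=t^{1/4}$ produces hulls of diameter comparable to $t^{1/4}$). Second, the proposed sandwich does not work as described: the error in $G(z)=G(\lambda(0))+G'(\lambda(0))(z-\lambda(0))+O(|z-\lambda(0)|^{2})$ is an additive displacement of points, and a set perturbed by a small Hausdorff distance is in general neither contained in nor contains an affine dilate of the original, so Lemma \ref{lem:3.2.2} furnishes no inclusions to squeeze between. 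Even granting some diameter-based bound such as $|\hcap(G(K_t))-G'(\lambda(0))^{2}\hcap(K_t)|\lesssim\operatorname{diam}(K_t)^{3}$, this need not be $o(t)$ once $\operatorname{diam}(K_t)\gg\sqrt{t}$. The estimate one actually needs is multiplicative,
\begin{equation}
\hcap(G(K_t))=G'(\lambda(0))^{2}\,\hcap(K_t)\bigl(1+O(\operatorname{diam}K_t)\bigr),
\end{equation}
whose error is $o(t)$ because $\hcap(K_t)=2t$ and $\operatorname{diam}(K_t)\to0$; this is proved in \cite{lsw} (and in Lawler's book) via the Brownian-motion/harmonic-measure representation of half-plane capacity, comparing $\operatorname{Im}G$ with $G'(\lambda(0))\operatorname{Im}z$ near $\lambda(0)$, not via diameter sandwiching. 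You would need to import that argument, or an equivalent one, to close the proof.
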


\subsection{Prime Ends}

In order to generalize the results of \cite{RS14}, we need to generalize the tip of a curve into the setting of hulls. This is done with prime ends, which are equivalence classes of crosscuts. We give only a brief introduction, for more details see \cite{rempe-gillen}.

\begin{defn}[\cite{rempe-gillen}] Let $\Omega\subseteq\mathbb{H}$ be a simply connected domain containing $\infty$. Let $C$ be a crosscut of $\Omega$ (that is, a Jordan arc in $\Omega$ with endpoints in $\partial\Omega$) and $\Omega_C$ the component of $\Omega\setminus C$ not containing $\infty$. A prime end of $\Omega$ is represented by a sequence of pairwise disjoint crosscuts $(C_n)_{n=1}^{\infty}$ with $\text{diam}(C_n)\to 0$ as $n\to\infty$ and $C_{n+1}\subseteq\overline{\Omega_{C_n}}$. Two sequences, $(C_n)_{n=1}^{\infty}$ and $(\widetilde{C}_n)_{n=1}^{\infty}$, represent the same prime end if for each $n$ there is a $J_n\in\mathbb{N}$ so that $\widetilde{C}_j\subseteq\Omega_{C_n}$ for $j\geq J_n$ and vice versa.
\end{defn}

\begin{defn}
Let $p$ be a prime end represented by the sequence of crosscuts $(C_n)_{n=1}^{\infty}$. The impression of $p$ is defined as $I(p)=\bigcap_{n=1}^{\infty}\overline{\Omega_{C_n}}.$
Since $(\overline{\Omega_{C_n}})_{n=1}^{\infty}$ is a decreasing sequence of nonempty, compact, and connected sets, the impression of $p$ is nonempty. Moreover, the impression of $p$ is independent of its representation.
\end{defn}

\begin{lemma}\label{lem:C}
Let $K_t$ be a Loewner family generated by $\lambda$. Fix $0<t\leq T$. If there exists $0<s<t$ such that $\lambda(s)<\lambda(r)$ or $\lambda(s)>\lambda(r)$ for $r\in(s,t)$, then $\overline{K_s}\cap\partial_{\mathbb{H}} K_t\not=\emptyset$.
\end{lemma}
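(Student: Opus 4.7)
The plan is to work in the image under $g_s$, where the post-time-$s$ hull has a clean description via Lemma \ref{lem:cr3.3a}, and then pull back to produce a point in $\overline{K_s}$ that is approachable from $\mathbb{H}\setminus K_t$. Without loss of generality, assume $\lambda(s) < \lambda(r)$ for $r\in(s,t)$; the other case follows by the reflection symmetry $z\mapsto -\overline{z}$, $\lambda\mapsto -\lambda$.

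The first step is to observe that the time-shifted family $(g_s(K_{s+u}\setminus K_s))_{u\in[0,t-s]}$ is itself a Loewner family, driven by $r\mapsto \lambda(s+r)$. Since this driver takes values in $[\lambda(s),\sup_{[s,t]}\lambda]$, Lemma \ref{lem:cr3.3a} yields
\begin{equation}
    \overline{g_s(K_t\setminus K_s)}\subseteq\{z\in\overline{\mathbb{H}}:\mathrm{Re}(z)\geq\lambda(s)\}.
\end{equation}
In particular, for each small $\epsilon>0$ the point $w_\epsilon:=\lambda(s)-\epsilon+i\epsilon$ lies in $\mathbb{H}\setminus\overline{g_s(K_t\setminus K_s)}$, so $z_\epsilon:=g_s^{-1}(w_\epsilon)$ lies in $\mathbb{H}\setminus K_s$ and satisfies $z_\epsilon\notin K_t\setminus K_s$; hence $z_\epsilon\in\mathbb{H}\setminus K_t$.

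Next, send $\epsilon\to 0$. Because $g_s^{-1}$ is hydrodynamically normalized and $w_\epsilon\to\lambda(s)\in\mathbb{R}$, the preimages $z_\epsilon$ remain in a fixed bounded region (using e.g.\ Lemma \ref{lem:lawler4.13} applied to the time-reversed picture, or directly the Nevanlinna representation of $g_s^{-1}$), so by passing to a subsequence $\epsilon_n\to 0$ we obtain $z_{\epsilon_n}\to z^*\in\overline{\mathbb{H}}$. The main step is to show $z^*\in\overline{K_s}$. If $z^*$ were in $\mathbb{H}\setminus\overline{K_s}$, continuity of $g_s$ there would force $g_s(z^*)=\lambda(s)\in\mathbb{R}$, contradicting $g_s(\mathbb{H}\setminus K_s)\subseteq\mathbb{H}$. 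Likewise, if $z^*$ were in $\mathbb{R}\setminus\overline{K_s}$, Schwarz reflection extends $g_s$ conformally across a neighborhood of $z^*$, and that extension maps $\mathbb{R}\setminus\overline{K_s}$ homeomorphically onto the complement of the image of the tip; but $\lambda(s)$ is precisely the image of the tip prime end, so again a contradiction. Thus $z^*\in\overline{K_s}$, i.e.\ $z^*$ lies in the impression of the prime end of $\mathbb{H}\setminus K_s$ corresponding to $\lambda(s)$.

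Finally, since every $z_{\epsilon_n}\in\mathbb{H}\setminus K_t$, the limit $z^*$ satisfies $z^*\in\partial_{\mathbb{H}} K_t$ by the definition recalled just before the lemma. Combined with $z^*\in\overline{K_s}$, this produces the desired point in $\overline{K_s}\cap\partial_{\mathbb{H}} K_t$. The technical heart is the prime-end step (the identification of the limit $z^*$ with a point of $\overline{K_s}$): this is the moment where the strict inequality $\lambda(s)<\lambda(r)$ is actually used, since it guarantees that the approximating points $w_\epsilon$ can be chosen on the "open side" of $\lambda(s)$ and their preimages stay in $\mathbb{H}\setminus K_t$ rather than being swallowed by the growth between times $s$ and $t$.
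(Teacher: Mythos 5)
Your argument is essentially the paper's own proof: both use Lemma \ref{lem:cr3.3a} to confine $\overline{g_{K_s}(K_t\setminus K_s)}$ to one side of $\lambda(s)$, approach $\lambda(s)$ from the other side by points of $\mathbb{H}$, pull back under $g_{K_s}^{-1}$ to get a sequence in $\mathbb{H}\setminus K_t$, and extract a subsequential limit lying in the impression of the prime end at $\lambda(s)$, hence in $\overline{K_s}\cap\partial_{\mathbb{H}}K_t$. Your case analysis ruling out $z^*\in(\mathbb{H}\cup\mathbb{R})\setminus\overline{K_s}$ is just a slightly more explicit version of the paper's appeal to the impression (both tacitly use that for $s>0$ the growth point cannot sit on $\mathbb{R}$ away from $\overline{K_s}$, which is where Lemma \ref{lem:B} / Proposition \ref{pro:lawler4.27} enter), so the proposal is correct and follows the same route.
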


\begin{proof}
Suppose $\lambda(s)<\lambda(r)$ (resp. $\lambda(s)>\lambda(r)$) for $s<r<t$. Then Lemma \ref{lem:cr3.3a} shows that $\lambda(s)\leq\min\{\overline{g_{K_s}(K_t\setminus K_s)}\cap\mathbb{R}\}$ ($\geq\max$ resp.). As $\lambda(s)\in\overline{g_{K_s}(K_t\setminus K_s)}$, $\lambda(s)\in\partial\overline{g_{K_s}(K_t\setminus K_s)}$. Now, there exists $(w_n)_{n=1}^{\infty}\subset\mathbb{H}\setminus\overline{g_{K_s}(K_t\setminus K_s)}$ with $w_n\to\lambda(s)$. So, there exists a corresponding sequence $(z_n)_{n=1}^{\infty}\subset\mathbb{H}\setminus K_t$ so that $g_{K_s}(z_n)=w_n$. Furthermore, there is a subsequence of $(z_n)_{n=1}^{\infty}$ that converges to a point in $\overline{K_s}$ as there is at least one point in the impression of the prime end corresponding to $\lambda(s)$. This shows that $\overline{K_s}\cap\partial_{\mathbb{H}} K_t\not=\emptyset$.
\end{proof}

\begin{defn}
Let $\Omega\subseteq\mathbb{H}$ be a simply connected domain containing $\infty$. Let $P(\Omega)$ denote the set of prime ends of $\Omega$ and $\widehat{\Omega}:=\Omega\cup P(\Omega)$ denote the Carath\'eodory compactification of $\Omega$.
We can define a topology on $\widehat{\Omega}$ by making the following equivalent:

\begin{itemize}
    \item $(z_j)_{j=1}^{\infty}\subseteq\Omega$ converges to $p\in P(\Omega)$
    \item for any $(C_n)_{n=1}^{\infty}\in p\in P(\Omega)$ there exists $J\in\mathbb{N}$ so that $(z_j)_{j=J}^{\infty}\subseteq\Omega_{C_n}$
\end{itemize}
\end{defn}

\noindent Under this topology, if $g:\Omega\to\mathbb{H}$ is conformal, then $g$ extends to a homeomorphism $\widehat{g}:\widehat{\Omega}\to\overline{\mathbb{H}}$. We can identify prime ends of $\Omega$ with boundary points of $\Omega$ as follows:

\begin{equation}
 (z_j)_{j=1}^{\infty}\subseteq\Omega
 \text{ with }
 z_j\to z\in \partial\Omega
 \text{ if and only if }
 (z_j)_{j=1}^{\infty}\subseteq\Omega
 \text{ with }
 z_j\to p\in P(\Omega)
\end{equation}

\noindent If $z\in\partial\Omega$ and $p\in P(\Omega)$ are identified, we do not distinguish the point $z$ and the prime end $p$.

\noindent Since the identity map on $\mathbb{H}$ is conformal, $\overline{H}$ and $\widehat{H}$ are homeomorphic and we can think of boundary points (i.e. real points) as prime ends and the other way around.

\begin{defn}
Let $K_t$ be a Loewner family driven by $\lambda$ with Loewner chain $g_t$. Let $p$ be a prime end of $\mathbb{H}\setminus K_t$.
We say that ``$p$ corresponds to $\lambda(t)$'' or ``$p$ is the (generalized) tip of $K_t$'' if $\widehat{g}_t(p)=\lambda(t)$.
\end{defn}

\noindent This gives us a family of prime ends $(p_t)_{t\in[0,T]}$ each corresponding to $\lambda(t)$ which generates $K_t$. More specifically, $\widehat{g}_t(p_t)=\lambda(t)$ where $g_t$ is the Loewner chain corresponding to $K_t$ and $\lambda$ is its driving function.

In the situation of a curve $\gamma$ with Loewner chain $g_t$, since $g_t(\gamma(t))=\lambda(t)$, the tip at time $t$, $\gamma(t)$, is the prime end corresponding to $\lambda(t)$. This is the reason that we use prime ends to generalize tips.

We now will revisit the definitions of $g_B^+(A)$ and $g_B^-(A)$ and relate them to prime ends. If $A\not\subseteq\text{int}(B)$,
\begin{equation}
    g_B^+(A)=\sup\{g_B(p)\in\mathbb{R}:p\in P(\mathbb{H}\setminus A),I(p)\cap\overline{A}\not=\emptyset\}
\end{equation}
and
\begin{equation}
    g_B^-(A)=\inf\{g_B(p)\in\mathbb{R}:p\in P(\mathbb{H}\setminus A),I(p)\cap\overline{A}\not=\emptyset\}.
\end{equation}
This follows from $g_B$ extending to $\widehat{\mathbb{H}\setminus B}$. Note that from now on, we will assume $g_B$ is its extension $\widehat{g}_B$.

\subsection{Multiple Loewner Hulls}\label{multiloewnerhulls}

We now switch to the setting of our main result: multiple, disjoint Loewner families. Let $K$ and $L$ be disjoint hulls. There are many ways that $K\cup L$ can be mapped down to the real line. Two basic ways are mapping down one hull and then mapping down the image other hull, see Figure \ref{fig:maponethenother}. By uniqueness we have
\begin{equation}\label{eqn:mapcomposition}
    g_{g_K(L)}\circ g_K=g_{K\cup L}=g_{g_L(K)}\circ g_L.
\end{equation}
This gives a significant amount of flexibility in our maps.

\begin{figure}
\centering
\begin{tikzpicture}

\draw (0,0) -- (6,0);
\draw[thick] (0.5,0) 
    to [out=90, in=270] (1,0.75)
    to [out=90, in=270] (0.5,1.5)
    to [out=90, in=180] (0.875,1.875)
    to [out=0, in=90] (1.25,1.5)
    to [out=270, in=180] (1.5,1.25)
    to [out=0, in=270] (1.875,2)
    to [out=90, in=90] (2.5,1.25)
    to [out=270, in=90] (2.25,0.5)
    to [out=270, in=135] (2.75,0);
\node[above] at (1.625,0) {$K_t$};
\draw[fill] (0.875,1.875) circle [radius=0.05];
\node[above] at (0.875,1.875) {$p_t$};
\draw[thick] (3.25,0) 
    to [out=90, in=270] (3.5,0.75)
    to [out=90, in=270] (3.25,1.5)
    to [out=90, in=180] (3.75,2)
    to [out=0, in=90] (4.25,1.5)
    to [out=270, in=180] (4.75,1)
    to [out=0, in=90] (5.5,0);
\node[above] at (4.375,0) {$L$};

\draw (9,0) -- (15,0);
\draw[thick] (10.17,0)
    to [out=90.242,in=257.702] (10.6135,0.611557)
    to [out=84.752,in=264.509] (10.2432,1.30248)
    to [out=84.3251,in=172.591] (10.6219,1.60237)
    to [out=-7.88783,in=80.947] (10.9066,1.21776)
    to [out=-99.053,in=169.8] (11.0813,0.968246)
    to [out=-10.2,in=165.7926] (11.5572,1.54393)
    to [out=-14.2074,in=63.8903] (11.8558,0.73348)
    to [out=243.8903,in=77.6203] (11.5178,0.303857)
    to [out=257.6203,in=83] (11.75,0);
\node[below] at (10.96,0) {$g_L(K_t)$};
\draw[fill] (10.6219,1.60237) circle [radius=0.05];
\node[above] at (10.6219,1.60237) {$g_L(p_t)$};

\draw (0,-4) -- (6,-4);
\draw[thick] (4.03045,-4)
    to[out=93.3425,in=-63.638](4.03006,-3.64881)
    to[out=116.362,in=-48.1169](3.55863,-3.32399)
    to[out=131.883,in=202.392](3.73849,-2.64428)
    to[out=382.392,in=106.857](4.33724,-2.92541)
    to[out=286.857,in=189.0628](4.83518,-3.23488)
    to[out=9.063,in=90.2936](5.5,-4);
\node[below] at (4.77,-4) {$g_{K_t}(L)$};

\draw (9,-4) -- (15,-4);

\draw[->] (6.5,1) to [out=0,in=180] (8.5,1);
\node[above] at (7.5,1) {$g_L$};
\draw[->] (6.5,-3) to [out=0,in=180] (8.5,-3);
\node[below] at (7.5,-3) {$g_{g_{K_t}(L)}$};
\draw[->] (3,-0.5) to [out=270,in=90] (3,-1.5);
\node[left] at (3,-1) {$g_{K_t}$};
\draw[->] (12,-0.5) to [out=270,in=90] (12,-1.5);
\node[right] at (12,-1) {$g_{g_L(K_t)}$};
\draw[->] (6.5,-0.5) to [out=333.43,in=153.47] (8.5,-1.5);
\node[above,rotate=333.43] at (7.5,-1) {$g_{K_t\cup L}$};

\draw[fill] (1.5,-4) circle [radius=0.05];
\node[below] at (1.5,-4) {$U(t)$};
\draw[fill] (10.5,-4) circle [radius=0.05];
\node[below] at (10.5,-4) {$\lambda(t)$};

\end{tikzpicture}
\caption{Mapping Down Hulls in Different Orders}
\label{fig:maponethenother}
\end{figure}
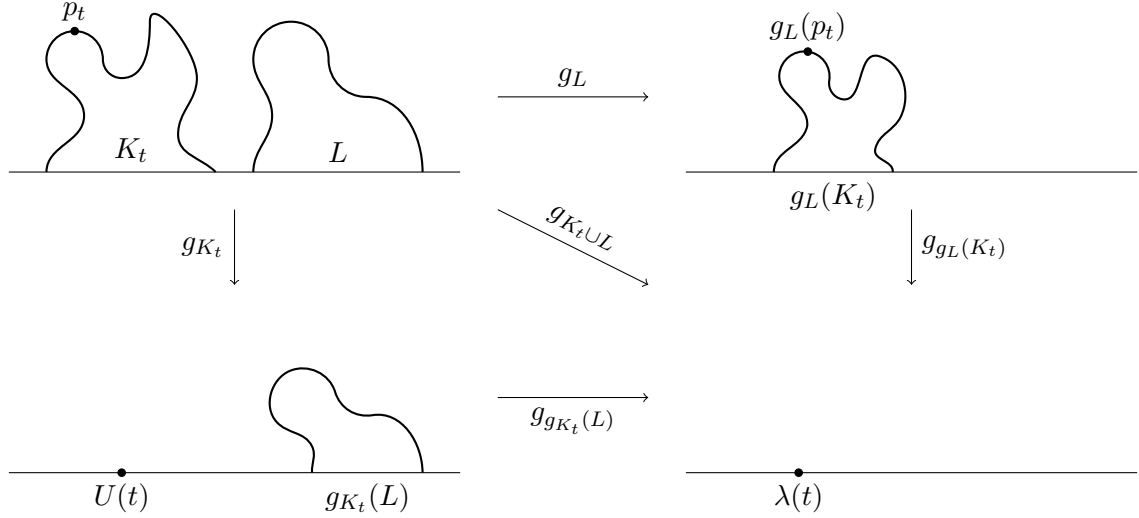

\noindent We now state a few preliminary results on what happens when another hull is added.

\begin{lemma}\label{lem:3.6.8c}
Let $K_t$ be a Loewner family and $L$ a hull disjoint from $K_T$. If $K_s\cap\partial_{\mathbb{H}} K_t\not=\emptyset$, then for $s\leq r\leq t$,
\begin{equation}
    g_{K_t\cup L}^-(K_t\setminus K_s)
    \leq g_{K_t\cup L}^-(K_t\setminus K_r)
    \leq g_{K_t\cup L}^+(K_t\setminus K_r)
    \leq g_{K_t\cup L}^+(K_t\setminus K_s)
\end{equation}
\end{lemma}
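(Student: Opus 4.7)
The middle inequality $g_{K_t\cup L}^-(K_t\setminus K_r) \le g_{K_t\cup L}^+(K_t\setminus K_r)$ is immediate from the definitions, as $g_B^-$ and $g_B^+$ are respectively an infimum and a supremum of the same set of limiting real values (both equal to $0$ by convention when the set lies in $\text{int}(B)$).

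For the two outer inequalities, the essential fact is the nested inclusion $K_t\setminus K_r \subseteq K_t\setminus K_s$, which comes from $K_s \subseteq K_r$ (since the Loewner family is monotonically increasing in time). I would then establish the following general monotonicity principle: if $A_1 \subseteq A_2$ are subsets of a hull $B$, then $g_B^+(A_1) \le g_B^+(A_2)$ and $g_B^-(A_2) \le g_B^-(A_1)$. The proof of this principle is a direct transfer of witnesses: any sequence $(z_n) \subseteq \mathbb{H}\setminus B$ with $z_n \to z \in A_1$ and $g_B(z_n) \to x \in \mathbb{R}$ automatically satisfies $z_n \to z \in A_2$, so the set of admissible limits appearing in the definition of $g_B^\pm(A_1)$ is contained in the corresponding set for $g_B^\pm(A_2)$. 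Taking supremum and infimum yields the two outer inequalities, and assembling them with the trivial middle inequality gives the full chain.

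The hypothesis $K_s\cap \partial_{\mathbb{H}} K_t \ne \emptyset$ enters to rule out the degenerate scenario in which one of the sets $K_t \setminus K_r$ or $K_t \setminus K_s$ falls entirely inside $\text{int}(K_t\cup L)$; in that case the convention $g_B^\pm = 0$ kicks in and could a priori break monotonicity if the other set has nonzero $g_B^\pm$. Under the hypothesis, the moreover statement of Lemma \ref{lem:B} guarantees $\partial_{\mathbb{H}} K_t \cap \partial K_r \ne \emptyset$ for every $r \in [s,t]$, and I would use this together with the prime end characterization of $g_B^\pm$ to exhibit sequences in $\mathbb{H}\setminus(K_t\cup L)$ whose limits lie in $K_t\setminus K_r$ on the boundary of $K_t\cup L$. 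Combined with the monotonicity principle above, this finishes the proof.

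The main obstacle I anticipate is this non-degeneracy step: the boundary intersection point provided by Lemma \ref{lem:B} belongs to $K_r$ rather than to $K_t\setminus K_r$, so additional accessibility considerations are needed to locate a prime end of $\mathbb{H}\setminus (K_t\cup L)$ whose impression meets $K_t\setminus K_r$ itself. I would invoke Proposition \ref{pro:lawler4.26} on the density of accessible points near $\partial K_r$ and the disjointness of $L$ from $K_T$ to push sequences of $r'$-accessible points with $r < r' \le t$ close to the boundary intersection, producing the required witness in $K_t \setminus K_r$.
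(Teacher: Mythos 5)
Your proof is correct and matches the paper's: the middle inequality is definitional, and the two outer ones follow by transferring witness sequences through the inclusion $K_t\setminus K_r\subseteq K_t\setminus K_s$, which is exactly the paper's three-line argument. The paper's proof contains nothing beyond this — it never invokes the hypothesis $K_s\cap\partial_{\mathbb{H}}K_t\neq\emptyset$ nor engages with the degenerate convention $g_B^{\pm}=0$ — so the extra non-degeneracy machinery in your final two paragraphs (Lemma \ref{lem:B}, Proposition \ref{pro:lawler4.26}) is additional caution on your part rather than a step the paper carries out.
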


\begin{proof}
The middle inequality follows from the definitions of $g_{K_t\cup L}^-$ and $g_{K_t\cup L}^+$.

For the first inequality, let $(z_n)_{n=1}^{\infty}\subseteq\mathbb{H}\setminus(K_t\cup L)$ with $z_n\to z\in K_t\setminus K_r$ and $g_{K_t\cup L}\to x\in\mathbb{R}$. Then as $K_s\subseteq K_r$, $z\in K_t\setminus K_s$. So, $g_{K_t\cup L}(K_t\setminus K_s)\leq x$. This holds for any such sequence, so the first inequality is proven.

The third inequality follows in the same manner.
\end{proof}

Let $K_t$ be a Loewner family driven by $U:[0,T]\to\mathbb{R}$ and $L$ be a hull disjoint from $K_T$. What happens to $U$ if we map down $L$ and then map down $g_L(K_t)$? What happens to $U$ if we do the opposite and map down $K_t$ then $L$? The answer is actually given using (\ref{eqn:mapcomposition}) and $g_{K_t}(p_t)=U(t)$ for the corresponding family of prime ends $p_t$. Observe:
\begin{equation}\label{eqn:timemapcomposition}
    g_{g_{K_t}(L)}(U(t))=g_{g_{K_t}(L)}(g_{K_t}(p_t))=g_{g_{L}(K_t)}(g_L(p_t)).
\end{equation}
If we define $\lambda(t)=g_{g_{K_t}(L)}(U(t))$, then, as $g_L(p_t)$ is the (generalized) tip of $g_L(K_t)$, $\lambda$ drives $g_L(K_t)$. Moreover, by (\ref{eqn:timemapcomposition}), $\lambda(t)=g_{K_t\cup L}(p_t)$ (see Figure \ref{fig:maponethenother}). Since $p_t$ is the (generalized) tip of $K_t$ in the hull $K_t\cup L$, we get the usual relationship between tips and driving functions. This gives us a concrete way of defining the driving function in the multiple hull setting.

\begin{lemma}\label{lem:A}
Let $K_t$ be a Loewner family driven by $U:[0,T]\to\mathbb{R}$. Let $L$ be a hull disjoint from $K_T$. Let $\lambda(t)=g_{g_{K_t}(L)}(U(t))$. Fix $0\leq s<t\leq T$ so that $K_s\cap\partial_{\mathbb{H}} K_t\not=\emptyset$. Then for $s\leq r\leq t$
\begin{equation}
    g_{K_t\cup L}^-(K_t\setminus K_s)\leq\lambda(r)\leq g_{K_t\cup L}^+(K_t\setminus K_s).
\end{equation}
\end{lemma}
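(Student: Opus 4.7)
The plan is to work in the picture obtained by applying $g_{K_r\cup L}$, using the decomposition $g_{K_t\cup L}=g_{C_r}\circ g_{K_r\cup L}$ with $C_r:=g_{K_r\cup L}(K_t\setminus K_r)$. By (\ref{eqn:timemapcomposition}), $\lambda(r)=g_{K_r\cup L}(p_r)$ is the image of the tip $p_r$ under this map, and it is the initial driving value of the Loewner family $(\hat{C}_{r'})_{r'\in[0,t-r]}$ in $\mathbb{H}$ defined by $\hat{C}_{r'}:=g_{K_r\cup L}(K_{r+r'}\setminus K_r)$, which is a Loewner family by Lemma \ref{lem:lsw2.8}, starting from $\hat{C}_0=\emptyset$ and reaching $\hat{C}_{t-r}=C_r$.

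The first substantive step is to show $\lambda(r)\in\overline{C_r}\cap\mathbb{R}$. Since $\hat{C}_{r'}$ starts from $\emptyset$ and its driving function has initial value $\lambda(r)$, Lemma \ref{lem:cr3.3a} applied on $[0,\epsilon]$ confines $\overline{\hat{C}_\epsilon}$ to the strip $[\lambda(r)-\delta,\lambda(r)+\delta]\times\mathbb{R}$, with $\delta\to 0$ as $\epsilon\to 0$ by continuity; since $\overline{\hat{C}_\epsilon}\cap\mathbb{R}$ is nonempty (the hull emerges from a real point) and $\hat{C}_\epsilon\subseteq C_r$, letting $\epsilon\downarrow 0$ forces $\lambda(r)\in\overline{C_r}\cap\mathbb{R}$ by closedness. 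Next, a limiting application of Lemma \ref{lem:3.2.3}(a) (with $\alpha\uparrow\min(\overline{C_r}\cap\mathbb{R})$ and $\beta\downarrow\max(\overline{C_r}\cap\mathbb{R})$) gives $\overline{C_r}\cap\mathbb{R}\subseteq[g_{C_r}^-(C_r),g_{C_r}^+(C_r)]$, so $\lambda(r)$ lies in this interval. A prime-end correspondence through the decomposition identifies $g_{C_r}^\pm(C_r)=g_{K_t\cup L}^\pm(K_t\setminus K_r)$, yielding $\lambda(r)\in[g_{K_t\cup L}^-(K_t\setminus K_r),g_{K_t\cup L}^+(K_t\setminus K_r)]$. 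Finally, Lemma \ref{lem:3.6.8c}, applied using the hypothesis $K_s\cap\partial_{\mathbb{H}}K_t\neq\emptyset$, widens this interval to $[g_{K_t\cup L}^-(K_t\setminus K_s),g_{K_t\cup L}^+(K_t\setminus K_s)]$, completing the proof.

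The main obstacle is the prime-end identification $g_{C_r}^\pm(C_r)=g_{K_t\cup L}^\pm(K_t\setminus K_r)$: one must verify that prime ends of $\mathbb{H}\setminus(K_t\cup L)$ whose impression meets $\overline{K_t\setminus K_r}$ correspond bijectively, under the Carath\'eodory extension of $g_{K_r\cup L}$, to prime ends of $\mathbb{H}\setminus C_r$ whose impression meets $\overline{C_r}$, handling carefully the ``mixed'' cases where an impression straddles $\overline{K_r}$ and $\overline{K_t\setminus K_r}$ so that the sup/inf values match exactly.
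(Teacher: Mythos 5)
Your argument follows the paper's proof essentially step for step: the same factorization $g_{K_t\cup L}=g_{A_r}\circ g_{K_r\cup L}$ with $A_r=g_{K_r\cup L}(K_t\setminus K_r)$ (your $C_r$), the same observation that $\lambda(r)$ lies in $\overline{A_r}\cap\mathbb{R}$, the same passage via Lemma \ref{lem:3.2.3} to the bounds $g_{K_t\cup L}^{\pm}(K_t\setminus K_r)$, and the same final widening via Lemma \ref{lem:3.6.8c}. The extra work you invest in justifying $\lambda(r)\in\overline{C_r}\cap\mathbb{R}$ (via Lemma \ref{lem:cr3.3a}) and in the prime-end bookkeeping only fills in details that the paper asserts without comment, so that is fine.

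The one genuine gap is the endpoint $r=t$. Your entire argument is built on the nonempty hull $C_r=g_{K_r\cup L}(K_t\setminus K_r)$, and for $r=t$ this set is empty: there is no family $\hat{C}_{r'}$ to speak of, $\overline{C_t}\cap\mathbb{R}=\emptyset$, and the claim $\lambda(t)\in\overline{C_t}\cap\mathbb{R}$ is vacuously false, so the chain of reasoning collapses there. Since the lemma asserts the inequality for all $s\leq r\leq t$, and the case $r=t$ is precisely what is needed downstream (to bound $|\lambda(t)-\lambda(s)|$ in the proof of Lemma \ref{lem:3.6.10}), this case must be handled. The fix is the one the paper uses: take $r_n\uparrow t$ with $r_n\geq s$, apply your argument to each $r_n$, and invoke the continuity of $\lambda$ together with the closedness of the interval $[g_{K_t\cup L}^-(K_t\setminus K_s),\,g_{K_t\cup L}^+(K_t\setminus K_s)]$ to pass to the limit.
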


\begin{proof}
Let $0\leq s< t\leq T$, $K_s\cap\partial K_t\not=\emptyset$, and $A_r=g_{K_r\cup L}(K_t\setminus K_r)$ for $s\leq r\leq t$. Then $\lambda(r)\in\mathbb{R}\cap\overline{g_{K_r\cup L}(K_t\setminus K_r)}=\mathbb{R}\cap\overline{A_r}$. Since $g_{K_t\cup L}=g_{A_r}\circ g_{K_r\cup L}$, by Lemma \ref{lem:3.2.3}, $\lambda(r)\in\mathbb{R}\cap\overline{g_{K_t\cup L}(K_t\setminus K_r)}$. So, $g_{K_t\cup L}^-(K_t\setminus K_r)\leq\lambda(r)\leq g_{K_t\cup L}^+(K_t\setminus K_r)$ for $s\leq r\leq t$.

Let $s<r<t$. Then as $K_s\subset K_r$ and $K_s\cap\partial_{\mathbb{H}} K_t\not=\emptyset$, we have $K_r\cap\partial_{\mathbb{H}} K_t\not=\emptyset$. Using Lemma \ref{lem:3.6.8c},
\begin{equation}
    g_{K_t\cup L}^-(K_t\setminus K_s)
    \leq g_{K_t\cup L}^-(K_t\setminus K_r)
    \leq \lambda(r)
    \leq g_{K_t\cup L}^+(K_t\setminus K_r)
    \leq g_{K_t\cup L}^+(K_t\setminus K_s)
\end{equation}

Lastly, let $r_n\uparrow t$ with $s\leq r_n$. Then for all $n\in\mathbb{N}$
\begin{equation}
    g_{K_t\cup L}^-(K_t\setminus K_s)
    \leq \lambda(r_n)
    \leq g_{K_t\cup L}^+(K_t\setminus K_s)
\end{equation}
As $\lambda$ is continuous, the result holds for $t$.
\end{proof}

\begin{coro}\label{cor:C}
Let $K_t$ be a Loewner family driven by $U:[0,T]\to\mathbb{R}$. Let $L$ be a hull disjoint from $K_T$. Let $\lambda(t)=g_{g_{K_t}(L)}(U(t))$. If $|\lambda(t)-\lambda(s)|>|\lambda(t)-\lambda(r)|$ for $s<r<t$, then $K_s\cap\partial_{\mathbb{H}} K_t\not=\emptyset$.
\end{coro}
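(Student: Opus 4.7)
The plan is to translate the hypothesis into a strict monotonicity of $\lambda$ on $(s,t)$ and apply Lemma \ref{lem:C} to the transported Loewner family $g_L(K_r)$, which is the family actually driven by $\lambda$ (per the discussion preceding Lemma \ref{lem:A}), before pulling the conclusion back through $g_L$.

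First observe that $\lambda(s)\neq\lambda(t)$, since equality would give $0=|\lambda(t)-\lambda(s)|>|\lambda(t)-\lambda(r)|\geq 0$, which is impossible. In the case $\lambda(s)<\lambda(t)$, the hypothesis rewrites as
\begin{equation}
\lambda(t)-\lambda(s)>|\lambda(t)-\lambda(r)|\geq\lambda(t)-\lambda(r),
\end{equation}
giving $\lambda(r)>\lambda(s)$ for every $s<r<t$; the opposite case is symmetric and yields $\lambda(r)<\lambda(s)$ on $(s,t)$. Either way, $\lambda$ satisfies exactly the one-sided monotonicity hypothesis of Lemma \ref{lem:C}.

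Next, I would apply Lemma \ref{lem:C} to the pushed-forward family $(g_L(K_r))_r$ in place of $(K_r)_r$. By Lemma \ref{lem:lsw2.8} together with equation (\ref{eqn:timemapcomposition}) and the paragraph following it, this family (after the standard reparameterization by half-plane capacity, which preserves the ordering of $r$) is a Loewner family driven by $\lambda$, and the monotonicity from the previous step survives reparameterization. So Lemma \ref{lem:C} yields $\overline{g_L(K_s)}\cap\partial_{\mathbb{H}} g_L(K_t)\neq\emptyset$. Finally, since $\overline{K_T}$ and $\overline{L}$ are disjoint compacta in $\overline{\mathbb{H}}$, $g_L$ extends to a homeomorphism from a relatively open neighborhood of $\overline{K_T}$ in $\overline{\mathbb{H}}\setminus\overline{L}$ onto its image, which carries $\overline{K_s}$ bijectively to $\overline{g_L(K_s)}$ and turns any approximating sequence in $\mathbb{H}\setminus g_L(K_t)$ into one in $\mathbb{H}\setminus(L\cup K_t)\subseteq\mathbb{H}\setminus K_t$; pulling a witness back therefore produces a point of $K_s\cap\partial_{\mathbb{H}} K_t$. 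The main obstacle is the bookkeeping around the reparameterization and verifying that $g_L$ and its boundary extension cleanly intertwine closures with the $\mathbb{H}$-boundary operator; the substantive content reduces to monotonicity plus Lemma \ref{lem:C}.
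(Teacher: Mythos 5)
Your proposal is correct and follows essentially the same route as the paper: derive strict one-sided monotonicity of $\lambda$ on $(s,t)$ from the hypothesis, apply Lemma \ref{lem:C} to the family $g_L(K_r)$ driven by $\lambda$, and pull back through $g_L$ using disjointness of $L$ and $K_T$. Your explicit handling of the capacity reparameterization and the boundary pull-back is somewhat more careful than the paper's, which simply asserts these steps.
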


\begin{proof}
Since $L\cap K_T=\emptyset$, $g_L(K_t)$ is a Loewner family and furthermore is driven by $\lambda$. If $|\lambda(t)-\lambda(s)|>|\lambda(t)-\lambda(r)|$ for $s<r<t$, then clearly $\lambda(s)\not=\lambda(r)$ for $s<r<t$. Since $\lambda$ is continuous either $\lambda(s)>\lambda(r)$ for all $s<r<t$ or $\lambda(s)<\lambda(r)$ for all $s<r<t$. By Lemma \ref{lem:C}, $\partial_{\mathbb{H}} g_L(K_t)\cap g_L(K_s)\not=\emptyset$. By the disjointness of $K_T$ and $L$, $\partial_{\mathbb{H}} K_t\cap K_s\not=\emptyset$ as well.
\end{proof}

Whenever we use the families $K_t$ and $L_s$, we will assume that $K_T$ is on the left side of $L_S$. We note that the next lemma is a generalization of Lemma 3.5 from \cite{RS14}. The proof of part (a) uses the key ideas brought up in the corresponding proof in \cite{RS14}, but the proof of part (b) is fundamentally different.

\begin{lemma}\label{lem:3.6.8}
Let $(K_t)_{t\in[0,T]}$ and $(L_v)_{v\in[0,S]}$ be two disjoint Loewner families. Then, for any $t\in[0,T]$ and $s\in[0,S]$,
\begin{itemize}
    \item[(a)] $g_{K_T\cup L_S}^-(K_T)\leq g_{K_t\cup L_s}^-(K_T)< g_{K_t\cup L_s}^+(L_S)\leq g_{K_T\cup L_S}^+(L_S)$
    \item[(b)] $g_{K_t\cup L_s}^-(L_S)-g_{K_t\cup L_s}^+(K_T)\geq g_{K_T\cup L_S}^-(L_S)-g_{K_T\cup L_S}^+(K_T)$.
\end{itemize}
\end{lemma}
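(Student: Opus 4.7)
The plan is to reduce both parts of Lemma \ref{lem:3.6.8} to Lemma \ref{lem:3.2.3} applied to an intermediate hull. Setting $A_1 = K_t\cup L_s \subseteq K_T\cup L_S = A_2$ and applying (\ref{eqn:mapcomposition}) gives the composition identity $g_{K_T\cup L_S} = g_D \circ g_{K_t\cup L_s}$, where $D$ is the (closed up) image $g_{K_t\cup L_s}(A_2\setminus A_1)$, itself a hull. Abbreviate $a_1' = g_{K_t\cup L_s}^-(K_T)$, $a_2' = g_{K_t\cup L_s}^+(K_T)$, $b_1' = g_{K_t\cup L_s}^-(L_S)$, $b_2' = g_{K_t\cup L_s}^+(L_S)$, and let $a_1,a_2,b_1,b_2$ denote the analogous quantities defined using $g_{K_T\cup L_S}$.

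The central structural observation is
\begin{equation*}
\overline{D}\cap\mathbb{R}\subseteq [a_1',a_2']\cup[b_1',b_2'] \quad\text{and}\quad a_2'<b_1'.
\end{equation*}
For the inclusion: $K_T\setminus K_t\subseteq\mathbb{H}$, so $g_{K_t\cup L_s}(K_T\setminus K_t)$ can touch $\mathbb{R}$ only at images of points where $\overline{K_T\setminus K_t}$ meets $\overline{K_t}$, and such points lie in $[g_{K_t\cup L_s}^-(K_t),g_{K_t\cup L_s}^+(K_t)]\subseteq[a_1',a_2']$ (the latter containment because any prime end with impression meeting $K_t\subseteq K_T$ also has impression meeting $K_T$). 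An analogous argument covers the $L$-side. The strict gap $a_2'<b_1'$ comes from $\overline{K_T}\cap\overline{L_S}=\emptyset$ together with the left-right convention: a separating real point $c$ has $g_{K_t\cup L_s}$-image strictly between the two $\mathbb{R}$-intervals.

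For part (a), the middle inequality $a_1'<b_2'$ is immediate from $a_1'\leq a_2'<b_1'\leq b_2'$. For the outer inequality $a_1\leq a_1'$, pick any $\alpha<a_1'$; by the structural observation $\alpha\notin\overline{D}$, so Lemma \ref{lem:3.2.3}(a) applied to $D$ gives $g_D(\alpha)\leq\alpha$. The composition yields $g_{K_T\cup L_S}(g_{K_t\cup L_s}^{-1}(\alpha))=g_D(\alpha)\leq\alpha$, and letting $\alpha\nearrow a_1'$ the real preimages $g_{K_t\cup L_s}^{-1}(\alpha)$ approach the leftmost boundary prime end of $K_T$ in $\mathbb{H}\setminus A_2$, whose $g_{K_T\cup L_S}$-image is $a_1$ by definition; hence $a_1\leq a_1'$. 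The symmetric argument on the right yields $b_2'\leq b_2$.

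For part (b), the structural observation also gives $(a_2',b_1')\subseteq\mathbb{R}\setminus\overline{D}$, so Lemma \ref{lem:3.2.3}(b) applied to $D$ yields $|g_D(\beta)-g_D(\alpha)|\leq\beta-\alpha$ for all $\alpha,\beta\in(a_2',b_1')$. Taking $\alpha\searrow a_2'$ and $\beta\nearrow b_1'$ and identifying the one-sided limits of $g_D$ with $a_2$ and $b_1$ (as in (a)), we obtain $b_1-a_2\leq b_1'-a_2'$, which rearranges to the claim. The main obstacle is the boundary bookkeeping --- rigorously identifying each one-sided limit of $g_D$ at $a_i',b_i'$ with the corresponding $a_i,b_i$ --- which reduces to the fact that $g_{K_t\cup L_s}$ extends to a homeomorphism of Carath\'eodory compactifications, carrying extremal approach sequences to extremal approach sequences. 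Once that identification is in hand, both inequalities drop out of Lemma \ref{lem:3.2.3} applied to $D$.
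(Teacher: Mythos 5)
Your proposal is correct and follows essentially the same route as the paper: both form the intermediate hull $D=\overline{g_{K_t\cup L_s}\bigl((K_T\setminus K_t)\cup(L_S\setminus L_s)\bigr)}$, observe that $\overline{D}\cap\mathbb{R}$ sits in two separated intervals controlled by $g^{\pm}_{K_t\cup L_s}(K_T)$ and $g^{\pm}_{K_t\cup L_s}(L_S)$, and then apply Lemma \ref{lem:3.2.3}(a) for part (a) and Lemma \ref{lem:3.2.3}(b) on the gap for part (b). The only difference is cosmetic: where you take a one-sided limit $\alpha\nearrow a_1'$ and appeal to ``boundary bookkeeping,'' the paper handles the same issue by splitting into the cases $g^-_{K_t\cup L_s}(K_T)<g^-_{K_t\cup L_s}(K_T\setminus K_t)$ (where $g_D$ is continuous at the point) and equality (where it uses $g_D^-$ directly).
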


\begin{proof}[Proof of (a)]
First, the middle inequality is immediate since $\overline{K_T}\cap \overline{L_S}=\emptyset$.

Second, we will prove the first inequality. Let $t\in[0,T]$ and $s\in[0,S]$. Define 
\begin{equation}
    A_1=\overline{g_{K_t\cup L_s}(K_T\setminus K_t)}
    \text{ and }
    A_2=\overline{g_{K_t\cup L_s}(L_S\setminus L_s)}.
\end{equation}
Then $A_1\cap\mathbb{H}$ and $A_2\cap\mathbb{H}$ are disjoint hulls. Let 
$a=g_{K_t\cup L_s}^-(K_T\setminus K_t)$ and
$b=g_{K_t\cup L_s}^+(L_S\setminus L_s)$.
Since $K_T\setminus K_t\subseteq K_T$, $g_{K_t\cup L_s}^-(K_T)\leq a$.
Define $A=A_1\cup A_2$ which is a hull with $\overline{A}\cap\mathbb{R}\subseteq[a,b]$.

If $g_{K_t\cup L_s}^-(K_T)<a$, then by Lemma \ref{lem:3.2.3} (a),
\begin{equation}
    g_{K_T\cup L_S}^-(K_T)=g_A(g_{K_t\cup L_s}^-(K_T))\leq g_{K_t\cup L_s}^-(K_T).
\end{equation}
If $g_{K_t\cup L_s}^-(K_T)=a$, then as $g_A\circ g_{K_t\cup L_s}=g_{K_T\cup L_S}$,
\begin{equation}
    g_{K_T\cup L_S}^-(K_T)= g_A^-(g_{K_t\cup L_s}(K_T))\leq g_{K_t\cup L_s}^-(K_T).
\end{equation}
In both cases, $g_{K_T\cup L_S}^-(K_T)\leq g_{K_t\cup L_s}^-(K_T)$.

Lastly, the other inequality follows in the same manner.
\end{proof}

\begin{proof}[Proof of (b)]
Let $A=\overline{g_{K_t\cup L_s}((K_T\setminus K_t)\cup (L_S\setminus L_s))}$. Then $A\cap\mathbb{H}$ is a hull with
\begin{equation}
    A\cap\mathbb{R}=[g_{K_t\cup L_s}^-(K_T\setminus K_t),g_{K_t\cup L_s}^+(K_T\setminus K_t)]\cup[g_{K_t\cup L_s}^-(L_S\setminus L_s),g_{K_t\cup L_s}^+(L_S\setminus L_s)]
\end{equation}

Let $(x_n)_{n=1}^{\infty},(y_n)_{n=1}^{\infty}\subset\mathbb{R}$ so that $x_n\downarrow g_{K_t\cup L_s}^+(K_T)$, $y_n\uparrow g_{K_t\cup L_s}^-(L_S)$, and
\begin{equation}
    g_{K_t\cup L_s}^+(K_T)<x_n<\frac{g_{K_t\cup L_s}^+(K_T)+g_{K_t\cup L_s}^-(L_S)}{2}<y_n<g_{K_t\cup L_s}^-(L_S)
\end{equation}
Then for every $n$, $0<g_A(y_n)-g_A(x_n)\leq y_n-x_n$ by Lemma \ref{lem:3.2.3} (b) as $(x_n,y_n)\subseteq\mathbb{R}\setminus A$. Since $g_A\circ g_{K_t\cup L_s}=g_{K_T\cup L_s}$,
\begin{equation}
    g_{K_t\cup L_s}^-(L_S)-g_{K_t\cup L_s}^+(K_T)
    \geq g_A^-(g_{K_t\cup L_s}(L_S))-g_A^+(g_{K_t\cup L_s}(K_T))
    =g_{K_T\cup L_S}^-(L_S)-g_{K_T\cup L_S}^+(K_T).
\end{equation}
\end{proof}

\noindent We will now generalize the notion of Loewner families to the multiple hull setting.

\begin{defn}
Let $K_1,...,K_n$ be disjoint Loewner hulls and $\hcap(K_1\cup\cdots\cup K_n)=2T$. For $j=1,...,n$ let $K_t^j$ be an increasing family of hulls so that
\begin{itemize}
    \item $t\mapsto\hcap(K_t^j)$ is nondecreasing
    \item $\hcap(K_t^1\cup\cdots\cup K_t^n)=2t$ for $t\in[0,T]$
    \item $K_T^j=K_j$
\end{itemize}
We call $K_t=(K_t^1,...,K_t^n)$ a Loewner parameterization for the hull $K_1\cup\cdots\cup K_n$.
\end{defn}

\section{Loewner Parameterization Precompactness}\label{precompactness}

The generalization of Theorem 1.1 in \cite{RS14}, Theorem \ref{thm:3.6.2} here, follows with almost the same proof due to prime ends generalizing tips so appropriately. In \cite{RS14} a few technical lemmas are shown, then Theorems 1.1 and 2.2 are proven. Since credit for the proofs goes to the authors of \cite{RS14}, we will state results where the proofs generalize quickly without proof and direct the reader to \cite{RS14}.

\begin{lemma}[3.2 \cite{RS14}]\label{lem:3.6.6}
Let $K_t$ be a Loewner family. Let $L$ be a hull disjoint from $K_T$. Then there exists a constant $c>0$ so that for all $0\leq s<t\leq T$
\begin{equation}
    c\leq\frac{\hcap(K_t\cup L)-\hcap(K_s\cup L)}{t-s}
\end{equation}
\end{lemma}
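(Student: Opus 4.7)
My plan is to show that the right derivative of $t\mapsto\hcap(K_t\cup L)$ is uniformly bounded below on $[0,T]$ and then integrate. By Lemma \ref{lem:3.2.2}(b) and the composition formula (\ref{eqn:mapcomposition}), writing $M_s:=g_{K_s}(L)$ and $A_{s,\tau}:=g_{K_s}(K_{s+\tau}\setminus K_s)$,
\[
\hcap(K_t\cup L)-\hcap(K_s\cup L)=\hcap\bigl(g_{K_s\cup L}(K_t\setminus K_s)\bigr)=\hcap\bigl(g_{M_s}(A_{s,t-s})\bigr),
\]
where $(A_{s,\tau})_{\tau\in[0,T-s]}$ is a Loewner family (starting from the empty hull) with $\hcap(A_{s,\tau})=2\tau$, driven by $\tau\mapsto\lambda(s+\tau)$. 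Since $K_T$ and $L$ are disjoint, $g_{K_s}$ sends them to disjoint closed sets in $\overline{\mathbb{H}}$, so $g_{M_s}$ is conformal on a neighborhood of $\overline{A_{s,T-s}}$. Applying Lemma \ref{lem:lsw2.8} to $(A_{s,\tau})$ with $G=g_{M_s}$ then yields
\[
\partial_t\hcap(K_t\cup L)\big|_{t=s^+}=2\,g_{M_s}'(\lambda(s))^2.
\]

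The heart of the argument is to show $g_{M_s}'(\lambda(s))\geq c_0>0$ uniformly in $s\in[0,T]$. For each $s$, the tip prime end $p_s$ (which $g_{K_s}$ sends to $\lambda(s)$) is separated from $L$ in the Carath\'eodory compactification of $\mathbb{H}\setminus K_s$, so $\lambda(s)\notin\overline{M_s}$ and $r_s:=\mathrm{dist}(\lambda(s),\overline{M_s})>0$. Carath\'eodory continuity of $s\mapsto K_s$ makes $g_{K_s}$ depend continuously on $s$ uniformly on compact subsets of $\mathbb{H}\setminus K_T$; hence $M_s=g_{K_s}(L)$ is continuous in $s$ in Hausdorff distance, $\{M_s\}_{s\in[0,T]}$ is uniformly bounded, and $r_s$ is continuous on the compact interval $[0,T]$. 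Thus $d_0:=\inf_s r_s>0$. Via Schwarz reflection, $g_{M_s}$ extends univalently to $D(\lambda(s),d_0)$, and a Koebe-type distortion estimate (applied, e.g., to $g_{M_s}^{-1}$, using the uniform size bound on $M_s$) converts the geometric separation $d_0$ into a uniform lower bound $g_{M_s}'(\lambda(s))\geq c_0$.

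Finally, $t\mapsto\hcap(K_t\cup L)$ is Lipschitz (constant $\leq 2$, from Lemma \ref{lem:3.2.2}(a) with $A_1=K_s\cup L$, $A_2=K_t$), hence absolutely continuous, so integrating the derivative bound gives $\hcap(K_t\cup L)-\hcap(K_s\cup L)\geq 2c_0^2(t-s)$, proving the lemma with $c=2c_0^2$. The main obstacle is the third step above: converting the pointwise separation $r_s>0$ into a uniform conformal derivative bound. This requires simultaneously invoking Carath\'eodory continuity of the Loewner family, compactness of $[0,T]$, and Koebe distortion after Schwarz reflection—a technically delicate combination.
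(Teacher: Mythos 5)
Your argument is correct, and it is essentially the localization strategy that the paper defers to: the paper gives no proof of this lemma, stating only that the proof of Lemma 3.2 in \cite{RS14} ``generalizes quickly,'' and what you have written is precisely that generalization --- decompose the increment as $\hcap(g_{M_s}(A_{s,t-s}))$, compute the right derivative $2\,g_{M_s}'(\lambda(s))^2$ via Lemma \ref{lem:lsw2.8}, and get uniformity in $s$ from continuity and compactness before integrating the Lipschitz function $t\mapsto\hcap(K_t\cup L)$. The two points you correctly identify as needing care in passing from slits to hulls --- that $\lambda(s)$ stays uniformly away from $\overline{M_s}$ (a prime-end/impression argument, since $I(p_s)\subseteq\overline{K_T}$ is disjoint from $\overline{L}$) and the conversion of that separation into a uniform lower bound on $g_{M_s}'(\lambda(s))$ --- both go through; the latter is only sketched in your write-up, but the standard chain of Koebe distortion estimates from $\lambda(s)$ out to the region near infinity where $g_{M_s}'\approx 1$, using the uniform bounds on $\operatorname{diam}(M_s)$ and on $d_0$, closes it.
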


\begin{lemma}[3.3 \cite{RS14}]\label{lem:3.6.7}
Let $(K_t)_{t\in[0,T_1]}$ and $(L_t)_{t\in[0,T_2]}$ be two disjoint Loewner families. Then there is a constant $c>0$ so that
\begin{equation}
    c\leq\frac{\hcap(K_{t_1}\cup L_{t_2})-\hcap(K_{s_1}\cup L_{s_2})}{t_j-s_j}
\end{equation}
for all $0\leq s_j<t_j\leq T_j$ and $j=1,2$.
\end{lemma}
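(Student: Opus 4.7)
The plan is to reduce Lemma \ref{lem:3.6.7} to the single-family estimate Lemma \ref{lem:3.6.6} via monotonicity, then upgrade the constant there to be uniform in the second family by a continuity-plus-compactness argument. I will sketch the argument for $j=1$; the $j=2$ case follows by interchanging the roles of the two families.

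Since $L_{s_2}\subseteq L_{t_2}$ and so $K_{t_1}\cup L_{s_2}\subseteq K_{t_1}\cup L_{t_2}$, Lemma \ref{lem:3.2.2}(b) gives
\[
    \hcap(K_{t_1}\cup L_{t_2})-\hcap(K_{s_1}\cup L_{s_2}) \;\geq\; \hcap(K_{t_1}\cup L_{s_2})-\hcap(K_{s_1}\cup L_{s_2}).
\]
For each fixed $s_2$, Lemma \ref{lem:3.6.6} applied to the family $K_t$ and the hull $L_{s_2}$ (disjoint from $K_{T_1}$) yields a constant $c(s_2)>0$ so that the right-hand side is at least $c(s_2)(t_1-s_1)$. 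To pin down $c(s_2)$ explicitly, I would use the restart $\widetilde K_\tau := g_{K_r}(K_{r+\tau}\setminus K_r)$, which is a Loewner family in standard parameterization driven by $\lambda_K(r+\tau)$, where $\lambda_K$ is the driving function of $K_t$. Combining the identity $g_{K_r\cup L_{s_2}}=g_{g_{K_r}(L_{s_2})}\circ g_{K_r}$ with Lemma \ref{lem:3.2.2}(b), and then applying Lemma \ref{lem:lsw2.8} with $G=g_{g_{K_r}(L_{s_2})}$ (which extends analytically through $\lambda_K(r)\in\mathbb{R}$ by Schwarz reflection, since $\lambda_K(r)\notin\overline{g_{K_r}(L_{s_2})}$), produces
\[
    \partial_r\hcap(K_r\cup L_{s_2}) \;=\; 2\bigl(g'_{g_{K_r}(L_{s_2})}(\lambda_K(r))\bigr)^2.
\]
Integrating in $r$, one may take $c(s_2)=2\inf_{r\in[0,T_1]}\bigl(g'_{g_{K_r}(L_{s_2})}(\lambda_K(r))\bigr)^2$.

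It remains to show $c:=2\inf_{(r,s_2)\in[0,T_1]\times[0,T_2]}\bigl(g'_{g_{K_r}(L_{s_2})}(\lambda_K(r))\bigr)^2>0$. Positivity at each $(r,s_2)$ follows from the hypothesis $\overline{K_{T_1}}\cap\overline{L_{T_2}}=\emptyset$: the generalized tip of $K_r$ has a neighborhood in $\mathbb{H}\setminus K_r$ disjoint from $L_{s_2}$, so its image under $g_{K_r}$ is a neighborhood of $\lambda_K(r)$ disjoint from $g_{K_r}(L_{s_2})$, whence the derivative is strictly positive. The function $(r,s_2)\mapsto g'_{g_{K_r}(L_{s_2})}(\lambda_K(r))$ is continuous on the compact rectangle $[0,T_1]\times[0,T_2]$, using Carath\'eodory continuity of $r\mapsto K_r$ and $s_2\mapsto L_{s_2}$, continuity of $\lambda_K$, and the Carath\'eodory kernel theorem applied to the Schwarz-reflected complements to get continuous dependence of the conformal map and its first derivative at the boundary point $\lambda_K(r)$. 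Compactness then forces the infimum to be attained and positive, producing the constant $c>0$ and hence the lemma. The main obstacle is precisely this joint continuity at the boundary evaluation point $\lambda_K(r)$, which requires the Schwarz-reflection extension and verification of kernel convergence for the reflected domains rather than merely the one-sided Loewner domains; once that is in hand, everything else is routine.
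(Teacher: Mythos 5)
Your first half is exactly the intended route (the paper defers this proof to \cite{RS14}, whose argument it is): the monotonicity reduction via Lemma \ref{lem:3.2.2}(b) to discard the increment of the second family, and the rate identity $\partial_r\hcap(K_r\cup L_{s_2})=2\bigl(g'_{g_{K_r}(L_{s_2})}(\lambda_K(r))\bigr)^2$ obtained from Lemma \ref{lem:lsw2.8} applied to the restarted family, are both correct and are what Lemma \ref{lem:3.6.6} rests on. The divergence is in how you make the lower bound on $g'_{g_{K_r}(L_{s_2})}(\lambda_K(r))$ uniform in $(r,s_2)$, and this is where there is a genuine gap. You replace a quantitative estimate by ``pointwise positivity plus joint continuity plus compactness,'' but the continuity you need is of a derivative evaluated at a moving boundary point of a moving domain; as you concede, it requires Schwarz reflection through a real interval around $\lambda_K(r)$ free of $\overline{g_{K_r}(L_{s_2})}$ and kernel convergence of the reflected domains, and you assert this rather than prove it. The essential missing input is a lower bound on the distance from the image of the tip of $K_r$ to the image of $L_{s_2}$: without it the reflection windows can a priori degenerate as $(r,s_2)$ varies, and the kernel argument at the evaluation point does not get off the ground. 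That separation is not free --- it is the content of Lemma \ref{lem:3.6.8}(b), which gives a gap $d_0:=g_{K_{T_1}\cup L_{T_2}}^-(L_{T_2})-g_{K_{T_1}\cup L_{T_2}}^+(K_{T_1})>0$ valid uniformly over all intermediate times.

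Once you invoke that lemma, the compactness detour becomes unnecessary and the constant comes out explicitly, which is how \cite{RS14} proceeds: writing the Nevanlinna representation $g_{g_{K_r}(L_{s_2})}^{-1}(z)=z+\int_{\mathbb{R}}d\mu(x)/(x-z)$, the support of $\mu$ lies in $\overline{g_{K_r\cup L_{s_2}}(L_{s_2})}\cap\mathbb{R}$, which by Lemma \ref{lem:3.6.8}(b) (and Lemma \ref{lem:A}) is at distance at least $d_0$ from $w=g_{K_r\cup L_{s_2}}(p_r)$; since $\mu(\mathbb{R})=\hcap(g_{K_r}(L_{s_2}))\leq\hcap(L_{T_2})$ by Lemma \ref{lem:3.2.2}(c), one gets $(g_{g_{K_r}(L_{s_2})}^{-1})'(w)\leq 1+\hcap(L_{T_2})/d_0^2$ and hence $c=2\bigl(1+\hcap(L_{T_2})/d_0^2\bigr)^{-2}$ works for $j=1$ (symmetrically for $j=2$). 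So: your architecture is right and your reduction is the standard one, but the uniformity step needs the separation estimate either way, and the direct route through it both closes your gap and yields the constant without any Carath\'eodory-kernel machinery.
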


\begin{lemma}[3.6 \cite{RS14}]\label{lem:3.6.9}
Let $(K_t)_{t\in[0,T]}$ and $(L_v)_{v\in[0,S]}$ be two disjoint Loewner families. Then there exists a constant $M>0$ so that
\begin{equation}
    |g_{K_t\cup L_u}(p)-g_{K_t\cup L_v}(p)|\leq M|v-u|
\end{equation}
for any $t\in[0,T]$ and $u,v\in[0,S]$ where $p$ is the prime end corresponding to $K_t$.
\end{lemma}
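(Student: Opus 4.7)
The plan is to reduce the difference $g_{K_t\cup L_v}(p)-g_{K_t\cup L_u}(p)$ to the displacement of a single real point under one conformal map, and then control that displacement via the Nevanlinna representation together with the gap estimate from Lemma~\ref{lem:3.6.8}(b). Assume without loss of generality that $u<v$. Since $L_u\subset L_v$ and both are disjoint from $K_t$, the composition rule (\ref{eqn:mapcomposition}) gives $g_{K_t\cup L_v}=g_A\circ g_{K_t\cup L_u}$, where $A:=g_{K_t\cup L_u}(L_v\setminus L_u)$. Set $x:=g_{K_t\cup L_u}(p)\in\mathbb{R}$, so the target quantity is $|g_A(x)-x|$. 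Because $K_T$ lies on the left of $L_S$ with disjoint closures, $x$ lies strictly to the left of $\overline{A}\cap\mathbb{R}$, and Lemma~\ref{lem:3.2.3}(a) gives $g_A(x)\leq x$.

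Now apply the Nevanlinna formula $g_A^{-1}(w)=w+\int\frac{d\mu_A(s)}{s-w}$, where $\mu_A(\mathbb{R})=\hcap(A)$ and $\mathrm{supp}(\mu_A)\subseteq\overline{A}\cap\mathbb{R}$, at $w=g_A(x)$; since $g_A(x)\leq x$ and each $s$ in the support exceeds $x$,
\begin{equation*}
0\;\leq\;x-g_A(x)\;=\;\int\frac{d\mu_A(s)}{s-g_A(x)}\;\leq\;\frac{\hcap(A)}{\min(\overline{A}\cap\mathbb{R})-x}.
\end{equation*}
It then remains to bound $\hcap(A)$ from above and $\min(\overline{A}\cap\mathbb{R})-x$ from below, each uniformly in $(t,u,v)$.

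For the capacity, Lemma~\ref{lem:3.2.2}(b) applied to $K_t\cup L_u\subset K_t\cup L_v$ gives $\hcap(A)=\hcap(K_t\cup L_v)-\hcap(K_t\cup L_u)$, while Lemma~\ref{lem:3.2.2}(a) with $A_1=K_t\cup L_u$ and $A_2=L_v$ (whose intersection is the hull $L_u$ and whose union is $K_t\cup L_v$) yields $\hcap(K_t\cup L_u)+\hcap(L_v)\geq\hcap(K_t\cup L_v)+\hcap(L_u)$, so $\hcap(A)\leq\hcap(L_v)-\hcap(L_u)=2(v-u)$. For the separation, $\min(\overline{A}\cap\mathbb{R})=g^-_{K_t\cup L_u}(L_v\setminus L_u)\geq g^-_{K_t\cup L_u}(L_S)$ and $x\leq g^+_{K_t\cup L_u}(K_t)\leq g^+_{K_t\cup L_u}(K_T)$, so
\begin{equation*}
\min(\overline{A}\cap\mathbb{R})-x\;\geq\;g^-_{K_t\cup L_u}(L_S)-g^+_{K_t\cup L_u}(K_T)\;\geq\;g^-_{K_T\cup L_S}(L_S)-g^+_{K_T\cup L_S}(K_T)\;=:\;d_0\;>\;0,
\end{equation*}
where the second inequality is Lemma~\ref{lem:3.6.8}(b) and $d_0>0$ because $\overline{K_T}\cap\overline{L_S}=\emptyset$. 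Combining, $|g_{K_t\cup L_v}(p)-g_{K_t\cup L_u}(p)|\leq 2(v-u)/d_0$, and the symmetric case $v<u$ is identical, so $M:=2/d_0$ works.

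The main obstacle is securing the separation constant $d_0$ uniformly in $(t,u,v)$. A priori the gap between the images of $K_T$ and $L_S$ under $g_{K_t\cup L_u}$ could collapse as one maps down more of either family, but Lemma~\ref{lem:3.6.8}(b) is built precisely for this: it says the gap is minimized at the final configuration $t=T$, $u=S$, where it is the fixed positive number $d_0$. Everything else is routine bookkeeping with half-plane capacities and the Nevanlinna kernel.
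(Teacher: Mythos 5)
Your overall strategy is the one the paper itself relies on: the paper does not reprove this lemma but defers to the proof of Lemma 3.6 in \cite{RS14}, indicating only that the base points of the slits should be replaced by the full hulls $K_T$ and $L_S$. Your argument is exactly that adaptation written out in full: the reduction $g_{K_t\cup L_v}=g_A\circ g_{K_t\cup L_u}$ with $A=g_{K_t\cup L_u}(L_v\setminus L_u)$, the capacity bound $\hcap(A)\leq 2(v-u)$ via Lemma \ref{lem:3.2.2}(a),(b), and the identification of $d_0=g^-_{K_T\cup L_S}(L_S)-g^+_{K_T\cup L_S}(K_T)>0$ as the uniform separation constant supplied by Lemma \ref{lem:3.6.8}(b) are all correct and are precisely the right ingredients.

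There is, however, one genuine error in the Nevanlinna step. The measure $\mu_A$ in $g_A^{-1}(w)=w+\int\frac{d\mu_A(s)}{s-w}$ is supported where $g_A^{-1}$ fails to extend through $\mathbb{R}$, which is the \emph{image} set $g_A(\overline{A})\cap\mathbb{R}\subseteq[g_A^-(A),g_A^+(A)]$, not $\overline{A}\cap\mathbb{R}$. Since $g_A$ moves points lying to the left of $A$ further left (Lemma \ref{lem:3.2.3}(a)), $g_A^-(A)$ can lie strictly to the left of $\min(\overline{A}\cap\mathbb{R})$, so your lower bound $s-g_A(x)\geq\min(\overline{A}\cap\mathbb{R})-x$ for $s\in\mathrm{supp}(\mu_A)$ does not follow as written. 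The repair is immediate and uses the same tools: bound $s-g_A(x)\geq g_A^-(A)-g_A(x)$, observe that $g_A^-(A)=g^-_{K_t\cup L_v}(L_v\setminus L_u)\geq g^-_{K_t\cup L_v}(L_S)$ while $g_A(x)=g_{K_t\cup L_v}(p)\leq g^+_{K_t\cup L_v}(K_T)$, and apply Lemma \ref{lem:3.6.8}(b) at the pair $(t,v)$ rather than $(t,u)$ to conclude $g_A^-(A)-g_A(x)\geq d_0$. With that substitution your estimate $|g_{K_t\cup L_v}(p)-g_{K_t\cup L_u}(p)|\leq 2|v-u|/d_0$ and the constant $M=2/d_0$ stand.
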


The proof of Lemma \ref{lem:3.6.9} from \cite{RS14}, deals with images of base points of slits (specifically, $p_1$ and $p_2$). In particular, the proof looks at the real points that correspond to the prime ends $p_1$ and $p_2$. This is equivalent to mapping down both slits and looking at the corresponding line segments. In order to prove this lemma, we replace $p_1$ by $K_T$ and $p_2$ be $L_S$, which gives the analogue of mapping down both slits. The change from base points of a slit to entire hulls in the proof of Lemma \ref{lem:3.6.9} comes from the fact that for a slit, the two images of the base are the smallest and largest real points in the image of the mapped down slit, whereas with hulls, this corresponds to mapping down the entire hull.

\begin{lemma}[3.7 \cite{RS14}]\label{lem:3.6.10}
Let $K_t$ be a Loewner family driven by $U:[0,T]\to\mathbb{R}$. Let $L$ be a hull disjoint from $K_T$. Let $\lambda(t)=g_{g_{K_t}(L)}(U(t))$. Then there exists $\omega:[0,T]\to[0,\infty)$ increasing with $\lim_{\delta\downarrow 0}\omega(\delta)=\omega(0)=0$ such that
\begin{equation}\label{eqn:3610.3}
    |g_{K_t\cup L}(p_t)-g_{K_s\cup L}(p_s)|\leq\omega(|t-s|)
\end{equation}
for $s,t\in[0,T]$, where $p_t$ and $p_s$ are the prime ends corresponding to $\lambda(t)$ and $\lambda(s)$ respectively.
\end{lemma}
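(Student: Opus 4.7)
The plan is to reduce the inequality to a statement about the continuity of the scalar function $\lambda(t)=g_{K_t\cup L}(p_t)$ on the compact interval $[0,T]$. By the discussion surrounding equation~(\ref{eqn:timemapcomposition}), this function is exactly the $\lambda$ defined in the statement via $\lambda(t)=g_{g_{K_t}(L)}(U(t))$, so the displayed bound is simply the assertion that $\lambda$ admits a uniform modulus of continuity on $[0,T]$. Once $\lambda$ is known to be continuous, compactness of $[0,T]$ yields uniform continuity, and one may take
\[
\omega(\delta):=\sup\bigl\{|\lambda(t)-\lambda(s)|:s,t\in[0,T],\,|t-s|\le\delta\bigr\},
\]
which is increasing with $\omega(0^+)=\omega(0)=0$.

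The substantive step is therefore to prove that $\lambda$ is continuous on $[0,T]$. I plan to view $\lambda$ as the driving function of the conformally transplanted family $(g_L(K_t))_{t\in[0,T]}$. Because $\overline{L}$ and $\overline{K_T}$ are disjoint, the map $g_L$ is conformal on an open neighborhood (in $\overline{\mathbb{H}}$) of $\overline{K_T}$ and continuous there with real boundary values real, so Lemma~\ref{lem:lsw2.8} applies. To reconcile this with the half-plane-capacity parameterization required by the paper's definition of a Loewner family, I reparameterize by
\[
\tau(t):=\tfrac{1}{2}\hcap\bigl(g_L(K_t)\bigr)=\tfrac{1}{2}\bigl(\hcap(K_t\cup L)-\hcap(L)\bigr),
\]
where the second equality uses Lemma~\ref{lem:3.2.2}(b). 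Lemma~\ref{lem:3.6.6} gives $\tau(t)-\tau(s)\ge c(t-s)$ for $s<t$, so $\tau$ is strictly increasing, and standard continuity of the half-plane capacity along the Loewner chain $g_{K_t\cup L}$ makes $\tau$ continuous; hence $\tau^{-1}:[0,\tau(T)]\to[0,T]$ is a continuous bijection.

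With the reparameterization in hand, $\bigl(g_L(K_{\tau^{-1}(u)})\bigr)_{u\in[0,\tau(T)]}$ is a Loewner family in the paper's sense, and the correspondence from Theorem 2.6 of \cite{lsw} (quoted just after the definition of Loewner family) supplies a continuous driving function $\tilde\lambda$. The identification $g_{g_L(K_t)}(g_L(p_t))=g_{K_t\cup L}(p_t)=\lambda(t)$ from~(\ref{eqn:timemapcomposition}) forces $\tilde\lambda(\tau(t))=\lambda(t)$, so $\lambda=\tilde\lambda\circ\tau$ is a composition of continuous functions and hence continuous on $[0,T]$.

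The main obstacle will be the bookkeeping around the transplanted family: one must justify carefully that $g_L(p_t)$ really is the generalized tip of $g_L(K_t)$ (so that $\tilde\lambda\circ\tau$ genuinely agrees with $\lambda$) and that the reparameterization by $\tau$ preserves the one-to-one correspondence between Loewner families and continuous driving functions. These are direct consequences of Lemma~\ref{lem:lsw2.8} together with the fact that conformal maps extend to homeomorphisms of the Carath\'eodory compactifications, but each point needs to be verified before one can invoke the continuity of $\tilde\lambda$ to conclude.
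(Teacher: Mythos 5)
Your argument is sound for the lemma as literally stated, but it takes a genuinely different route from the paper, and the difference matters downstream. You reduce everything to continuity of $\lambda$ (via Lemma \ref{lem:lsw2.8}, the capacity reparameterization $\tau$, and the LSW correspondence) and then extract a modulus by compactness. The paper instead constructs $\omega$ \emph{explicitly}, as
\[
\omega(\delta)=\sup\{g_{K_t}^+(K_t\setminus K_s)-g_{K_t}^-(K_t\setminus K_s):0\leq s<t\leq T,\ t-s\leq\delta\},
\]
a quantity depending only on the family $(K_t)$ and not on $L$. The inequality is then derived geometrically: Lemma \ref{lem:B} and Corollary \ref{cor:C} replace $s'$ by an $s$ with $K_s\cap\partial_{\mathbb{H}}K_t\neq\emptyset$ without decreasing $|\lambda(t)-\lambda(\cdot)|$, Lemma \ref{lem:A} traps $\lambda(r)$ in the interval $[g_{K_t\cup L}^-(K_t\setminus K_s),g_{K_t\cup L}^+(K_t\setminus K_s)]$, and Lemma \ref{lem:3.2.3}(b) shows that interval is no wider than $g_{K_t}^+(K_t\setminus K_s)-g_{K_t}^-(K_t\setminus K_s)$. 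What the paper's approach buys is precisely the $L$-independence of $\omega$: in Lemma \ref{lem:3.6.11} the present lemma is applied with $L=L_v$ for \emph{varying} $v$, and a single modulus must serve all of them. Your $\omega$ is the modulus of continuity of $\lambda(t)=g_{K_t\cup L}(p_t)$, which depends on $L$; to recover what Lemma \ref{lem:3.6.11} needs you would have to upgrade your compactness argument to equicontinuity of the family $\{t\mapsto g_{K_t\cup L_v}(p_t)\}_{v\in[0,S]}$, which is essentially the content the explicit construction delivers for free. Also note two small points in your reduction: continuity of $\tau$ is not supplied by Lemma \ref{lem:3.6.6} (which only gives the lower bound, hence strict monotonicity); you need an upper bound on capacity increments, e.g.\ via the local growth condition and (\ref{eqn:lawlerremark3.50}). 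And your identification $\tilde\lambda\circ\tau=\lambda$ is legitimate here without circularity, since the paper itself already invokes continuity of $\lambda$ in the proofs of Lemma \ref{lem:A} and Corollary \ref{cor:C}.
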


The proof of (\ref{eqn:3610.3}) in the setting of hulls requires more background work than in the setting of slits. The majority of the results in Section \ref{multiloewnerhulls} are used to show that hulls grow similarly to slits. It is this subtle difference in growth that requires a different proof of (\ref{eqn:3610.3}) than in \cite{RS14}. However, the proof that $\omega(\delta)\to0$ as $\delta\to 0$ is the exact same as in \cite{RS14}, so we refer the reader there for the proof.

\begin{proof}
Let $\omega:[0,T]\to[0,\infty)$ be defined by $\omega(0)=0$ and
\begin{equation}
    \omega(\delta)=\sup\{g_{K_t}^+(K_t\setminus K_s)-g_{K_t}^-(K_t\setminus K_s):0\leq s<t\leq T,t-s\leq\delta\}
\end{equation}
Clearly, $\omega(\delta)$ is increasing.

Next, we will prove the inequality in (\ref{eqn:3610.3}). Let $0\leq s'<t\leq T$ and $\delta'=t-s'$. Lemma \ref{lem:B} and the corollary to Lemma \ref{lem:C} show that there exists $s'\leq s<t$ with $K_s\cap\partial_{\mathbb{H}} K_t\not=\emptyset$ and
\begin{equation}\label{eqn:3610.1}
    |g_{K_t\cup L}(p_t)-g_{K_{s'}\cup L}(p_{s'})|
    =|\lambda(t)-\lambda(s')|
    \leq|\lambda(t)-\lambda(s)|
    =|g_{K_t\cup L}(p_t)-g_{K_s\cup L}(p_s)|
\end{equation}
Let $\delta=t-s\leq\delta'$, so $\omega(\delta)\leq\omega(\delta')$. Since $K_s\cap\partial_{\mathbb{H}} K_t\not=\emptyset$, by Lemma \ref{lem:A} we have for $r\in[s,t]$
\begin{equation}
    g_{K_t\cup L}^-(K_t\setminus K_s)\leq\lambda(r)\leq g_{K_t\cup L}^+(K_t\setminus K_s).
\end{equation}
So,
\begin{equation}
    |g_{K_t\cup L}(p_t)-g_{K_s\cup L}(p_s)|
    =|\lambda(t)-\lambda(s)|
    \leq g_{K_t\cup L}^+(K_t\setminus K_s)-g_{K_t\cup L}^-(K_t\setminus K_s).
\end{equation}
Since $g_{K_t\cup L}=g_{g_{K_t}(L)}\circ g_{K_t}$, Lemma \ref{lem:3.2.3} (b) shows
\begin{equation}\label{eqn:3610.2}
    g_{K_t\cup L}^+(K_t\setminus K_s)-g_{K_t\cup L}^-(K_t\setminus K_s)
    \leq g_{K_t}^+(K_t\setminus K_s)-g_{K_t}^-(K_t\setminus K_s)
    \leq\omega(\delta).
\end{equation}
Combining (\ref{eqn:3610.1}), (\ref{eqn:3610.2}), and $\omega(\delta)\leq\omega(\delta')$ gives the result.
\end{proof}

\begin{lemma}[3.8 \cite{RS14}]\label{lem:3.6.11}
Let $(K_t)_{t\in[0,T]}$ and $(L_v)_{v\in[0,S]}$ be two disjoint Loewner families. Then there exists constants $c,M>0$ and $\omega:[0,T]\to[0,\infty)$ increasing with $\lim_{\delta\downarrow 0}\omega(\delta)=\omega(0)=0$ such that
\begin{align}
    |g_{K_t\cup L_v}(p_t)-g_{K_s\cup L_u}(p_s)|
    &\leq\omega\left(\frac{1}{c}|\hcap(K_t\cup L_v)-\hcap(K_s\cup L_u)|\right)\\
    &\phantom{1}\qquad+\frac{M}{c}|\hcap(K_t\cup L_v)-\hcap(K_s\cup L_u)|
\end{align}
for all $s,t\in[0,T]$ and $u,v\in[0,S]$, where $p_t$ and $p_s$ are the prime ends corresponding to $\lambda(t)$ and $\lambda(s)$, respectively.
\end{lemma}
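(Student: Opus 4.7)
The plan is to split the difference via the triangle inequality by inserting the intermediate quantity $g_{K_s \cup L_v}(p_s)$. The first resulting piece keeps the second family fixed at $L_v$ while moving the first family from $K_s$ to $K_t$, and is bounded by Lemma \ref{lem:3.6.10} applied to $(K_r)$ with auxiliary hull $L_v$. The second piece keeps the first family fixed at $K_s$ while moving the second family from $L_u$ to $L_v$, and is bounded by Lemma \ref{lem:3.6.9}. The claim is symmetric under swapping $s \leftrightarrow t$ and independently $u \leftrightarrow v$, so I may assume $s \leq t$ and $u \leq v$.

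First, Lemma \ref{lem:3.6.10} applied to $(K_r)$ with the hull $L_v$ (which is disjoint from $K_T$) yields
\begin{equation}
|g_{K_t \cup L_v}(p_t) - g_{K_s \cup L_v}(p_s)| \leq \omega(|t-s|).
\end{equation}
The key point is that the $\omega$ explicitly produced in the proof of Lemma \ref{lem:3.6.10} is built only from the quantities $g_{K_r}^{\pm}(K_r \setminus K_{r'})$, so it depends only on the family $(K_r)$ and is uniform in $v$. Next, Lemma \ref{lem:3.6.9} applied at the first-family parameter $t = s$ to $(K_r)$ and $(L_q)$ gives
\begin{equation}
|g_{K_s \cup L_v}(p_s) - g_{K_s \cup L_u}(p_s)| \leq M|v-u|
\end{equation}
for a constant $M > 0$ depending only on the two families; here $p_s$, the prime end corresponding to $\lambda(s)$, is exactly the tip prime end of $K_s$, which is precisely the prime end to which Lemma \ref{lem:3.6.9} applies.

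Finally, I convert the parameter increments $|t-s|$ and $|v-u|$ into the single capacity difference appearing in the conclusion. Since $s \leq t$ and $u \leq v$, Lemma \ref{lem:3.6.7} provides a constant $c > 0$ with
\begin{equation}
c(t-s) \leq \hcap(K_t \cup L_v) - \hcap(K_s \cup L_v) \leq \hcap(K_t \cup L_v) - \hcap(K_s \cup L_u),
\end{equation}
where the second inequality uses monotonicity of half-plane capacity (Lemma \ref{lem:3.2.2} (b)) together with $L_u \subseteq L_v$. An entirely analogous bound controls $c(v-u)$ by the same capacity difference. Combining with the monotonicity of $\omega$ and plugging back into the triangle-inequality decomposition yields the claimed inequality. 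I do not expect any serious obstacle: the only delicate point is checking that the $\omega$ of Lemma \ref{lem:3.6.10} and the $M$ of Lemma \ref{lem:3.6.9} can each be chosen uniformly in the parameter that is being held fixed, and both statements are manifest from the explicit forms in the earlier proofs. In effect this lemma is a uniform repackaging of the three preceding lemmas rather than a source of new geometric content.
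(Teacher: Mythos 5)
Your decomposition is exactly the intended one (the paper defers to \cite{RS14} for this lemma, and the proof there is the same triangle-inequality splitting through $g_{K_s\cup L_v}(p_s)$, with Lemma \ref{lem:3.6.10} controlling the $K$-increment, Lemma \ref{lem:3.6.9} the $L$-increment, and Lemma \ref{lem:3.6.7} converting time increments to capacity increments). Your two uniformity observations are also correct and are genuinely needed: the $\omega$ built in the proof of Lemma \ref{lem:3.6.10} is defined purely from $g_{K_t}^{\pm}(K_t\setminus K_s)$ and so does not depend on which hull $L_v$ is attached, and the $M$ of Lemma \ref{lem:3.6.9} is already quantified uniformly over $t$.

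The one step that does not work as written is the opening reduction. The quantity $|g_{K_t\cup L_v}(p_t)-g_{K_s\cup L_u}(p_s)|$ is invariant only under the \emph{joint} swap $(t,v)\leftrightarrow(s,u)$, not under swapping $s\leftrightarrow t$ and $u\leftrightarrow v$ independently (the independent swap produces the different quantity $|g_{K_t\cup L_u}(p_t)-g_{K_s\cup L_v}(p_s)|$). So the mixed case $s<t$, $v<u$ is not covered by your argument, and in that case the inequality as literally stated can fail: the two capacity increments can cancel, so that $\hcap(K_t\cup L_v)=\hcap(K_s\cup L_u)$ and the right-hand side is $\omega(0)=0$, while $g_{K_t\cup L_v}(p_t)\neq g_{K_s\cup L_u}(p_s)$ in general (e.g.\ two vertical slits, where enlarging $L$ strictly shifts the image of the tip of $K$). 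The resolution is that the lemma carries the implicit nestedness hypothesis $s\leq t$ and $u\leq v$ (equivalently $K_s\cup L_u\subseteq K_t\cup L_v$), as in the original Lemma 3.8 of \cite{RS14}, and this is all that is used in the precompactness argument since Loewner parameterizations are monotone in $t$. Under that hypothesis your proof is complete; you should simply state the hypothesis rather than derive it from a symmetry that is not there.
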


\begin{thm}[2.2 \cite{RS14}]\label{thm:3.6.5}
Let $A$ be a multi-Loewner hull with $\hcap(A)=2T$. For any Loewner parameterization $K_t=(K_t^1,K_t^2)$ of $A$, let $\lambda^j_K$ be the driving function of $K_t^j$ for $j=1,2$. Then the sets
\begin{equation}
    \{\lambda_K^j:[0,T]\to\mathbb{R}|K\text{ Loewner parameterization of }A\}
\end{equation}
are precompact subsets of the Banach space $C([0,T],\mathbb{R})$ for $j=1,2$.
\end{thm}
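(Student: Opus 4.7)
The plan is to apply the Arzel\`a--Ascoli theorem to the family $\{\lambda_K^j:K\text{ a Loewner parameterization of }A\}$ for each $j\in\{1,2\}$. Thus I need two ingredients, both of which must be uniform over the choice of Loewner parameterization $K$: uniform boundedness and equicontinuity, with estimates depending only on the fixed hulls $K^1$ and $K^2$.

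First I would handle uniform boundedness. For any Loewner parameterization $K_t=(K_t^1,K_t^2)$ and any $t\in[0,T]$, the value $\lambda_K^1(t)=g_{K_t^1\cup K_t^2}(p_t^1)$ (where $p_t^1$ is the generalized tip of $K_t^1$, identified via the prime end correspondence of Section \ref{background}) lies in the real interval $[g_{K_t^1\cup K_t^2}^-(K_t^1),g_{K_t^1\cup K_t^2}^+(K_t^1)]$. Taking the role $K_t\to K_t^1$ and $L_s\to K_t^2$ in Lemma \ref{lem:3.6.8}(a) and using $K_t^j\subseteq K^j$, this interval is contained in $[g_{K^1\cup K^2}^-(K^1),g_{K^1\cup K^2}^+(K^1)]$, which is a fixed bounded interval depending only on $A$. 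The same argument works for $\lambda_K^2$ with the roles reversed.

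For equicontinuity, I would apply Lemma \ref{lem:3.6.11} with $(K_t,L_v)=(K_t^1,K_t^2)$. By the defining property of a Loewner parameterization, $\hcap(K_t^1\cup K_t^2)=2t$, so $|\hcap(K_t^1\cup K_t^2)-\hcap(K_s^1\cup K_s^2)|=2|t-s|$. Lemma \ref{lem:3.6.11} then yields
\begin{equation}
|\lambda_K^1(t)-\lambda_K^1(s)|\leq\omega\!\left(\tfrac{2|t-s|}{c}\right)+\tfrac{2M}{c}|t-s|,
\end{equation}
which is a genuine modulus of continuity since $\omega(\delta)\to 0$ as $\delta\to 0$. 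The same estimate with indices swapped controls $\lambda_K^2$.

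The main obstacle is proving that the constants $c,M$ and the function $\omega$ furnished by Lemma \ref{lem:3.6.11} can be chosen independently of the Loewner parameterization, so that the Arzel\`a--Ascoli hypotheses are satisfied uniformly in $K$. To do this I would revisit the proofs of Lemmas \ref{lem:3.6.6}, \ref{lem:3.6.7}, \ref{lem:3.6.9}, and \ref{lem:3.6.10} and observe that at each step the relevant geometric quantities (diameters, separation distance between the two hulls, half-plane capacities, and the sup in the definition of $\omega$) are controlled by $g_{K^1\cup K^2}^{\pm}(K^j)$, $\mathrm{diam}(K^j)$, $\hcap(K^j)$, and the distance between the closures of $K^1$ and $K^2$, together with the universal constant from (\ref{eqn:lawlerremark3.50}). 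In particular, $\omega(\delta)\leq M\cdot\sup\{\mathrm{diam}(K_t^1\setminus K_s^1):t-s\leq\delta\}$, and the uniform continuity of $t\mapsto K_t^1$ in the Hausdorff sense (guaranteed by the Loewner family property together with $K_t^1\subseteq K^1$ compact) gives a modulus depending only on $A$. Once uniformity is in hand, Arzel\`a--Ascoli immediately yields that each family is precompact in $C([0,T],\mathbb{R})$. The substantive modification from the slit argument of \cite{RS14} is entirely absorbed into the prime-end machinery of Section \ref{background}, which was developed precisely so that statements about tips carry over verbatim to generalized tips of hulls.
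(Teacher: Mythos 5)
Your proposal follows essentially the same route as the paper: the paper's proof of Theorem \ref{thm:3.6.5} consists precisely of the uniform bound $g_A^-(A)\leq\lambda_K^j(t)\leq g_A^+(A)$ (which you obtain, slightly more sharply, from Lemma \ref{lem:3.6.8}(a)) together with the assertion that the Arzel\`a--Ascoli argument of \cite{RS14}, with equicontinuity supplied by Lemma \ref{lem:3.6.11} and the identity $\hcap(K_t^1\cup K_t^2)=2t$, carries over. The uniformity over parameterizations of $c$, $M$, and $\omega$ that you single out as the main obstacle is not addressed in the paper at all (it simply states that ``the rest of the proof in \cite{RS14} generalizes''), so your treatment is, if anything, more explicit on the one point where the generalization from slits to hulls is genuinely delicate.
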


The first step in proving this theorem in \cite{RS14} is to get a uniform bound (in time) on $\lambda_K^j(t)$ for $j=1,2$. This bound, in our case, is
\begin{equation}
    g_{A}^-(A)
    = g_T^-(A)
    \leq\lambda_K^j(t)
    \leq g_T^+(A)
    = g_{A}^+(A).
\end{equation}
The rest of the proof in \cite{RS14} generalizes.

\begin{thm:3.6.2}[1.1 \cite{RS14}]
Let $K^1,...,K^n$ be disjoint Loewner hulls. Let $\hcap(K^1\cup\cdots\cup K^n)=2T$. Then there exist constants $w_1,...,w_n\in(0,1)$ with $\sum_{k=1}^{n}w_k=1$ and continuous driving functions $\lambda_1,...,\lambda_n:[0,T]\to\mathbb{R}$ so that
\begin{equation}
    \partial_t g_t(z)=\sum_{k=1}^{n}\frac{2w_k}{g_t(z)-\lambda_k(t)},\quad g_0(z)=z
\end{equation}
satisfies $g_T=g_{K^1\cup\cdots\cup K^n}$.
\end{thm:3.6.2}

\noindent The proof of this theorem is the proof in \cite{RS14}, but we include it so that the reader can see where the previously proven lemmas are used.

\begin{proof}
Let $K^1,K^2$ be disjoint Loewner hulls, $\hcap(K^1\cup K^2)=2$, $c_j=\frac{1}{2}\hcap(K_j)$.

Define $\alpha_{n,w}:[0,1]\to\{0,1\}$ for $(n,w)\in\mathbb{N}\times[0,1]$ as follows:
\begin{equation}
    \alpha_{n,w}(t)=\left\{
    \begin{array}{rr}
    1\qquad&t\in(\frac{k}{2^n},\frac{k+w}{2^n})\\
    0\qquad&t\in(\frac{k+w}{2^n}\frac{k+1}{2^n})
    \end{array}\right.
\end{equation}
for $k\in\{0,...,2^n\}$. Let
\begin{equation}
    \partial_t g_{t,n}(z)
    =\frac{2\alpha_{n,w}(t)}{g_{t,n}(z)-\lambda_{1,n}(t)}
    +\frac{2(1-\alpha_{n,w}(t))}{g_{t,n}(z)-\lambda_{2,n}(t)},\qquad
    g_0(z)=z.
\end{equation}

By the construction of $\alpha_{n,w}$ only one hull grows at a time. So, the Loewner equation (with a single driving function) gives that $\lambda_{1,n}(t)$ is defined on $\bigcup_{k=0}^{2^n-1}(\frac{k}{2^n},\frac{k+w}{2^n})$ (similarly for $\lambda_{2,n}(t)$). The disjointness of the hulls gives that we can extend $\lambda_{j,n}$ to be the image of $\lambda_{j,n}(t)$ under the map corresponding to the other hull. So, $\lambda_{1,n}$ and $\lambda_{2,n}$ are continuous on $[0,1]$. For $t\in[0,1]$ the hull at time $t$ is
\begin{equation}
    H_{n,w,t}=K^1_{x_{n,w,t}}\cup K^2_{y_{n,w,t}}
\end{equation}
where $x_{n,w,t}\in[0,1]$ depends continuously on $w$. For all $n\in\mathbb{N}$, $x_{n,0,1}=0$ and $x_{n,1,1}=1$ (as $w=0$ and $w=1$ correspond to single hull growth of $K^1$ and $K^2$ respectively). By the Intermediate Value Theorem, for each $n\in\mathbb{N}$ there exists $w_n$ so that $x_{n,w_n,1}=c_1$. By Lemma \ref{lem:3.2.2} (b), $y_{n,w_n,1}=c_2$. So, $H_{n,w_n,1}=K^1\cup K^2$. Which means that $\alpha_{n,w_n}$ is a sequence of weights and $\lambda_{j,n}$ are sequences of continuous driving function generating $K^1\cup K^2$.

By Theorem \ref{thm:3.6.5}, there is a subsequence of $\lambda_{1,n}$ converging to a function $\lambda_1$. Using Theorem \ref{thm:3.6.5} again on the corresponding subsequence of $\lambda_{2,n}$ we get that there is a further subsequence converging to a function $\lambda_2$. Furthermore, the corresponding subsequence of $w_n$ has a convergent subsequence converging to $w\in[0,1]$. We will now reindex this sequence by $n\in\mathbb{N}$.

Let
\begin{equation}
    \partial_t g_t(z)
    =\frac{2w}{g_{t,n}(z)-\lambda_{1,n}(t)}
    +\frac{2(1-w)}{g_{t,n}(z)-\lambda_{1,n}(t)},\qquad
    g_0(z)=z.
\end{equation}
Then it is easy to see that $\alpha_{n,w_n}$ converges weakly to $w$ in $L^1([0,1])$ (similar to Lemma \ref{lem:onehalfweights}). Now, by Theorem \ref{thm:rs2.4rrodf}, we have the result.
\end{proof}

\bibliographystyle{alpha}
\bibliography{biblio}

\end{document}